\numberwithin{equation}{section}
\newtheorem{Theorem}{Theorem}[section]
\newtheorem{Lemma}{Lemma}[section]
\theoremstyle{definition}
\theoremstyle{remark}
\newtheorem{Remark}{Remark}[section]
\newtheorem{Proposition}{Proposition}[section]
\renewcommand{\r}{\rho}
\renewcommand{\t}{\theta}
\def\i{\varepsilon}
\renewcommand{\u}{{\bf u}}
\newcommand{\R}{{\mathbb R}}
\newcommand{\Dv}{{\rm div}}
\newcommand{\T}{{\mathcal T}}
\def\f{\frac}
\def\ov{\overline}
\def\D{\Delta }
\def\hf1{^\f{1}{1-\xi^2}}
\def\be{\begin{equation}}
\def\en{\end{equation}}
\def\bs{\begin{split}}
\def\es{\end{split}}
\author{Xianpeng HU and Dehua Wang}
\address{Courant Institute of Mathematical Sciences, New York University, New York, NY 10012.}
\email{xianpeng@cims.nyu.edu}
\address{Department of Mathematics, University of Pittsburgh,
                           Pittsburgh, PA 15260.}
\email{dwang@math.pitt.edu}
\title[Incompressible Magnetohydrodynamics Limit of VMB]
{Incompressible Magnetohydrodynamic limit of the
Vlasov-Maxwell-Boltzmann Equations}
\keywords{Vlasov-Maxwell-Boltzmann equations, Renormalized
solutions,  Hydrodynamic limit, Incompressible Electron-Magnetohydrodynamics-Fourier
system}
\subjclass{76P05, 82B40, 82C40.}
\date{\today}
\begin{document}

\begin{abstract}
The hydrodynamic limit of the Vlasov-Maxwell-Boltzmann equations is considered for weak solutions.
Using relative entropy estimate about an absolute Maxwellian, an
incompressible Electron-Magnetohydrodynamics-Fourier limit for
solutions of the Vlasov-Maxwell-Blotzmann equations over any
periodic spatial domain in $\R^3$ is studied. It is shown that
any properly scaled sequence of renormalized solutions of the
Vlasov-Maxwell-Boltzmann equations has fluctuations that (in the
weak $L^1$ topology) converge to an infinitesimal Maxwellian with
fluid variables that satisfy the incompressibility and Boussinesq
relations. It is also shown that the limits of the velocity, the electric
field, and the magnetic field are governed by a weak solution of
an incompressible electron-magnetohydrodynamics system for all
time.
\end{abstract}

\maketitle

\section{Introduction}
The hydrodynamic models such as the Euler or Navier-Stokes
equations were first established by applying Newton's second law
of motion to infinitesimal volume elements of the fluid under
consideration; while the kinetic equations are the mathematical models
used to describe the dilute particle gases at an intermediate
scale between microscopic and macroscopic level with applications
in a variety of sciences such as plasma, astrophysics, aerospace
engineering, nuclear engineering, particle-fluid interactions,
semiconductor technology, social sciences, and biology. If the
particles interact only through a repulsive conservative
interparticle force with finite range, then at low enough
densities this range will be much smaller than the interparticle
spacing. In that regime, 
the evolution of the density of particles $F=F(x,\xi,t)$ is governed by the classical
Vlasov-Maxwell-Boltzmann equaitons (VMB) \cite{RTG, YG, HW}:
\begin{subequations}\label{me1}
\begin{align}
&\f{\partial F}{\partial t}+\xi\cdot\nabla_x F+e(E+\xi\times
B)\cdot\nabla_\xi F=\mathcal{Q}(F, F),\quad x\in \R^3,\quad \xi\in
\R^3,\quad
t\ge 0,\\
&\f{1}{c^2}\f{\partial E}{\partial t}-\nabla\times B=-\mu_0
j,\quad \Dv B=0,\quad
\textrm{on}\quad \R^3_x\times(0,\infty),\\
&\f{\partial B}{\partial t}+\nabla\times E=0,\quad\Dv
E=\f{\r}{\eta_0},\quad
\textrm{on}\quad \R^3_x\times(0,\infty),\\
&\r=e\int_{\R^3}Fd\xi, \quad j=e\int_{\R^3}F\xi d\xi, \quad \textrm{on}\quad \R^3_x\times(0,\infty),
\end{align}
\end{subequations}
where the nonnegative function $F(t,x,\xi)$ is the density of
particles with velocity $\xi$  at time $t$ and position $x$  under the effect of
the Lorentz force $$E+\xi\times B,$$  $E$ is the electric field,
and $B$ is the magnetic field. The function $j$ is called the
current density, while the function $\r$ is the charge density.
The constant $e$ is the charge of the electron. The constant
$c$ is the speed of light. The coefficients
$\mu_0$ and $\eta_0$ are the magnetic permeability and the
electric permittivity of the plasma in the vacuum (see \cite{BS,MG}),   satisfying
$\mu_0\eta_0 c^2=1.$
The collison operator $\mathcal{Q}(F,F)$ is defined as
$$\mathcal{Q}(F,F)=\int_{\R^3}d\xi^*\int_{S^2}d\omega\,
b(\xi-\xi_*,\omega)(F'F'_*-FF_*),$$
where the nonnegative function $b(\xi, \omega)$ given for $\xi\in\R^3$ and $\omega\in S^2$ (the unit sphere in $\R^3$)  is called the collision kernel,  and
$$F_*=F(t,x,\xi_*),\quad F'=F(t, x,\xi'), \quad F'_*=F(t, x,\xi'_*), $$  with
$$\xi'=\xi-(\xi-\xi_*,\omega)\omega,$$
$$\xi_*'=\xi_*+(\xi-\xi_*,\omega)\omega,$$   yielding
one convenient parametrization of the set of solutions to the law
of elastic collisions:
\begin{equation}\label{ei}
\begin{cases}
\xi'+\xi'_*=\xi+\xi_*,\\
|\xi'|^2+|\xi'_*|^2=|\xi|^2+|\xi_*|^2.
\end{cases}
\end{equation}
The interpretation of $\xi$, $\xi_*$, $\xi'$, $\xi_*'$ is the
following: $\xi, \xi_*$ are the velocities of two colliding
molecules immediately before collision while $\xi', \xi'_*$ are
the velocities immediately after the collision.
We will consider the initial value problem of system \eqref{me1} with the initial condition:
\begin{equation}\label{BI}
(F, E, B)|_{t=0}=(F^0(x,\xi), E^0(x),
B^0(x))\quad\textrm{for}\quad x\in\R^3,\quad \xi\in\R^3.
\end{equation}

On the macroscopic level, the incompressible
Electron-Magnetohydrodynamics-Fourier equations describe the
evolution of the velocity field $\u=\u(t,x)$ of an idealized fluid
over a given spatial domain in $\R^3$ under the magnetic field
$B=B(t,x)$ and the electronic field $E=E(t,x)$, and take the form (cf. \cite{BMP})
\begin{subequations}\label{IM}
\begin{align}
&\partial_t\u+\u\cdot\nabla\u-\mu\D \u+\nabla
p-\alpha eE=(\nabla\times B)\times B,\label{IM1}\\
&\partial_t B+\nabla\times E=0, \quad j=\nabla\times B=e\u,\label{IM2}\\
&\partial_t\t+\u\cdot\t=\kappa \D\t,\quad \nabla_x(h+\t)=0,\label{IM4}\\
&\Dv \u=0, \qquad \Dv B=0,\label{IM6}
\end{align}
\end{subequations}
with
$$\alpha=\f{1}{3(2\pi)^{\f{3}{2}}}\int_{\R^3}|\xi|^2\exp\left(-\f{|\xi|^2}{2}\right)d\xi,$$
where $p, \t, E, h$ denote the pressure, temperature, electric field, and density respectively.
The initial value problem will also be considered for system \eqref{IM} with the initial data:
\begin{equation}\label{IMIN}
(\u, B, \t)|_{t=0}=(\u_0(x), B_0(x), \t_0(x)), \quad x\in\R^3,
\end{equation}
where
$$\u_0, B_0\in\{v\in L^2(\R^3):\Dv v=0\quad\textrm{in}\quad
\mathcal{D}'\}\quad\text{and}\quad \t_0\in L^2(\R^3).$$
We call $(\u, p, B, E, \t)$ a $\textit{weak
solution}$ to \eqref{IM}-\eqref{IMIN} if $(\u, p, B, E)$ is a Leray's
solution of the incompressible electron-magnetohydrodynamic
equation \eqref{IM1}-\eqref{IM2} under the constraints \eqref{IM6}
with initial data in \eqref{IMIN}, while $\t$ is a weak solution in
the sense of distributions to \eqref{IM4} with the initial data in \eqref{IMIN}.

The motivation of this paper is to find a scaling and
 verify mathematically the transition from the microscopic model
\eqref{me1} to the macroscopic model \eqref{IM} as some parameter
vanishes. One of the main objectives is to connect the
DiPerna-Lions theory of global renormalized solutions of the
Boltzmann equation with the Leray theory of global weak solutions of
the incompressible fluid equations in a periodic spatial domain
$\mathcal{T}=[0,1]^3\subset\R^3$.
More precisely,  we consider the hydrodynamic limit of the Vlasov-Maxwell-Boltzmann equations  for weak solutions in this paper.
Using relative entropy estimate about an absolute Maxwellian, an
incompressible Electron-Magnetohydrodynamics-Fourier limit for
solutions of the Vlasov-Maxwell-Blotzmann equations over
periodic spatial domains in $\R^3$ is studied. It is shown that
any properly scaled sequence of renormalized solutions of the
Vlasov-Maxwell-Boltzmann equations has fluctuations that (in the
weak $L^1$ topology) converge to an infinitesimal Maxwellian with
fluid variables that satisfy the incompressibility and Boussinesq
relations. It is also shown that the limits of the velocity, the electric
field, and the magnetic field are governed by a weak solution of
an incompressible electron-magnetohydrodynamics system for all
time.

We note that if the local conservation laws of
momentum and energy are assumed, the similar result for the
Navier-Stokes limit from the Boltzmann equation was verified in
\cite{BGL1, BGL}.  Later, without making any nonlinear weak
compactness hypothesis,  Golse and Saint-Raymond in their
breakthrough paper \cite{GL} established the incompressible
Navier-Stokes limit of Boltzmann equations with bounded kernels,
where they used the entropy dissipation rate to decompose the
collision operator in a new way and used a new $L^1$ averaging
theory to prove the compactness assumption. Recently, Levermore
and Masmoudi \cite{LM} extended those results to a much wider
class of collision kernels.
As the collision is neglected, the Vlasov-Maxwell-Boltzmann
equations become the Vlasov-Maxwell equations.  For
Vlasov-Maxwell equations, the mathematical verification from the
weak solution of the Vlasov-Maxwell equations to the incompressible
models has been conducted in \cite{BMP, BG, GL3, PS}. When the
solution is smooth, authors in \cite{BELM, JJ} considered the Hilbert expansion of solutions to the Vlasov-Maxwell-Boltzmann equations.
For other related results on the Boltzmann equation, see  \cite{FFM, GLD, GL2, MS, SL} and the references therein. Also for the large-time behavior, stability, and regularity of solutions to the Vlasov-Maxwell-Boltzmann equations, see \cite{Duan, DS1, GS1, HW} and the references therein.

This paper is organized as follows. In Section 2, we state the
formal scalings, the relative entropy, the technical assumptions,
and the main result. Section 3 is devoted to a list of \textit{a
priori} estimates on the fluctuations of the density from the
relative entropy. In Section 4, we consider the limit of the
Maxwell equations. Section 5 will concentrate on the vanishing of
conservation defects. And finally, in Section 6 we will give the detailed proof
of our main Theorem \ref{T1}.

\bigskip\bigskip

\section{Dimensionless Analysis, Preliminary, and Main Results}

In this section, we first introduce the scaling of \eqref{me1},
then deduce the relative entropy, and finally state the main
result.

To begin with, we now focus on the nondimensional form of the
Vlasov-Maxwell-Boltzmann equations. This form is motivated by the
fact that the incompressible Electron-Magnetohydrodynamics-Fourier
system \eqref{IM} can be formally derived from the
Vlasov-Maxwell-Boltzmann equations through a scaling, when the
density $F$ is close to a spatially homogeneous Maxwellian
$M=M(\xi)$ that has the same total mass, momentum, and energy as
the initial data. To this end, we introduce
$$t=t_*\hat{t},\qquad x=x_*\hat{x},\qquad \xi=\xi_*\hat{\xi},$$
$$F=\f{1}{\mu_0 \xi_*^3 x_*^2}\hat{F},\qquad E=\f{\xi_*}{t_*}\hat{E},\qquad B=\f{1}{t_*}\hat{B},$$
and
$$b=\f{x_*}{\eta_0\xi_*}\hat{b},$$
where the constants $t_*, x_*,\xi_*$ are the characteristic time,  characteristic distance, and characteristic  speed; see \cite{BS} for more physical interpretations of these constants. The non-relativistic effect
requires
$$\xi_*=\f{x_*}{t_*}, \quad\textrm{and}\quad
\i=\left(\f{\xi_*}{c}\right)^2\ll 1.$$ Substituting those new
variables back to \eqref{me1}, and dropping hats, we obtain

\begin{subequations}\label{me3111}
\begin{align}
&\f{\partial F}{\partial t}+\xi\cdot\nabla_x F+e(E+\xi\times
B)\cdot\nabla_\xi F=\f{1}{\i}\mathcal{Q}(F, F),\label{me31111}\\
&\i\f{\partial E}{\partial t}-\nabla\times B=-j,\label{me31112}\\
&\f{\partial B}{\partial t}+\nabla\times E=0,\label{me31113}\\
&\Dv B=0,\quad\quad\Dv E=\f{\r}{\i},\label{me31114}
\end{align}
\end{subequations}
where the coefficient $\i$ 
is usually refereed as the dimensionless mean free
path or Knudsen number.

Since the incompressible flow is the large-scale low-frequency
fluid-like behavior of a plasma system (\cite{BS, MG}), we need to
further scale the time to the order of $\i^{-1}$. For this
purpose, in the system \eqref{me3111}, we further introduce the
scaling as
\begin{equation*}
\begin{split}
&\tilde{t}=\i t,\qquad \tilde{x}=\i x,\qquad \tilde{\xi}=\i \xi,\\
&\tilde{F}=\f{1}{\i^5} F,\qquad\tilde{E}=\f{1}{\i}E,\qquad \tilde{B}=\f{1}{\i}B,\\
&\textrm{and}\\
&\tilde{b}=\i^2 b.
\end{split}
\end{equation*}
Then substituting the above scaling  back into \eqref{me3111},
and dropping tildes, we obtain

\begin{subequations}\label{me3}
\begin{align}
&\i\f{\partial F}{\partial t}+\xi\cdot\nabla_x F+e\i(\i
E+\xi\times
B)\cdot\nabla_\xi F=\f{1}{\i}\mathcal{Q}(F, F),\label{me31b}\\
&\i\f{\partial E}{\partial t}-\nabla\times B=-\f{j}{\i},\label{me32}\\
&\f{\partial B}{\partial t}+\nabla\times E=0,\label{me33}\\
&\Dv B=0,\quad\quad\Dv E=\f{\r}{\i},\label{me34}
\end{align}
\end{subequations}

The incompressible Electron-Magnetohydrodynamics-Fourier equations
will be obtained when F is close to the absolute Maxwellian
$M$\footnote{The absolute Maxwellian is given as
\begin{equation}\label{mm}
M(\xi)=\f{1}{(2\pi)^{\f{3}{2}}}\exp\left(-\f{1}{2}|\xi|^2\right)
\end{equation} and corresponds to the spatially homogeneous fluid state
with its density and temperature equal to $1$, bulk velocity equal
to $0$ and no effect from the electric field and the magnetic
field.} with order $\i$. Motivated by \cite{BGL, GL, LM}, we set
$F=MG$. Recasting the system \eqref{me3} for $G$ yields
\begin{subequations}\label{me31}
\begin{align}
&\i\f{\partial G}{\partial t}+\xi\cdot\nabla_x G+e\i(\i
E+\xi\times
B)\cdot\nabla_\xi G-e\i^2 E\cdot \xi G=\f{1}{\i}Q(G, G),\\
&\i\f{\partial E}{\partial t}-\nabla\times B=-\f{j}{\i},\\
&\f{\partial B}{\partial t}+\nabla\times E=0,\label{me313}\\
&\Dv B=0,\quad \quad \Dv E=\f{\r}{\i}.
\end{align}
\end{subequations}
where the collision operator is now given by
$$Q(G, G)=\int_{\R^3}\int_{S^2}(G'_*G'-G_*G)b(\xi_*-\xi,
\omega)d\omega M_*d\xi_*,$$ where $M_*=M(\xi_*)$.

\subsection{Relative Entropy}

For any pair of measurable functions $f$ and $g$ defined a.e. on
$\R^3\times \R^3$ and satisfying $f\ge 0$ and $g>0$ a.e., we use
the following notation for the relative entropy
\begin{equation}\label{re1}
H(f|g)=\int_{\T}\int_{\R^3}\left[f\ln\left(\f{f}{g}\right)-f+g\right]d\xi
dx\in [0,\infty],
\end{equation}
which is a way to measure how far $f$ is away from $g$. We are
interested in the evolution of
\begin{equation}\label{re2}
\mathcal{H}_\i(t)=\i H(F_\i|M)+\f{\i^3}{2}\int_{\T}(\i
|E_\i|^2+|B_\i|^2)dx,
\end{equation}
where $(F_\i, E_\i, B_\i)_{\{\i>0\}}$ are renormalized solutions
(see definition in Section 2.4) of Vlasov-Maxwell-Boltzmann
equations \eqref{me3}. This quantity contains the information from
the standard (rescaled) $L^2$ norm of the electro-magnetic field
and from the relative entropy between the renormalized solution
$F_\i(t,x,\xi)$ and the absolute Maxwellian $M$.

The following lemma is devoted to the study of the evolution of
the relative entropy, deduced from
\begin{equation}\label{re3}
\begin{split}
\f{d}{dt}\mathcal{H}_\i&=\i\int_{\T}\int_{\R^3}\partial_t F_\i(\ln
F_\i-\ln M)d\xi dx\\&\quad+\f{\i^3}{2}\f{d}{dt}\int_{\T}(\i
|E_\i|^2+|B_\i|^2)dx.
\end{split}
\end{equation}

\begin{Lemma}\label{re}
Let $(F_\i, E_\i, B_\i)$ be a renormalized solution (refer to the
definition in Section 2.2.1 below) to \eqref{me3}. Then
$\mathcal{H}_\i(t)$ satisfies the differential inequality:
\begin{equation}\label{re6}
\f{d}{dt}\mathcal{H}_\i(t)+\f{1}{4\i}\int_{\T}\int_{\R^3}\ln\left(\f{{F_\i}'_*{F_\i}'}{{F_\i}_*{F_\i}}\right)({F_\i}'_*{F_\i}'-{F_\i}_*{F_\i})d\xi
dx\le 0.
\end{equation}
\end{Lemma}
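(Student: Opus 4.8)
The plan is to differentiate $\mathcal{H}_\i$ along the flow of \eqref{me3}, substitute the equations, and show that every term other than the collisional entropy production is either a spatial divergence (hence integrates to zero over the periodic box $\T$) or cancels exactly against a term generated by the electro-magnetic energy $\f{\i^3}{2}\int_\T(\i|E_\i|^2+|B_\i|^2)\,dx$. Starting from \eqref{re3} and using \eqref{me31b} to replace $\i\,\partial_t F_\i$, the relative-entropy part breaks into a transport term $-\int_\T\int_{\R^3}(\xi\cdot\nabla_x F_\i)(\ln F_\i-\ln M)\,d\xi\,dx$, a Lorentz-force term $-e\i\int_\T\int_{\R^3}\big((\i E_\i+\xi\times B_\i)\cdot\nabla_\xi F_\i\big)(\ln F_\i-\ln M)\,d\xi\,dx$, and a collision term $\f1\i\int_\T\int_{\R^3}\mathcal{Q}(F_\i,F_\i)(\ln F_\i-\ln M)\,d\xi\,dx$.

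For the transport term I would use that $M$ is $x$-independent to write $(\xi\cdot\nabla_x F_\i)(\ln F_\i-\ln M)=\nabla_x\cdot\big(\xi\,(F_\i\ln F_\i-F_\i-F_\i\ln M)\big)$, which integrates to zero on $\T$. For the Lorentz-force term, $\nabla_\xi\ln M=-\xi$ gives $(\nabla_\xi F_\i)(\ln F_\i-\ln M)=\nabla_\xi\big(F_\i\ln F_\i-F_\i-F_\i\ln M\big)-\xi F_\i$; since the force field is divergence-free in $\xi$ (the fields do not depend on $\xi$ and $\nabla_\xi\cdot(\xi\times B_\i)=0$), integration by parts in $\xi$ annihilates the first piece, while $(\xi\times B_\i)\cdot\xi=0$ leaves only $e\i^2\int_\T E_\i\cdot\big(\int_{\R^3}\xi F_\i\,d\xi\big)\,dx=\i^2\int_\T E_\i\cdot j_\i\,dx$, with $j_\i=e\int_{\R^3}\xi F_\i\,d\xi$ the current density. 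On the other hand, testing \eqref{me32} against $\i^3E_\i$ and \eqref{me33} against $\i^3B_\i$, adding, and using $\nabla_x\cdot(E_\i\times B_\i)=B_\i\cdot(\nabla_x\times E_\i)-E_\i\cdot(\nabla_x\times B_\i)$ on the periodic box yields $\f{\i^3}{2}\f{d}{dt}\int_\T(\i|E_\i|^2+|B_\i|^2)\,dx=-\i^2\int_\T E_\i\cdot j_\i\,dx$, which exactly cancels the surviving Lorentz-force contribution.

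For the collision term, the standard pre-/post-collisional symmetrization gives $\int_{\R^3}\mathcal{Q}(F_\i,F_\i)\,\varphi\,d\xi=\f14\int\!\!\int\!\!\int b\,\big({F_\i}'{F_\i}'_*-F_\i{F_\i}_*\big)\big(\varphi+\varphi_*-\varphi'-\varphi'_*\big)\,d\omega\,d\xi_*\,d\xi$; for $\varphi=\ln M=-\f32\ln(2\pi)-\f12|\xi|^2$ the bracket $\varphi+\varphi_*-\varphi'-\varphi'_*$ vanishes by the collision invariants \eqref{ei}, so only $\varphi=\ln F_\i$ survives and the collision term equals $-\f1{4\i}\int_\T\int_{\R^3}\ln\big(\tfrac{{F_\i}'_*{F_\i}'}{{F_\i}_*{F_\i}}\big)\big({F_\i}'_*{F_\i}'-{F_\i}_*{F_\i}\big)\,d\xi\,dx$, with $b\,d\omega\,d\xi_*$ absorbed into the measure. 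Collecting the three contributions, $\f{d}{dt}\mathcal{H}_\i$ equals minus this quantity, and since $(a-b)\ln(a/b)\ge0$ for $a,b>0$ the integrand is nonnegative; this is \eqref{re6}.

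The main obstacle is that $(F_\i,E_\i,B_\i)$ is only a renormalized solution, so none of the above steps — differentiating $H(F_\i|M)$, multiplying \eqref{me31b} by the unbounded weight $\ln F_\i-\ln M$, integrating by parts in $\xi$ against the merely $L^2$ fields $E_\i,B_\i$, or forming the moment $\int_{\R^3}\xi F_\i\,d\xi$ — is literally justified. The rigorous route is the DiPerna--Lions scheme: apply the renormalized formulation with admissible nonlinearities $\Gamma_\delta(F)=\tfrac1\delta\ln(1+\delta F)$ (or $\tfrac{F}{1+\delta F}$), carry out the computation above for the truncated quantities, where all integrals and integrations by parts converge, control the velocity moments and the force contributions using the \emph{a priori} entropy and energy bounds that come with the existence theory for \eqref{me3}, and then let $\delta\to0$, using monotone/dominated convergence on the entropy pieces and weak-$L^1$ lower semicontinuity on the nonnegative entropy-production term; this delivers \eqref{re6} in its integrated form (with $\mathcal{H}_\i(s)$ in place of $\mathcal{H}_\i(0)$ for a.e.\ $s\le t$), the differential inequality being the customary shorthand. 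The one genuinely new point beyond the pure Boltzmann case is verifying that the Vlasov force term integrates by parts cleanly and that its residue $\i^2\int_\T E_\i\cdot j_\i\,dx$ matches, down to the constant, the Poynting-type term produced by the Maxwell energy, so that the \emph{a priori} uncontrolled coupling between the density fluctuation and the electro-magnetic field cancels out of the entropy balance.
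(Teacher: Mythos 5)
Your proposal is correct and follows essentially the same route as the paper: a formal computation for smooth solutions in which the transport term vanishes on the torus, the collision term against $\ln M$ vanishes by the collision invariants, the Lorentz-force term leaves the residue $e\i^2\int_\T\int_{\R^3}E_\i\cdot\xi F_\i\,d\xi\,dx$ which cancels exactly the Poynting-type term from the Maxwell energy, and the surviving collisional term is the nonnegative entropy production; the rigorous passage for renormalized solutions is deferred (the paper to \cite{HW} and lower semicontinuity, you to the DiPerna--Lions truncation scheme). No gaps.
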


\begin{proof}
In view of \cite{HW}, the inequality will follow from the lower
semi-continuity of the weak convergence and an equality version
when solutions are smooth. Thus, we will assume that those
solutions are smooth. Observing that
$$\partial_t F_\i\ln F_\i=\partial_t(F_\i\ln F_\i)-\partial_t
F_\i,$$ we obtain
\begin{equation*}
\begin{split}
\i\int_{\T}\int_{\R^3}\partial_t F_\i\ln F_\i d\xi
dx=-\f{1}{4\i}\int_{\T}\int_{\R^3}\ln\left(\f{{F_\i}'_*{F_\i}'}{{F_\i}_*{F_\i}}\right)({F_\i}'_*{F_\i}'-{F_\i}_*{F_\i})d\xi
dx,
\end{split}
\end{equation*}
and, by \eqref{me3}
\begin{equation*}
\begin{split}
\i\int_{\T}\int_{\R^3}\partial_t F_\i\ln M d\xi
dx=-\i^2e\int_{\T}\int_{\R^3}F_\i E_\i\cdot\xi d\xi dx.
\end{split}
\end{equation*}
Here, we used the following identity twice (see \cite{GL})
$$\int_{\T}Q(f,f)\zeta(\xi)d\xi=\f{1}{4}\int_{\T}\int_{\R^3}d\xi d\xi_*\int_{S^2}d\omega
B(f'f'_*-ff_*)[\zeta+\zeta_*-\zeta'-\zeta'_*].$$
 Hence,
\begin{equation}\label{re4}
\begin{split}
\i\int_{\T}\int_{\R^3}\partial_t F_\i(\ln F_\i-\ln M) d\xi
dx&=-\f{1}{4\i}\int_{\T}\int_{\R^3}\ln\left(\f{{F_\i}'_*{F_\i}'}{{F_\i}_*{F_\i}}\right)({F_\i}'_*{F_\i}'-{F_\i}_*{F_\i})d\xi
dx\\&\quad+e\i^2\int_{\T}\int_{\R^3}F_\i E_\i\cdot\xi d\xi dx.
\end{split}
\end{equation}

On the other hand, multiplying equation \eqref{me32} by $E_\i$,
equation \eqref{me33} by $B_\i$, integrating them in $x$ over
$\R^3$ and then summing them together, we obtain,
\begin{equation}\label{re5}
\f{d}{dt}\int_{\T}(\i|E_\i|^2+|B_\i|^2)dx=-\f{2}{\i}\int_{\T}E_\i\cdot
j_\i dx=-e\f{2}{\i}\int_{\T}\int_{\R^3}E_\i\cdot \xi F_\i d\xi dx.
\end{equation}

Substituting \eqref{re5} back into \eqref{re4}, we obtain
\begin{equation*}
\begin{split}
\i\int_{\T}\int_{\R^3}\partial_t F_\i(\ln F_\i-\ln M) d\xi
dx&=-\f{1}{4\i}\int_{\T}\int_{\R^3}\ln\left(\f{{F_\i}'_*{F_\i}'}{{F_\i}_*{F_\i}}\right)({F_\i}'_*{F_\i}'-{F_\i}_*{F_\i})d\xi
dx\\&\quad-\f{\i^3}{2}\f{d}{dt}\int_{\T}(\i|E_\i|^2+|B_\i|^2)dx,
\end{split}
\end{equation*}
which is exactly an equality version of \eqref{re6}.
\end{proof}

{\bf Notations.} In order to avoid unnecessary constants in the
sequel, we will assume that the nondimensionalization has the
following normalizations:
$$\int_{S^2} d\omega=1,\qquad \int_{\R^3} Md\xi=1,$$
associated with the domain $S^2$, and $\R^3$ respectively;
\begin{equation*}
\int_{\T}\int_{\R^3}G^0M d\xi dx=1,\qquad \int_{\T}\int_{\R^3}\xi
G^0 M d\xi dx=0,
\end{equation*}
$$\int_{\T}\int_{\R^3}\f{1}{2}|\xi|^2G^0M d\xi dx=\f{3}{2},$$
associated with the initial data; and
$$\int_{\R^3}\int_{\R^3}\int_{S^2}b(\xi_*-\xi, \omega)d\omega
M_*d\xi_* M d\xi=1,$$ associated with the Boltzmann kernel.

Since $Md\xi$ is a positive unit measure on $\R^3$, we denote by
$<\eta>$ the average over this measure of any integrable function
$\eta=\eta(\xi)$,
$$<\eta>=\int_{\R^3}\eta Md\xi.$$
Since $$d\mathcal{M}=b(\xi_*-\xi, \omega)d\omega M_*d\xi_* Md\xi$$
is a non-negative unit measure on $\R^3\times\R^3\times S^2$, we
denote by $\ll\tau\gg$ the average over this measure of any
integrable function $\tau=\tau(\xi, \xi_*, \omega)$,
$$\ll\tau\gg=\int_{\R^3}\tau d\mathcal{M}.$$ The collision measure
$d\mathcal{M}$ is invariant under the transformations
$$(\omega, \xi_*, \xi)\rightarrow (\omega, \xi, \xi_*),\quad (\omega, \xi_*, \xi)\rightarrow (\omega, \xi',
\xi'_*),$$ which are called collisional symmetries (cf. \cite{BGL,
GL}).

Now, we can explain Lemma \ref{re} in terms of $G_\i$ as follows:
\begin{equation}\label{reg}
\begin{split}
&\i\f{d}{dt}\int_{\T}\left< G_\i\ln G_\i-G_\i+1\right>
dx+\f{\i^3}{2}\f{d}{dt}\int_{\T}(\i
|E_\i|^2+|B_\i|^2)dx\\&\quad+\f{1}{4\i}\int_{\T}\left\langle\!\!\!\left\langle
\ln\left(\f{{G_\i}'_*{G_\i}'}
{{G_\i}_*{G_\i}}\right)({G_\i}'_*{G_\i}'-{G_\i}_*{G_\i})\right\rangle\!\!\!\right\rangle
dx\le 0.
\end{split}
\end{equation}
If $G_\i$ solves the VMB equations \eqref{me31}, then inequality
\eqref{reg} implies
\begin{equation}\label{m13}
\mathcal{H}_\i(t)+\f{1}{\i}\int_0^t\mathcal{R}(G_\i(s))ds=\mathcal{H}_\i(0),
\end{equation}
where $\mathcal{H}_\i(t)$ is the entropy functional
\begin{equation}\label{m14}
\mathcal{H}_\i(t)=\i\int_{\R^3}<G_\i\ln G_\i-G_\i+1>dx+\f{\i^3}{2}
\int_{\R^3}(\i|E_\i|^2+|B_\i|^2)dx,
\end{equation}
and $R(G)$ is the entropy dissipation rate functional
\begin{equation}\label{m15}
\mathcal{R}(G)=\int_{\R^3}\f{1}{4}\left\langle\!\!\!\left\langle
\ln\left(\f{{G_\i}'_*{G_\i}'}
{{G_\i}_*{G_\i}}\right)({G_\i}'_*{G_\i}'-{G_\i}_*{G_\i})\right\rangle\!\!\!\right\rangle
dx.
\end{equation}

This choice of $\mathcal{H}_\i$ as the entropy functional
\eqref{m14} is based on the fact that its integrand is a
non-negative strictly convex function of $G$ with a minimum value
of zero at $G=1$. Indeed for any $G$,
\begin{equation}\label{m16}
H(G)\ge 0,\qquad\textrm{and}\qquad H(G)=0\qquad\textrm{if and only
if}\quad G=1.
\end{equation}
Here $H(G)$ is called the relative entropy with respect to the
absolute equilibrium $G=1$ which provides a natural measure of the
proximity of $G$ to that equilibrium.

We can expect that, the terms involving the entropy
$\mathcal{H}_\i$ measure the proximity of $G_\i$ and $G_\i^0$ to
the absolute equilibrium value of $1$. On the other hand, the
terms involving the dissipation rate $\mathcal{R}$, can be
understood to measure the proximity of $G_\i$ to any Maxwellian
through their characterization.

\subsection{Global Solutions}
In order to mathematically justify the incompressible
Electron-Magnetohydrodynamics-Fourier limit of the
Vlasov-Maxwell-Boltzmann equations, we must make precise:
\begin{itemize}
\item the notion of solutions for the Vlasov-Maxwell-Boltzmann
equations;
\item the notion of solutions for the incompressible
Electron-Magnetohydrodynamics-Fourier system \eqref{IM}.
\end{itemize}
Ideally, these solutions should be global while the bounds are
physically natural. We therefore work in the setting of
DiPerna-Lions renormalized solutions for the
Vlasov-Maxwell-Boltzmann equations, and in the setting of Leray
solutions for the incompressible
Electron-Magnetohydrodynamics-Fourier system. These theories have
the virtues of considering physically natural classes of initial
data.

\subsubsection{Renormalized solutions to the Vlasov-Maxwell-Boltzmann equations}
In the spirit of the DiPerna-Lions theory for the Boltzmann
equation and the idea in Hu-Wang \cite{HW}, modified slightly for the
periodic box, it is possible to show the weak stability of global
weak solutions to a whole class of formally equivalent
initial-value problems. More precisely, let $G_\i\ge 0$ be a
sequence of DiPerna-Lions renormalized solutions to the scaled
Vlasov-Maxwell-Boltzmann initial-value problem \eqref{me31}
with
$$G_\i(0,x,\xi)=G^0_\i(x,\xi)\ge 0, \quad E_\i(0, x)=E^0_\i(x),\quad B_\i(0,x)=B^0_\i(x).$$

A \textit{Renormalized Solution Relative to $M$} of \eqref{me3} is
a triplet $(F_\i, E_\i, B_\i)$ such that  $$F_\i\in C(\R_+;
L^1_{loc}(R^3; L^1(\R^3))),\qquad E_\i, \; B_\i\in C_w(\R_+; L^2(\R^3)), $$
and satisfies
\begin{equation}\label{m291}
\Gamma'\left(\f{F_\i}{M}\right)\mathcal{Q}(F_\i, F_\i)\in
L^1_{loc}(\R_+\times\R^3\times\T)
\end{equation}
for all $\Gamma\in C^1(\R_+)$ such that
\begin{equation}\label{m292}
\Gamma(0)=0,\quad\textrm{and}\quad z\mapsto
(1+z)\Gamma'(z)\quad\textrm{is bounded on}\quad \R_+,
\end{equation}
has finite relative entropy for all positive time:
\begin{equation}\label{m293}
\mathcal{H}_\i(t)+\f{1}{\i}\int_0^t\mathcal{R}(G(s))ds\le
\mathcal{H}_\i(0),
\end{equation}
and finally satisfies
\begin{equation}\label{m294}
\begin{split}
&\int_0^\infty\int_\T\int_{\R^3}\Gamma\left(\f{F_\i}{M}\right)\left(\partial_t\chi+\f{1}{\i}\xi\cdot\nabla_x\chi\right)Md\xi
dxdt\\
&\qquad+e\i\int_0^\infty\int_\T\int_{\R^3}\Gamma\left(\f{F_\i}{M}\right)\left(\i
E_\i+\xi\times
B_\i\right)\cdot\nabla_\xi\chi Md\xi dxdt\\
&\quad\quad-e\i\int_0^\infty\int_\T\int_{\R^3}\Gamma\left(\f{F_\i}{M}\right)\i
E_\i\cdot\xi
\chi Md\xi dxdt\\
&\qquad+e\i\int_0^\infty\int_\T\int_{\R^3}\Gamma'\left(\f{F_\i}{M}\right)\i
E_\i\cdot\xi F_\i \chi d\xi dxdt\\
&\qquad+\f{1}{\i^2}\int_0^\infty\int_\T\int_{\R^3}\Gamma'\left(\f{F_\i}{M}\right)\mathcal{Q}(F_\i,
F_\i)\chi Md\xi dxdt\\
&=0
\end{split}
\end{equation}
for each test function $\chi\in
C_0^\infty((0,\infty)\times\R^3\times\R^3)$.

Throughout the rest of this paper, we assume that the initial data
$G^0_\i$ satisfies the normalizations and the entropy bound
\begin{equation}\label{m34}
\mathcal{H}_\i(0)\le C\i^3,
\end{equation}
for some fixed $C>0$.

\subsubsection{Weak formulation of the limiting system \eqref{IM}} Inspired by \cite{ST}, for the
limiting system \eqref{IM} with mean zero initial data, the Leray
theory is set in the following Hilbert spaces of vector- and
scalar-valued functions:
\begin{align*}
&\mathbf{H}_v=\left\{w\in L^2(dx;\R^3):\quad \Dv
w=0,\quad\int_{\T}w
dx=0\right\},\\
&\mathbf{H}_s=\left\{\chi\in L^2(dx;\R):\quad\int_{\T}\chi
dx=0\right\}, \\
&\mathbf{V}_v=\left\{w\in\mathbf{H}_v:\quad \int_\T |\nabla
w|^2dx<\infty\right\},\\
&\mathbf{V}_s=\left\{\chi\in\mathbf{H}_s:\quad
\int_\T|\nabla\chi|dx<\infty\right\}.
\end{align*}

Let
$\mathbf{H}=\mathbf{H}_v\oplus\mathbf{H}_v\oplus\mathbf{H}_s$
and
$\mathbf{V}=\mathbf{V}_v\oplus\mathbf{V}_v\oplus\mathbf{V}_s$.
Leray's theory yields: given any $(\u_0, B_0, \theta_0)\in
\mathbf{H}$, there exists a $(\u, B, \theta)\in C([0,\infty);
w-\mathbf{H})\cap L^2_{loc}(0,\infty; \mathbf{V})$ which equals
initially $(\u_0, B_0, \theta_0)\in \mathbf{H}$ and satisfies the
incompressible system \eqref{IM} in the sense that,  for all
$(\phi, \psi, \chi)\in\mathbf{H}\cap C^1(\T)$,
\[
\begin{split}
&\int_\T \phi\cdot\u(t)
dx-\int_{\T}\phi\cdot\u(s)dx-\int_s^t\int_{\T}\nabla_x\phi:(\u\otimes\u)dxd\tau\\
&\quad=-\mu\int_s^t\int_{\T}\nabla_x\phi:\nabla_x\u dxd\tau-\alpha
e \int_s^t\int_{\T}E\phi dxd\tau
+\int_s^t\int_{\T}\nabla_x\phi:(B\otimes B)dxd\tau;\\
&\int_\T \psi\cdot B(t)dx-\int_{\T} \psi\cdot
B(s)dx+\int_s^t\int_{\T}E\cdot(\nabla_x\times
\psi)dxd\tau=0;\\
&\int_{\T}\chi\theta(t)dx-\int_{\T}\chi\theta(s)dx-\int_s^t\int_{\T}\nabla_x\chi\cdot(\u\theta)
dxd\tau\\
&\quad=-\kappa\int_s^t\int_{\T}\nabla_x\chi\cdot\nabla_x\theta
dxd\tau,
\end{split}
\]
for every $0\le s<t$. Moreover, $(\u, B, \theta)$ satisfies the
dissipation inequalities
\begin{subequations}\label{m295}
\begin{align}
&\int_\T\f{1}{2}\left(|\u(t)|^2+\alpha|B(t)|^2\right)+\int_0^t\mu|\nabla_x\u|^2dxds\le
\int_\T\f{1}{2}\left(|\u_0|^2+\alpha|B_0|^2\right)dx, \label{m2951}\\
&\int_\T\f{1}{2}|\theta(t)|^2dx+\int_0^t\int_\T\kappa|\nabla_x\theta|^2dxds\le
\int_\T\f{1}{2}|\theta_0|^2dx, \label{m2952}
\end{align}
\end{subequations}
for every $t>0$.

A global existence theory, similar to Leray's theory of
incompressible Navier-Stokes equations, can be established via
Garlerkin's method, the dissipation inequalities \eqref{m295} and
Ohm's law which expresses the electric field $E$ in terms of the
magnetic field and the velocity as, see \cite{BS, MG}
$$j=\sigma(E+\u\times B),$$ where $\sigma>0$ is the electrical conductivity. To obtain the dissipation inequality
\eqref{m2951}, we first multiply \eqref{IM1} by $\u$ to obtain,
using \eqref{IM2},
\begin{equation}\label{IME1}
\f{1}{2}\f{d}{dt}\|\u\|_{L^2(\T)}^2+\mu\|\nabla\u\|_{L^2(\T)}^2-\alpha
\int_\T E\cdot (\nabla\times B) dx=0.
\end{equation}
Here, we used the identity
$$B\times(\nabla\times B)\cdot\u=\f{1}{e}(B\times(\nabla\times
B))\cdot j=\f{1}{e}(B\times(\nabla\times B))\cdot (\nabla\times
B)=0,$$ according to \eqref{IM2}. Then, we multiply \eqref{IM2} by
$\alpha B$ to obtain
\begin{equation}\label{IME2}
\f{\alpha}{2}\f{d}{dt}\|B\|_{L^2(\T)}^2+\alpha\int_\T
E\cdot(\nabla\times B)=0.
\end{equation}
Adding \eqref{IME1} and \eqref{IME2}, and then integrating it over
$(0,T)$ yield the energy inequality \eqref{m2951}.

In summary, we have the following existence theory for the
incompressible system \eqref{IM}.
\begin{Proposition} For each $\u_0, B_0\in\{f\in L^2(\R^3):\Dv f=0\quad\textrm{in}\quad
\mathcal{D}'\}$ and $\t_0\in L^2(\R^3)$, there exists at least one
weak solution $(\u, B, \t)$ of \eqref{IM}-\eqref{IMIN} that
satisfies the energy inequality
\begin{equation*}
\begin{split}
&\f{1}{2}\int_{\mathcal{T}}\left(|\u(t,x)|^2+\alpha
|B(t,x)|^2+\f{5}{2}|\t(t,x)|^2\right)dx
+\int_0^t\int_{\mathcal{T}}\left(\mu|\nabla\u|^2+\f{5}{2}\kappa|\nabla\t|^2\right)dxds\\
&\quad\le \f{1}{2}\int_{\mathcal{T}}\left(|\u_0|^2+\alpha
|B_0|^2+\f{5}{2}|\t_0|^2\right)dx
\end{split}
\end{equation*}
for all $t>0$.
\end{Proposition}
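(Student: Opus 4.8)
The plan is to build the weak solution by the classical Galerkin method in the Hilbert spaces $\mathbf{H}$, $\mathbf{V}$ introduced above, to propagate at the discrete level the energy identity behind \eqref{IME1}--\eqref{IME2}, and then to pass to the limit using weak compactness; the stated energy inequality will follow by weak lower semicontinuity. Since in \eqref{IM} the temperature $\t$ satisfies only the linear advection--diffusion equation \eqref{IM4} with divergence-free drift $\u$, it can be handled separately: once $(\u,p,B,E)$ is built, \eqref{IM4}--\eqref{IMIN} is a linear parabolic problem whose weak solution $\t\in C([0,\infty);w\text{-}\mathbf{H}_s)\cap L^2_{loc}(0,\infty;\mathbf{V}_s)$ exists by standard theory and obeys $\f12\|\t(t)\|_{L^2(\T)}^2+\kappa\int_0^t\|\na\t\|_{L^2(\T)}^2\,ds=\f12\|\t_0\|_{L^2(\T)}^2$. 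So I focus on $(\u,B,E)$. Fix $T>0$, and let $\{w_k\}$ be the orthonormal basis of $\mathbf{H}_v$ made of eigenfunctions of $-\D$ on $\T$ restricted to mean-zero divergence-free fields (which may be taken to be eigenfunctions of the curl, so that the constraint $\na\times B=e\u$ of \eqref{IM2} is compatible with the finite-dimensional subspaces). Using Ohm's law $j=\sigma(E+\u\times B)$ with $j=e\u=\na\times B$ to express $E_n=\f{e}{\sigma}\u_n-\u_n\times B_n$, I test \eqref{IM1}--\eqref{IM2} against $w_1,\dots,w_n$ to obtain a finite-dimensional ODE system for $(\u_n,B_n)$ (with $B_n$ tied to $\u_n$ by $\na\times B_n=e\u_n$), which by Cauchy--Lipschitz has a unique local-in-time solution.

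The a priori bound comes from reproducing the cancellations of \eqref{IME1}--\eqref{IME2} at the discrete level: testing the projected \eqref{IM1} with $\u_n$ and the projected \eqref{IM2} with $\alpha B_n$ and adding, the convective term drops because $\Dv\u_n=0$, the Lorentz force does no work since $B_n\times(\na\times B_n)\cdot\u_n=\f1e(B_n\times(\na\times B_n))\cdot(\na\times B_n)=0$ by the constraint, and the electric contributions $-\alpha e\int_\T E_n\cdot\u_n\,dx$ and $\alpha\int_\T E_n\cdot(\na\times B_n)\,dx$ cancel, giving the discrete identity
\begin{equation*}
\f12\f{d}{dt}\int_\T\bigl(|\u_n|^2+\alpha|B_n|^2\bigr)\,dx+\mu\int_\T|\na\u_n|^2\,dx=0 .
\end{equation*}
Hence $\u_n$ is bounded in $L^\infty(0,T;\mathbf{H}_v)\cap L^2(0,T;\mathbf{V}_v)$ and $B_n$ in $L^\infty(0,T;\mathbf{H}_v)$, uniformly in $n$, so the ODE solutions are global; the parallel computation for \eqref{IM4} against $\f52\t_n$ (using $\Dv\u_n=0$) gives the $\f52$-weighted bound for $\t_n$ if one prefers to keep $\t$ inside the same scheme.

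From these bounds and the equations one controls $\del_t\u_n$ in $L^p(0,T;\mathbf{V}_v')$ for some $p>1$, so the Aubin--Lions--Simon lemma yields, along a subsequence, $\u_n\to\u$ strongly in $L^2((0,T)\times\T)$ --- hence, interpolating with the $L^2H^1$ bound, in $L^2(0,T;L^r(\T))$ for every $r<6$ --- while $\na\u_n\rightharpoonup\na\u$ in $L^2$ and $B_n\overset{*}{\rightharpoonup}B$ in $L^\infty(0,T;\mathbf{H}_v)$. I then pass to the limit term by term: $\u_n\otimes\u_n\to\u\otimes\u$ strongly in $L^1$; $E_n=\f{e}{\sigma}\u_n-\u_n\times B_n\rightharpoonup\f{e}{\sigma}\u-\u\times B$; and, rewriting the Lorentz force on the constraint set as $(\na\times B_n)\times B_n=e\,\u_n\times B_n$, it converges to $e\,\u\times B$. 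The key point is that, thanks to $\na\times B=e\u$, both the Lorentz term and $E$ are \emph{linear} in $B$, so the product of the strongly convergent $\u_n$ with the merely weakly convergent $B_n$ passes to the limit; the relations $\Dv\u=\Dv B=0$, $\na\times B=e\u$, and Faraday's law pass by weak continuity of linear differential operators, and a density argument extends the limit identities to all test functions in $\mathbf{H}\cap C^1(\T)$. The stated energy inequality then follows from the discrete identity above by weak/weak-$*$ lower semicontinuity of the $L^\infty(0,T;L^2(\T))$-norms and of the $L^2((0,T)\times\T)$-norm of $\na\u_n$, together with the $L^2$-convergence of the Galerkin-projected data to $(\u_0,B_0)$, after adding the $\t$-identity weighted by $\f52$; continuity in time in the weak topology of $\mathbf{H}$ and attainment of the initial data are obtained by the usual Lions--Magenes argument.

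The one genuinely delicate point is the interplay between the constraint $j=\na\times B=e\u$ and the time evolution: one must set up the approximation so that this algebraic relation --- exactly what makes the Lorentz force do no work and the electric terms cancel in \eqref{IME1}--\eqref{IME2}, and without which the energy estimate does not close --- holds at the discrete level; equivalently, $B$ may be regarded as slaved to $\u$ through $B=e(-\D)^{-1}(\na\times\u)$ on mean-zero divergence-free fields, and one checks the consistency of \eqref{IM1} with Faraday's law. Everything else is routine: in particular the absence of magnetic dissipation (no $L^2H^1$ bound on $B_n$) is harmless precisely because, on the constraint set, every quantity in which $B$ appears quadratically can be re-expressed so as to involve a single factor of $B$ together with a factor of the strongly precompact sequence $\u_n$.
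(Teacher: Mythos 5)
Your argument is essentially the paper's: the proof in Section 2.2.2 consists precisely of the formal energy identity \eqref{IME1}--\eqref{IME2} --- the Lorentz force does no work and the two electric terms cancel because of the constraint $j=\nabla\times B=e\u$ --- plus a one-line assertion that existence follows by Galerkin's method and Ohm's law; you reproduce that identity verbatim and supply the Galerkin/Aubin--Lions details the paper omits. One concrete caveat on the only nonstandard point, which you yourself flag as delicate: prescribing $E_n$ by Ohm's law while \emph{simultaneously} imposing the projected momentum equation, the projected Faraday law, and the slaving $\nabla\times B_n=e\u_n$ gives $2n$ scalar ODEs for the $n$ coefficients of $\u_n$ and is over-determined; as literally written your discrete system generically has no solution. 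The consistent closure (and the one the weak formulation supports) is to read \eqref{IM1} as \emph{defining} the solenoidal part of $\alpha eE$; Faraday's law is then exactly the curl of \eqref{IM1}, because $\nabla\times\u=-\frac{1}{e}\Delta B$, and the object to discretize is the reduced equation $(I+\alpha e^{2}(-\Delta)^{-1})\partial_t\u+P(\u\cdot\nabla\u)-\mu\Delta\u=P\bigl((\nabla\times B)\times B\bigr)$ with $B=e(-\Delta)^{-1}\nabla\times\u$. Since $\alpha e^{2}\langle(-\Delta)^{-1}\u_n,\u_n\rangle=\alpha\|B_n\|_{L^2}^2$ on mean-zero divergence-free fields, testing with $\u_n$ reproduces exactly your discrete identity, and the rest of your compactness and lower-semicontinuity argument --- which never uses the specific form of $E_n$, only the constraint --- goes through unchanged; the paper itself is no more precise than you are on this closure issue.
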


\subsection{Assumptions}

In this subsection, we state our technical assumptions. To begin
with, we define
$$A(\xi)=\int_{S^2}b(\xi,\omega)d\omega.$$
Our assumptions regarding the collision kernel $b$ are stated as
follows:
\begin{itemize}
\item {\bf (H0)} $b\in L^1(B_R\times S^2)\quad \textrm{for all}\quad R\in
(0,\infty)$, where $B_R=\{z\in \R^3: |z|<R\}$, and
\begin{equation*}
\begin{cases}
b(z,w)\quad \textrm{depends only on}\quad |z|\quad
\textrm{and}\quad |(z,\omega)|,\\
(1+|z|^2)^{-1}\left(\int_{z+B_R}A(\xi)d\xi\right)\rightarrow
0,\quad \textrm{as}\quad |z|\rightarrow\infty,\quad \textrm{for
all}\quad R\in(0,\infty).
\end{cases}
\end{equation*}
\item {\bf (H1)} $\f{1}{b_\infty}\le b(z,\omega)\le b_\infty,\quad
z\in\R^3,\quad \omega\in S^2,\quad\textrm{for some}\quad
b_\infty>0$;
\end{itemize}

The assumption ({\bf{H0}}) is assumed to make possible the global
existence of renormalized solutions to the Vlasov-Maxwell-Boltzmann
equations, see \cite{DL1, HW}. The class of collision kernels
satisfying ({\bf{H0}}), ({\bf{H1}}) is not empty since it contains
at least all collision kernels of the form
$b(z,\omega)=b(|\cos(z,\omega)|)$ satisfying ({\bf{H0}}).

Next, we impose one more technical assumption on the sequence of
fluctuations $\{g_\i\}_{\{\i>0\}}$ (see \eqref{m18} below).
\begin{itemize}
\item {\bf{(H2)}} The family $(1+|\xi|^2)\f{g_\i^2}{N_\i}$ is
relatively compact in $w-L^1(dtMd\xi dx)$, where
$N_\i=1+\f{\i}{3}g_\i$.
\end{itemize}
This assumption is the same as $(A2)$ of Lions-Masmoudi \cite{ML} and similar to
$(H2)$ of \cite{BGL}, with the only difference being that we had
to add the time variable, since we are dealing with the
nonstationary case, when compared with the stationary case in
\cite{BGL}.

\subsection{Main Result}
We consider a sequence of solutions $G_\i$ to the scaled
Vlasov-Maxwell-Boltzmann equations
\begin{equation}\label{m17}
\i\partial_t G_\i+\xi\cdot\nabla_xG_\i+e\i(\i E_\i+\xi\times
B_\i)\cdot\nabla_\xi G_\i-e\i^2 E_\i\cdot \xi
G_\i=\f{1}{\i}Q(G_\i, G_\i),
\end{equation}
in the form
\begin{equation}\label{m18}
G_\i=1+\i g_\i.
\end{equation}
We expect that as $\i$ tends to zero, the leading behavior of the
fluctuations $g_\i$ is formally consistent with the incompressible
Electron-Magnetohydrodynamics-Fourier equations. Indeed, formally,
substituting \eqref{m18} into \eqref{m17}, we obtain
\begin{equation}\label{m19}
\i\partial_t g_\i+\xi\cdot\nabla_x g_\i+e\i(\i E+\xi\times
B)\cdot\nabla_\xi g_\i-e\i E_\i\cdot\xi-e\i^2E\cdot \xi
g_\i+\f{1}{\i}Lg_\i=Q(g_\i, g_\i),
\end{equation}
where $L$, the linearized collision operator, is given by
$$Lg=-2Q(1,g)=\int_{\R^3}\int_{\R^3}(g+g_*-g'-g'_*)bd\omega
M_*d\xi_*.$$

Repeated applications of the $d\mathcal{M}$-symmetries yield the
identity
\begin{equation*}
\begin{split}
<v Lg>&=\left\langle\!\!\left\langle v(g+g_*-g'-g'_*\right\rangle\!\!\right\rangle\\
&=\f{1}{4}\left\langle\!\!\left\langle
(v+v_*-v'-v'_*)(g+g_*-g'-g_*')\right\rangle\!\!\right\rangle,
\end{split}
\end{equation*}
for every $v=v(\xi)$ and $g=g(\xi)$ for which the integral makes
sense. This shows that $L$ is formally self-adjoint and has a
non-negative Hermitian form. Furthermore, using the
$d\mathcal{M}$-characterization, it can be shown that for any
$g=g(\xi)$ in the domain of $L$, the following statements are
equivalent:
\begin{subequations}\label{m21}
\begin{align}
&Lg=0;\\
&g=\alpha+\beta\cdot\xi+\f{1}{2}\gamma|\xi|^2,\qquad\textrm{for
some }\qquad (\alpha,\beta,\gamma)\in \R\times\R^3\times\R.
\end{align}
\end{subequations}
This characterizes $N(L)$, the null space of $L$, as the set
obtained by linearizing about $(\alpha, \beta, \gamma)=(0,0,0)$.
From \eqref{m19}, we deduce formally that the limit of $Lg_\i$ is
zero and it can be expected that the limit of $g_\i$ will belong
to $N(L)$. Indeed, it was proved by Grad (see \cite{GH, GL}) that
for any collision kernel $b$ satisfying ({\bf{H1}}), $L$ is a
bounded nonnegative self-adjoint Fredholm operator on $L^2(Md\xi)$
with null space
$$Ker L=\textrm{span}\{1, \xi_1, \xi_2, \xi_3, |\xi|^2\}.$$
Notice that since each entry of the tensor
$\xi\otimes\xi-\f{1}{3}|\xi|^2I$ and of the vector
$\f{1}{2}\xi(|\xi|^2-5)$ is orthogonal to $Ker L$, there exist a
unique tensor $\Phi$ and a unique vector $\Psi$ such that
\begin{align}
&L\Phi=\xi\otimes\xi-\f{1}{3}|\xi|^2I,\quad \Phi\in(Ker
L)^\bot\subset
L^2(Md\xi);\\
&L\Psi=\f{1}{2}\xi(|\xi|^2-5),\quad \Psi\in(Ker L)^\bot\subset
L^2(Md\xi).
\end{align}

Now, our main result can be stated as follows.

\begin{Theorem}\label{T1}
Under the hypotheses {\rm ({\bf{H0}})-({\bf{H2}})}, let
$G_\i(t,x,\xi)$, with the form \eqref{m18}, be a sequence of
non-negative renormalized solutions to the scaled
Vlasov-Maxwell-Boltzmann equations \eqref{me31} satisfying the
initial condition \eqref{m34}. Then,
\begin{itemize}
\item  The sequence $g_\i$ converges in the sense of distributions
and almost everywhere to a function $g$ as $\i$ tends to zero, and
$g$ is an infinitesimal Maxwellian,
\begin{equation}\label{m23}
g=h+\u\cdot\xi+\theta\left(\f{1}{2}|\xi|^2-\f{3}{2}\right),
\end{equation}
where the velocity $\u$ satisfies the incompressibility relation,
while the density and temperature functions, $h$ and $\theta$,
satisfy the Boussinesq relation:
\begin{equation}\label{m24}
\Dv\u=0,\qquad \nabla_x(h+\theta)=0.
\end{equation}
\item As $\i\rightarrow 0$, $E_\i$ and $B_\i$  converge to $E$
and $B$ in the sense of distributions and $L^\infty_t(L^2(\T))$
respectively.
\item Moreover, the functions $h$, $\u$, $\theta$, $B$, and $E$ are
weak solutions of \eqref{IM} with
\begin{equation}\label{m28}
\mu=\f{1}{10}<\Phi:L\Phi>,\qquad\kappa=\f{2}{15}<\Psi\cdot L\Psi>.
\end{equation}
\end{itemize}
\end{Theorem}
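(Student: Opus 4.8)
The plan is to follow the now-standard relative-entropy / moment-method program for hydrodynamic limits (as in \cite{BGL, GL, LM}), adapted to incorporate the Maxwell system and the Lorentz force. I would organize the argument in four stages: (i) extract the fluctuation limits and the infinitesimal-Maxwellian structure; (ii) pass to the limit in the Maxwell equations and identify the constraints on $E$ and $B$; (iii) pass to the limit in the approximate local conservation laws (momentum and energy) obtained by taking moments of the renormalized equation; (iv) identify the limiting nonlinearities and viscosity/heat-conduction coefficients, thereby recognizing the limit as a weak solution of \eqref{IM}.

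\textbf{Stage (i): compactness and the limiting form of $g_\i$.} From the entropy bound \eqref{m34} and Lemma~\ref{re} (in the form \eqref{m13}), the dissipation integral $\frac{1}{\i}\int_0^t\mathcal{R}(G_\i)\,ds$ is bounded by $C\i^2$, and $\mathcal{H}_\i(t)\le C\i^3$ uniformly. This yields, after the usual analysis, that $g_\i$ is bounded in $L^\infty_t L^1_{x,\xi}$ with $(1+|\xi|^2)$ weight and that $\frac{1}{\i}Lg_\i\to 0$; together with the a priori bounds of Section~3 and the relative compactness assumption ({\bf H2}), this forces (along a subsequence) $g_\i\to g$ with $Lg=0$, hence $g$ has the form \eqref{m23}. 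The incompressibility and Boussinesq relations \eqref{m24} come from passing to the limit in the approximate local conservation of mass and in the combination of the motion and energy equations at order $\i$, exactly as in the Boltzmann case; the Vlasov and Maxwell terms are of higher order in $\i$ here and do not interfere. The electromagnetic part of $\mathcal{H}_\i$ being $O(\i^3)$ gives $\i^{3/2}(E_\i,B_\i)$ bounded in $L^\infty_t L^2_x$, so after the rescaling built into \eqref{re2} one recovers that $E_\i, B_\i$ themselves are bounded in $L^\infty_t L^2_x$, yielding weak-$*$ limits $E,B$; this is the content of Section~4, which I would simply invoke.

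\textbf{Stage (iii): the limiting momentum and temperature equations.} Taking the $\xi$-moment and the $\frac12(|\xi|^2-5)$-moment of the renormalized equation \eqref{m291}--\eqref{m294} (with an appropriate renormalization $\Gamma$), one gets approximate conservation laws with ``conservation defects'' that Section~5 shows vanish. The flux terms are handled by the classical splitting of $\xi\otimes\xi - \frac13|\xi|^2 I$ and $\frac12\xi(|\xi|^2-5)$ into their projections onto $\mathrm{Ker}\,L$ and its complement, using $\Phi$ and $\Psi$; the gradient part produces the viscous and heat terms with coefficients \eqref{m28}, and the nonlinear flux $\langle \xi\otimes\xi\, g_\i^2\rangle$-type terms converge to $\u\otimes\u$ and $\u\t$ by the nonlinear compactness coming from ({\bf H2}) (or from the velocity-averaging plus entropy-dissipation decomposition of \cite{GL, LM}). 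The genuinely new terms are the Lorentz-force contributions: the moment of $e\i(\i E_\i + \xi\times B_\i)\cdot\nabla_\xi G_\i$ and of $e\i^2 E_\i\cdot\xi G_\i$. One integrates by parts in $\xi$; the leading surviving contributions, after using $G_\i = 1+\i g_\i$ and the weak limits of $E_\i, B_\i$, produce exactly the terms $\alpha e E$ and $(\nabla\times B)\times B$ in \eqref{IM1} (using $j = \nabla\times B = e\u$ from the limit of \eqref{me32}) and the constant $\alpha = \frac{1}{3}\langle|\xi|^2\rangle$. The limiting Faraday law $\partial_t B + \nabla\times E = 0$ and $\Dv B=0$ pass directly from \eqref{me33}--\eqref{me34}, and $\Dv E$-type information combined with $\rho_\i = O(\i)$ is used as needed.

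\textbf{The main obstacle}, as in all such limits, is the \emph{nonlinear weak compactness}: passing to the limit in the quadratic flux terms $\langle (\xi\otimes\xi-\frac13|\xi|^2 I) q_\i\rangle$ where $q_\i$ is the renormalized collision integrand, and showing that the renormalization does not destroy the limiting conservation laws (i.e.\ the conservation-defect terms of Section~5 tend to zero). Here the product $B_\i\otimes B_\i$ in the limit also requires \emph{strong} $L^2_t L^2_x$ compactness of $B_\i$ — not merely weak — which is the reason the theorem only claims convergence of $B_\i$ in $L^\infty_t L^2$; I would obtain this strong compactness from the Maxwell system itself (a compensated-compactness / Aubin–Lions argument using $\partial_t B_\i = -\nabla\times E_\i$ together with the bound on $E_\i$ and $\Dv B_\i = 0$), which is precisely the delicate point that distinguishes this problem from the pure Boltzmann Navier–Stokes limit. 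Once strong $B_\i$-compactness and the vanishing of conservation defects are in hand, assembling the limiting equations and matching them to the weak formulation of \eqref{IM} with coefficients \eqref{m28} is routine.
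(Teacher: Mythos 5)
Your overall architecture (entropy bounds $\Rightarrow$ infinitesimal Maxwellian, moment equations with vanishing conservation defects, $\Phi,\Psi$ decomposition of the fluxes, separate treatment of the Maxwell system) matches the paper, but there are two concrete gaps at precisely the points where this problem differs from the pure Boltzmann Navier--Stokes limit.

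First, the electric field. You assert that the electromagnetic part of $\mathcal{H}_\i$ being $O(\i^3)$ lets one ``recover that $E_\i, B_\i$ themselves are bounded in $L^\infty_t L^2_x$.'' This is false: by \eqref{re2} and \eqref{m34} the entropy controls only $\|B_\i\|_{L^\infty_t(L^2)}$ and $\i^{1/2}\|E_\i\|_{L^\infty_t(L^2)}$, so no uniform $L^2$ bound on $E_\i$ is available, and the paper explicitly flags this as one of the main difficulties. The compactness of $E_\i$ (needed both to pass to the limit in the Faraday law and to make sense of the term $\alpha e E$ in \eqref{IM1}) is obtained by a separate structural argument: writing $E_\i\r_\i=\i E_\i\Dv E_\i$ and using the Poynting-type identity \eqref{4m53} together with $\r_\i=1+\i\langle g_\i\rangle$ to express $E_\i$ as in \eqref{4m52}, which yields only a uniform bound in $(W^{1,\infty}_0)'$ and hence weak convergence in $(W^{2,p}_0)'$ (Lemmas \ref{max2l} and \ref{maxl}). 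This step cannot be ``simply invoked'' as a consequence of the entropy; it is an independent use of the Gauss law and the quasineutrality $\r_\i=O(\i)$, and without it your Stage (iii) has no meaning for the limit of $-\alpha e E_\i$.

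Second, the quadratic magnetic term. The term $(\nabla\times B)\times B$ does not arise as a limit of $B_\i\otimes B_\i$; it arises from the Lorentz-force moment $B_\i\cdot\langle\xi\times\nabla_\xi\xi_k\, g_\i^b\rangle=B_\i\cdot(\langle\xi g_\i^b\rangle\times e_k)$, i.e.\ a product of the weakly convergent $B_\i$ with the weakly convergent current $\langle\xi g_\i^b\rangle$, identified in the limit as $B\times j$ with $j=e\u=\nabla\times B$ (Lemma \ref{lfc}). Your proposed fix --- strong $L^2_{t,x}$ compactness of $B_\i$ by Aubin--Lions from $\partial_t B_\i=-\nabla\times E_\i$ and $\Dv B_\i=0$ --- would fail: Aubin--Lions needs uniform spatial regularity of $B_\i$, and $\nabla\times B_\i=\i\partial_t E_\i+\i^{-1}j_\i$ is not uniformly bounded in $L^2$ because $\i\partial_t E_\i$ is controlled only distributionally in time, so the div--curl bound gives no $H^1_x$ estimate. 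The paper instead puts the compactness on the \emph{other} factor: the velocity-averaging argument of Lemma \ref{ll12} gives spatial equicontinuity of $\langle\xi g_\i^b\rangle$ in $L^2_{t,x}$, while $\partial_t B_\i=-\nabla\times E_\i$ gives time-equicontinuity of $B_\i$ in a negative Sobolev space, and the product passes to the limit by Lions' compensated-compactness lemma (Lemma 5.1 of \cite{PLL}). Proving Lemma \ref{ll12} itself requires the decomposition of the renormalized equation into a small part and an averaged part controlling the extra Vlasov terms $\i(\i E_\i+\xi\times B_\i)\cdot\nabla_\xi$ and $\i E_\i\cdot\xi G_\i$; none of this is present in your sketch.
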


\bigskip

\section{Implications of the Entropy Inequality}

In this section, we first recall some results in \cite{BGL, GL}
which were established in the greatest possible generality,  and
relied only on the \textit{a priori} estimates and in particular
have nothing to do with the equations. To this end, from now on,
we assume that the initial data $G_{\i}^0$ satisfies the entropy
bound:
\begin{equation}\label{3m2}
\i\int_{\mathcal{T}}\left<G^0_{\i}\ln
G_{\i}^{0}-G^0_{\i}+1\right>dx+\f{\i^3}{2}\int_\T(\i|E_\i^{0}|^2+|B^{0}_\i|^2)dx\le
C\i^3
\end{equation}
with $C>0$. From the relative entropy, we can obtain the uniform
bound $\|B_\i\|_{L^\infty_t(L^2(dx))}$, and hence we can assume
\begin{equation}\label{max1}
B_\i\rightarrow B\quad\textrm{weakly}^* \quad\textrm{in}\quad
L^\infty_t(L^2(dx)),
\end{equation}
with $\Dv B=0$ in $\mathcal{D}'$. Furthermore, from the relative
entropy, $\i^{\f{1}{2}}\|E_\i\|_{L^\infty_t(L^2(dx))}$ is
uniformly bounded, and hence, we can assume that
\begin{equation}\label{max2}
\i^{\f{1}{2}}E_\i\rightarrow\Omega,\quad\textrm{weakly}^*
\quad\textrm{in}\quad L^\infty_t(L^2(dx))
\end{equation}
for some function $\Omega\in L^\infty_t(L^2(dx))$. Then the results in
\cite{BGL, GL}, combining with \eqref{reg} and \eqref{3m2} imply
the following convergence.

\begin{Theorem}\label{bgl}
Under assumptions  {\rm ({\bf{H0}})-({\bf{H2}})}, let $F_\i$ be a
family of renormalized solutions to \eqref{me3} with initial data
$(F_{\i}^{0}, E_\i^{0}, B_\i^{0})$ satisfying \eqref{m34}, and
define the associated family of fluctuations by
$$g_\i=\f{F_\i-M}{\i M}.$$
Then
\begin{itemize}
\item $g_\i$ is relatively compact in $w-L^1_{loc}(dtdx;
L^1((1+|\xi|^2)Md\xi))$,  and for almost every $t\in [0,\infty)$,  $g$
satisfies
\begin{equation}\label{3m15}
\int_\T\f{1}{2}\left<g^2(t)\right>dx\le\liminf_{\i\rightarrow
0}\int\left<\f{1}{\i^2}h(\i g_\i(t))\right>dx\le C;
\end{equation}
moreover, for almost every $(t,x)$, $g(t,x,\cdot)\in N(L)$, which
means that $g$ is of the form
\begin{equation}\label{b1}
g(t,x,\cdot)=h(t,x)+\u(t,x)\cdot\xi+\t(t,x)\left(\f{1}{2}|\xi|^2-\f{3}{2}\right),
\end{equation}
where $(h,\u,\t)\in L^\infty(dt; L^2(dx; \R\times \R^3\times\R))$.
\item the rescaled collision integrands
\begin{equation}\label{b4}
q_\i=\f{1}{\i^2}(G'_\i G'_{\i *}-G_\i G_{\i *})
\end{equation}
satisfy that 
$\gamma(G_\i)q_\i$ is relatively compact in
$w-L^1_{loc}(dtdx; L^1((1+|\xi|^2)d\mathcal{M}));$   furthermore, any of the limit points
$q$ of $\gamma(G_\i)q_\i$ as $\i\rightarrow 0$ satisfies the
$d\mathcal{M}$-symmetry relations
\begin{equation}\label{b5}
\left\langle\!\left\langle\phi(\xi)q\right\rangle\!\right\rangle=\left\langle\!\!\!\left\langle\f{1}{4}(\phi+\phi_*-\phi'-\phi'_*)q
\right\rangle\!\!\!\right\rangle,
\end{equation}
and, $q\in L^2(0,T; L^2(d\mathcal{M} dx))$.
\item for any subsequence $\i_n\rightarrow 0$ such that
$$g_{\i_n}\rightarrow g,\quad\textrm{and}\quad
\gamma(G_{\i_n})q_{\i_n}\rightarrow q$$ in $w-L^1_{loc}(dtdx;
L^1((1+|\xi|^2)Md\xi))$ and in $w-L^1_{loc}(dtdx;
L^1((1+|\xi|^2)Md\mathcal{M}))$ respectively.
\item denoting $N_\i=\f{2}{3}+\f{1}{3}G_\i$, then $\f{g_\i}{N_\i}$ is
bounded in $L^\infty_t(L^2(Md\xi dx))$ and  relatively compact in $w-L^1_{loc}(dtdx; L^1((1+|\xi|^2)d\mathcal{M}))$.
\end{itemize}
\end{Theorem}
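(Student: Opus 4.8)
The plan is to read off every assertion from the single entropy balance \eqref{m13}--\eqref{m15} and the initial bound \eqref{3m2}, following the programme of Bardos--Golse--Levermore \cite{BGL} and Golse--Saint-Raymond \cite{GL}; the only genuinely new point is to check that the Vlasov and Maxwell contributions in \eqref{me31} do not spoil any estimate, since each carries an explicit power of $\i$ and is absorbed by the electromagnetic part $\f{\i^3}{2}\int_\T(\i|E_\i|^2+|B_\i|^2)\,dx$ of $\mathcal{H}_\i$. From \eqref{m13} and \eqref{3m2} one has $\mathcal{H}_\i(t)+\f1\i\int_0^t\mathcal{R}(G_\i(s))\,ds\le\mathcal{H}_\i(0)\le C\i^3$ for every $t$; writing $G_\i=1+\i g_\i$ and $h(z)=(1+z)\ln(1+z)-z$ and dividing by $\i^3$, this says $\int_\T\langle\i^{-2}h(\i g_\i(t))\rangle\,dx+\tfrac12\int_\T(\i|E_\i|^2+|B_\i|^2)\,dx\le C$ together with $\i^{-1}\int_0^\infty\mathcal{R}(G_\i)\,dt\le C\i^3$. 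The electromagnetic part gives $\|B_\i\|_{L^\infty_tL^2}+\i^{1/2}\|E_\i\|_{L^\infty_tL^2}\le C$, hence \eqref{max1}--\eqref{max2}; the pointwise bound $h(z)\ge\f{z^2}{2(1+z/3)}$, i.e. $\i^{-2}h(\i g_\i)\ge\tfrac12\,g_\i^2/N_\i$, gives a uniform bound on $g_\i/\sqrt{N_\i}$ in $L^\infty_t(L^2(M\,d\xi\,dx))$, and combining it with hypothesis (H2), the superlinearity of $h$ at infinity, and the $(1+|\xi|^2)$-moment bound accompanying the DiPerna--Lions solutions yields, via the Dunford--Pettis criterion, the relative compactness of $g_\i$ in $w$-$L^1_{loc}(dt\,dx;L^1((1+|\xi|^2)M\,d\xi))$.

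\emph{First assertion.} Given the compactness just obtained, \eqref{3m15} follows by weak lower semicontinuity of the $L^2$ functional along a subsequence $g_\i\rightharpoonup g$ (using $\i^{-2}h(\i z)\to\tfrac12 z^2$ and $N_\i\to1$ a.e.), its upper bound $\le C$ being the displayed entropy estimate. To obtain $g(t,x,\cdot)\in N(L)$ a.e., isolate $Lg_\i$ from \eqref{m19}: every other term, tested against a fixed test function with enough $\xi$-decay, is weakly bounded -- the quadratic term through the compactness of $q_\i$ below, the Vlasov and electric terms through the bounds on $\i^{1/2}E_\i$ and $B_\i$, the streaming term through the weak bound on $g_\i$ -- so $Lg_\i=\i\cdot(\text{weakly bounded})\rightharpoonup0$, whence $Lg=0$ by continuity of $L$. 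Grad's theorem, $\textrm{Ker}\,L=\textrm{span}\{1,\xi_1,\xi_2,\xi_3,|\xi|^2\}$, then gives the form \eqref{b1}, and $(h,\u,\t)\in L^\infty(dt;L^2(dx))$ by \eqref{3m15}.

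\emph{Remaining assertions.} For $q_\i$ as in \eqref{b4}, the dissipation bound $\i^{-1}\int_0^\infty\mathcal{R}(G_\i)\,dt\le C\i^3$ and the elementary inequality $(\sqrt a-\sqrt b\,)^2\le\tfrac14(a-b)(\ln a-\ln b)$, applied with $a=G'_\i G'_{\i*}$ and $b=G_\i G_{\i*}$, give a uniform $O(\i^2)$ bound in $L^2(d\mathcal{M}\,dx\,dt)$ for $\sqrt{G'_\i G'_{\i*}}-\sqrt{G_\i G_{\i*}}$; writing $q_\i=\i^{-2}\,(\sqrt{G'_\i G'_{\i*}}-\sqrt{G_\i G_{\i*}})\,(\sqrt{G'_\i G'_{\i*}}+\sqrt{G_\i G_{\i*}})$ and choosing the renormalization $\gamma$ so that $\gamma(G_\i)\,(\sqrt{G'_\i G'_{\i*}}+\sqrt{G_\i G_{\i*}})$ is $L^2$-bounded, a Cauchy--Schwarz estimate bounds $\gamma(G_\i)q_\i$ in $L^1(d\mathcal{M}\,dx\,dt)$, and the $L^1$ velocity-averaging method of \cite{GL} upgrades this to relative compactness in $w$-$L^1_{loc}(dt\,dx;L^1((1+|\xi|^2)d\mathcal{M}))$ and to $q\in L^2(0,T;L^2(d\mathcal{M}\,dx))$ for any limit point $q$. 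The $d\mathcal{M}$-symmetries \eqref{b5} follow by symmetrizing $\gamma(G_\i)q_\i$ under the collisional changes of variables recorded after Lemma \ref{re} and letting $\i\to0$ (with $\gamma(G_\i)\to1$ a.e.); the third bullet is merely the simultaneous diagonal extraction of the two subsequences. Finally, $N_\i=\f{2}{3}+\f{1}{3}G_\i\ge\f{2}{3}$ gives $|g_\i/N_\i|\le\sqrt{3/2}\,|g_\i/\sqrt{N_\i}|$, so the $L^\infty_t(L^2(M\,d\xi\,dx))$ bound on $g_\i/N_\i$ comes from the one on $g_\i/\sqrt{N_\i}$, and its $w$-$L^1_{loc}(dt\,dx;L^1((1+|\xi|^2)d\mathcal{M}))$ relative compactness, after integrating out $(\xi_*,\omega)$ and using (H1), is a restatement of hypothesis (H2).

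The step I expect to be the main obstacle is the uniform-in-$\i$ control of the velocity tails, i.e. the equi-integrability in $\xi$ of $g_\i$ and of $\gamma(G_\i)q_\i$ that underlies every weak-$L^1$ compactness claim above. For the Boltzmann part this rests on the moment bound accompanying the entropy and on the kernel hypotheses (H0)--(H1), but here the Vlasov operator $e\i(\i E_\i+\xi\times B_\i)\cdot\nabla_\xi$ carries an extra factor $\xi$ and must be shown not to corrupt the moment bounds. The saving feature is its prefactor $\i$ together with the fact that the $\i^3$-weighted $L^2$ norms of $E_\i$ and $B_\i$ are exactly those bounded by $\mathcal{H}_\i$, so the extra contribution is $o(1)$; turning this into a rigorous moment estimate at the level of renormalized solutions -- where only \eqref{m294}, not the equation itself, is available -- is the delicate part, and it is there that the argument genuinely goes beyond \cite{BGL, GL}.
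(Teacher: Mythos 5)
Your proposal is correct and takes essentially the same route as the paper: the paper offers no independent proof of this theorem, but simply notes that the compactness results of \cite{BGL, GL} depend only on the \emph{a priori} entropy and dissipation bounds, which here follow from Lemma \ref{re} together with the initial bound \eqref{3m2}, exactly as you derive them. One clarification on the step you single out as the likely obstacle: the equi-integrability in $\xi$ of $(1+|\xi|^2)g_\i$ and of $\gamma(G_\i)q_\i$ is obtained from the entropy and dissipation functionals alone, via Young's inequality for $h(z)=(1+z)\ln(1+z)-z$ (and the analogous dissipation integrand) against the Gaussian weight $Md\xi$ --- no moment propagation along the equation is required, so the Vlasov--Lorentz term plays no role in this theorem, and the ``delicate part'' you anticipate only surfaces later, in Sections 4--6, where the renormalized formulation \eqref{m294} is actually used.
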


Weak compactness statements regarding $g_\i$ and $q_\i$ result in
the following bound for their limits.

\begin{Lemma}\label{bgl1}
Under the same conditions as Theorem \ref{bgl}, for almost every
$t\in[0,\infty)$ the function $g$ and $q$ satisfy
\begin{equation}\label{3m171}
\begin{split}
&\int_\T\f{1}{2}\left<g^2(t)\right>dx+\f{1}{2}\int_\T(|\Omega|^2+|B|^2)dx+\int_0^t\int_\T\f{1}{4}\left\langle\!\!\left\langle q^2\,\right\rangle\!\!\right\rangle dxds\\
&\le\liminf_{\i\rightarrow 0}\int_\T\left<\f{1}{\i^2}h(\i
g^0_\i)\right>dx\le C.
\end{split}
\end{equation}
\end{Lemma}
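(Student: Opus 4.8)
The plan is to obtain \eqref{3m171} by passing to the limit $\i\to 0$ in the renormalized‑solution entropy inequality \eqref{m293}. First I would divide \eqref{m293} by $\i^3$ and rewrite everything through the fluctuation $g_\i$, using $G_\i=1+\i g_\i$, so that $G_\i\ln G_\i-G_\i+1=h(\i g_\i)$ with $h(z)=(1+z)\ln(1+z)-z$; this recasts it in the normalized form
\begin{equation*}
\begin{split}
&\int_\T\left\langle\f{1}{\i^2}h(\i g_\i(t))\right\rangle dx+\f12\int_\T\left(|\i^{1/2}E_\i(t)|^2+|B_\i(t)|^2\right)dx+\f{1}{\i^4}\int_0^t\mathcal{R}(G_\i(s))\,ds\\
&\qquad\le\f{1}{\i^3}\mathcal{H}_\i(0)=\int_\T\left\langle\f{1}{\i^2}h(\i g^0_\i)\right\rangle dx+\f12\int_\T\left(|\i^{1/2}E^0_\i|^2+|B^0_\i|^2\right)dx,
\end{split}
\end{equation*}
valid for every $t\ge 0$. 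By \eqref{m34} the right‑hand side is $\le C$, and since (by the hypotheses on the initial data) the $\i$‑rescaled initial electromagnetic energy $\f12\int_\T(\i|E^0_\i|^2+|B^0_\i|^2)dx$ vanishes as $\i\to0$, its $\liminf$ equals $\liminf_{\i\to0}\int_\T\langle\f{1}{\i^2}h(\i g^0_\i)\rangle dx$. It then suffices to bound each of the three non‑negative terms on the left from below, in the limit, by the corresponding term of \eqref{3m171}.

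For the entropy term I would invoke \eqref{3m15} of Theorem \ref{bgl} directly: $z\mapsto\f{1}{\i^2}h(\i z)$ is non‑negative, convex, and tends pointwise to $\f12 z^2$, which together with the weak $L^1((1+|\xi|^2)Md\xi\,dx)$ compactness of $g_\i$ gives $\liminf_{\i\to0}\int_\T\langle\f{1}{\i^2}h(\i g_\i(t))\rangle dx\ge\int_\T\f12\langle g^2(t)\rangle dx$ for a.e.\ $t$. For the electromagnetic term, the convexity of $w\mapsto|w|^2$, the weak‑$^*$ convergences \eqref{max1} and \eqref{max2}, and lower semicontinuity of the $L^2(dx)$ norm give $\liminf_{\i\to0}\f12\int_\T(|\i^{1/2}E_\i(t)|^2+|B_\i(t)|^2)dx\ge\f12\int_\T(|\Omega|^2+|B|^2)dx$; to make this rigorous at a fixed time I would first multiply the normalized inequality by a non‑negative $\psi(t)$, pass to the limit in the space‑time integral, and then localize by varying $\psi$.

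The dissipation term is the substantive point. Writing $G'_{\i*}G'_\i-G_{\i*}G_\i=\i^2 q_\i$, the integrand of $\mathcal{R}(G_\i)$ equals $\f14\,\i^2 q_\i\ln\!\bigl(1+\i^2 q_\i/(G_{\i*}G_\i)\bigr)$, so the elementary inequality $\f14(u-v)(\ln u-\ln v)\ge(\sqrt u-\sqrt v)^2$, applied with $u=G'_{\i*}G'_\i$ and $v=G_{\i*}G_\i$, yields
$$\f{1}{\i^4}\mathcal{R}(G_\i)\ \ge\ \int_\T\left\langle\!\!\left\langle\f{q_\i^2}{\bigl(\sqrt{G'_{\i*}G'_\i}+\sqrt{G_{\i*}G_\i}\bigr)^2}\right\rangle\!\!\right\rangle dx.$$
I would then use that $\gamma(G_\i)q_\i\to q$ weakly in $w-L^1_{loc}(dtdx;L^1((1+|\xi|^2)d\mathcal{M}))$ by Theorem \ref{bgl}, and that the entropy bound forces $G_\i,G_{\i*}\to1$ in measure — so the weight $(\sqrt{G'_{\i*}G'_\i}+\sqrt{G_{\i*}G_\i})^{-2}$ tends to $\f14$ off a set whose $d\mathcal{M}\,dx\,dt$‑measure vanishes — and conclude, through the truncation and weak lower semicontinuity argument of \cite{BGL,GL}, that $\liminf_{\i\to0}\f{1}{\i^4}\int_0^t\mathcal{R}(G_\i(s))\,ds\ge\int_0^t\int_\T\f14\langle\!\langle q^2\rangle\!\rangle dx\,ds$ for a.e.\ $t$. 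Summing the three lower bounds and using the bound on the right‑hand side gives \eqref{3m171}.

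The step I expect to be the main obstacle is this last one: extracting the \emph{sharp} constant $\f14$ forces one to pass from the unrenormalized collision integrand $q_\i$ — which is weakly compact only after multiplication by the renormalizing factor $\gamma(G_\i)$ — to the square of its weak limit, and hence to control both the discrepancy $\gamma(G_\i)q_\i-q_\i$ and the possible concentration of the weight $(\sqrt{G'_{\i*}G'_\i}+\sqrt{G_{\i*}G_\i})^{-2}$ on the set where $G_\i$ is far from $1$; both are handled by the entropy‑dissipation estimates of \cite{BGL,GL} and, in the present setting, by the $g_\i/N_\i$ bound of Theorem \ref{bgl}. A secondary and routine point is the passage from the space‑time‑integrated inequalities to statements valid for almost every fixed $t$, dealt with by testing against non‑negative functions of $t$ together with the Lebesgue differentiation theorem.
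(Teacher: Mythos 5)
Your proposal is correct and follows essentially the same route as the paper: one takes the $\liminf$ of the normalized entropy inequality \eqref{reg} and bounds each non-negative term from below by weak lower semicontinuity, the paper simply outsourcing the entropy and dissipation terms to Proposition 3.1 of \cite{BGL}, whose proof is exactly the truncation/sharp-constant argument you sketch for the $\f14\langle\!\langle q^2\rangle\!\rangle$ term. (Both you and the paper implicitly rely on the initial electromagnetic energy being negligible, or at least absorbed into the constant $C$, to state the right-hand side in the form \eqref{3m171}.)
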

\begin{proof}
Taking the $\liminf$ on the both sides of the entropy inequality
\eqref{reg}, we obtain
\begin{equation}\label{bgl1111}
\begin{split}
&\liminf_{\i\rightarrow 0}\int_{\mathcal{T}}\left<\f{1}{\i^2}h(\i
g_{\i}(t))\right>dx+\f{1}{2}\liminf_{\i\rightarrow
0}\int_{\T}(\i|E_\i|^2+|B_\i|^2)dx\\&\quad+\liminf_{\i\rightarrow
0}\int_0^t\int_{\mathcal{T}}\f{1}{4}\left\langle\!\!\!\left\langle\f{1}{\i^4}r\left(\f{\i^2
q_\i}{G_{*\i
G_\i}}\right)G_{*\i}G_{\i}\right\rangle\!\!\!\right\rangle dxds
\\&\le \liminf_{\i\rightarrow
0}\left(\int_{\mathcal{T}}\left<\f{1}{\i^2}h(\i
G^0_{\i}(t))\right>dx+\f{1}{2}\int_{\T}(\i|E^{0}_\i|^2+|B^{0}_\i|^2)dx\right)\\&\le
C.
\end{split}
\end{equation}

Due to the lower semi-continuity of the weak convergence, we
deduce that
\begin{equation}\label{bgl1112}
\int_{\T}(|\Omega|^2+|B|^2)dx\le\liminf_{\i\rightarrow
0}\int_{\T}(\i|E_\i|^2+|B_\i|^2)dx,
\end{equation}
while, from the second assertion of Proposition 3.1 in \cite{BGL},
we have
\begin{equation}\label{bgl1113}
\begin{split}
&\int_\T\f{1}{2}\left<g^2(t)\right>dx+\int_0^t\int_\T\f{1}{4}\left\langle\!\!\left\langle
q^2\right\rangle\!\!\right\rangle dxds\\&\le\liminf_{\i\rightarrow
0}\int_{\mathcal{T}}\left<\f{1}{\i^2}h(\i
g_{\i}(t))\right>dx+\liminf_{\i\rightarrow
0}\int_0^t\int_{\mathcal{T}}\f{1}{4}\left\langle\!\!\!\left\langle\f{1}{\i^4}r\left(\f{\i^2
q_\i}{G_{*\i
G_\i}}\right)G_{*\i}G_{\i}\right\rangle\!\!\!\right\rangle dxds.
\end{split}
\end{equation}
Substituting \eqref{bgl1112} and \eqref{bgl1113} back into
\eqref{bgl1111}, we finish the proof of \eqref{3m171}.
\end{proof}

To better understand the behavior of the fluctuation
$\{g_\i\}_{\{\i>0\}}$, as in \cite{GL} we introduce a class of
bump functions
\begin{equation}\label{3m18}
\Upsilon=\left\{\gamma:\R_+\rightarrow[0,1]|\gamma\in C^1,\quad
\gamma\left(\left[\f{3}{4},\f{5}{4}\right]\right)=\{1\},\quad
\textrm{supp}\gamma\subset\left[\f{1}{2},\f{3}{2}\right]\right\}.
\end{equation}
We decompose $g_\i$ as
\begin{equation}\label{3m19}
g_\i=g_\i^b+\i g_\i^c
\end{equation}
with
$$g_\i^b=\f{1}{\i}(G_\i-1)\gamma(G_\i),\quad
g_\i^c=\f{1}{\i^2}(G_\i-1)(1-\gamma(G_\i)),$$ where
$\gamma\in\Upsilon$. The following entropy controls (Proposition
2.1 and Proposition 2.7 in \cite{GL}) will be very useful:

\begin{Lemma}[{\bf Entropy controls}]\label{ec} Assume that the bump
function $\gamma\in\Upsilon$ as in \eqref{3m18}. The relative
fluctuation $g_\i$ of the density satisfies the following
estimates:
\begin{itemize}
\item $\i|g_\i^b|\le\f{1}{2}$ and $$g_\i^b=O(1)\quad\textrm{in}\quad L^\infty_t(L^2(Md\xi
dx));$$
\item $(1-\gamma(G_\i))\le 4\i^2|g_\i^c|$, which implies that $\f{1}{\i}(1-\gamma(G_\i))\le 2|g_\i^c|^{\f{1}{2}}$, and
$$g_\i^c=O(1)\quad\textrm{in}\quad L^\infty_t(L^1(Md\xi
dx));$$
\item $(1-\gamma(G_\i))G_\i\le 5\i^2|g_\i^c|$, and $(1-\gamma(G_\i))\le 4\i^2|g_\i^c|$.
\end{itemize}
\end{Lemma}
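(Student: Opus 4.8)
The plan is to reduce each of the three assertions to a \emph{pointwise} inequality in the variable $(t,x,\xi)$ and then integrate it against $M\,d\xi\,dx$, the only analytic input being the relative-entropy bound $\int_\T\langle\f1{\i^2}h(\i g_\i(t))\rangle\,dx\le C$ valid for a.e.\ $t\ge0$, which follows from the entropy inequality \eqref{m293} and the initial bound \eqref{m34} because every term of $\mathcal{H}_\i$ in \eqref{m14} is nonnegative; throughout, $h(z)=(1+z)\ln(1+z)-z$, so that $h(\i g_\i)=G_\i\ln G_\i-G_\i+1$. Everything then rests on two elementary properties of $h$: for $\f12\le 1+z\le\f32$ one has $h(z)\ge\f13 z^2$ (since $h(0)=h'(0)=0$ and $h''(z)=(1+z)^{-1}\ge\f23$ there), and whenever $1+z\notin[\f34,\f54]$ one has $h(z)\ge c_0|z|$ for a universal $c_0>0$ — for $1+z\ge\f54$ by monotonicity of $z\mapsto h(z)/z$ on $(0,\infty)$, and for $0\le 1+z\le\f34$ because $h$ is decreasing on $(-1,0)$ and $|z|\le1$ there, so $h(z)\ge h(-\f14)\ge h(-\f14)|z|$.

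For the first bullet I would note that $g_\i^b=\f1\i(G_\i-1)\gamma(G_\i)=g_\i\,\gamma(G_\i)$ and $\supp\gamma\subset[\f12,\f32]$, so on $\{\gamma(G_\i)\ne0\}$ we have $\f12\le G_\i\le\f32$, hence $\i|g_\i|=|G_\i-1|\le\f12$ and therefore $\i|g_\i^b|=\i|g_\i|\gamma(G_\i)\le\f12$. On the same set $(g_\i^b)^2=\gamma(G_\i)^2g_\i^2\le g_\i^2$, and the convexity estimate gives $h(\i g_\i)\ge\f13\i^2g_\i^2\ge\f13\i^2(g_\i^b)^2$; dividing by $\i^2$ and integrating, $\int_\T\langle(g_\i^b)^2\rangle\,dx\le3\int_\T\langle\f1{\i^2}h(\i g_\i)\rangle\,dx\le3C$, which is the asserted $O(1)$ bound in $L^\infty_t(L^2(M\,d\xi\,dx))$.

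For the second and third bullets the key is $\gamma([\f34,\f54])=\{1\}$, which forces $\{1-\gamma(G_\i)>0\}\subset\{|G_\i-1|\ge\f14\}$. Since $|g_\i^c|=\f1{\i^2}|G_\i-1|(1-\gamma(G_\i))$, on that set $4\i^2|g_\i^c|=4|G_\i-1|(1-\gamma(G_\i))\ge(1-\gamma(G_\i))$, i.e.\ $(1-\gamma(G_\i))\le4\i^2|g_\i^c|$; combining with $(1-\gamma(G_\i))^2\le1-\gamma(G_\i)$ gives $\f1\i(1-\gamma(G_\i))\le2|g_\i^c|^{1/2}$. Moreover $G_\i\le5|G_\i-1|$ on $\{G_\i\le\f34\}\cup\{G_\i\ge\f54\}$ (if $G_\i\ge\f54$ then $5(G_\i-1)\ge G_\i$; if $0\le G_\i\le\f34$ then $5(1-G_\i)\ge\f54>G_\i$), so $(1-\gamma(G_\i))G_\i\le5|G_\i-1|(1-\gamma(G_\i))=5\i^2|g_\i^c|$. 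Finally, the growth estimate $h(z)\ge c_0|z|$ for $1+z\notin[\f34,\f54]$ gives $|g_\i^c|\le\f1{\i^2}|G_\i-1|(1-\gamma(G_\i))\le\f1{c_0\i^2}h(\i g_\i)$; integrating against $M\,d\xi\,dx$ and invoking the entropy bound yields $g_\i^c=O(1)$ in $L^\infty_t(L^1(M\,d\xi\,dx))$.

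The one mildly delicate point — the one I would write out with care — is the lower bound $h(z)\ge c_0|z|$ when $1+z$ stays away from $1$ but is allowed to approach $0$: there $h$ does not grow, it merely stays bounded below by $h(-\f14)$, which is precisely why only $L^1$ (not $L^2$) control of the large-fluctuation part $g_\i^c$ is available. This is exactly the reason for the splitting $g_\i=g_\i^b+\i g_\i^c$ in \eqref{3m19}: it isolates the near-equilibrium regime $G_\i\approx1$, where $L^2$ control holds, from the tail regime, where only $L^1$ control survives; everything else is bookkeeping with the support properties of $\gamma\in\Upsilon$.
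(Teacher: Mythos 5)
Your proof is correct. The paper itself gives no argument for this lemma --- it simply imports it as Propositions 2.1 and 2.7 of Golse--Saint-Raymond \cite{GL} --- and what you have written is a faithful, self-contained reconstruction of exactly that standard argument: the entropy bound $\int_\T\langle\i^{-2}h(\i g_\i)\rangle\,dx\le C$ extracted from \eqref{m293}--\eqref{m34}, the quadratic lower bound $h(z)\ge\tfrac13 z^2$ on the region where $\supp\gamma$ pins $G_\i$ to $[\tfrac12,\tfrac32]$ (giving the $L^2$ control of $g_\i^b$), and the linear lower bound $h(z)\ge c_0|z|$ away from $[\tfrac34,\tfrac54]$ (giving only $L^1$ control of $g_\i^c$), together with the elementary support bookkeeping $|G_\i-1|\ge\tfrac14$ and $G_\i\le 5|G_\i-1|$ on $\{1-\gamma(G_\i)>0\}$. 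All the pointwise inequalities check out, including the monotonicity of $h(z)/z$ on $(0,\infty)$ and the positivity of $h(-\tfrac14)$, and your closing remark correctly identifies why the tail part $g_\i^c$ cannot be controlled in $L^2$.
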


\bigskip

\section{Implications of the Maxwell Equations}

For the asymptotic behavior of the solutions under the hypothesis
$\mathcal{H}_\i(0)\le C\i^3$, one of the difficulties when we deal
with the magnetic field and the electric field comes from the fact
that the relative entropy does not provide useful information on
the electric field $E_\i$ due to the $\i$ in the front of the
electric field in the definition of the relative entropy
$\mathcal{H}_\i$. Fortunately, the uniform estimate from the
relative entropy is enough to ensure that $\Omega=0$. Indeed, from
\eqref{max2},
$$\i \f{\partial{E_\i}}{\partial t}\rightarrow 0,\quad
\textrm{in}\quad \mathcal{D}'(\R_+\times \R^3).$$ Next, since
$g_\i$ converges to $g$ in $w-L^1_{loc}(dtdx; L^1((1+|\xi|^2)Md\xi
dx))$, by the Cauchy-Schwarz inequality, we can deduce that $g_\i$
converges to $g$ in $w-L^1_{loc}(dtdx; L^1(|\xi|Md\xi dx))$. Due
to the fact $<\xi>=0$, $\f{j_\i}{\i}=<\xi g_\i>$, $\f{j_\i}{\i}$
converges to $j$ in $w-L^1_{loc}(dtdx)$. Then we take the limit as
$\i\rightarrow 0$ in the equation \eqref{me32} to get
\begin{align}\label{max3}
\nabla\times B=j
\end{align}
in the sense of distributions. Furthermore,
\begin{equation*}
\begin{split}
\left\|\f{j_\i}{\i}\right\|_{L^\infty_t(L^2(\T))}&=\left\|<\xi
g_\i>\right\|_{L^\infty_t(L^2(\T))}\\
&\le \|g_\i\|_{L^\infty_T(L^2(Md\xi
dx))}\left<|\xi|^2\right>^{\f{1}{2}}\\
&<\infty.
\end{split}
\end{equation*}
This implies that $\f{j_i}{\i}$ converges weakly$^*$ to $j$ in
$L^\infty_t(L^2(\T))$.

On the other hand, for the electric field $E_\i$, we have
\begin{Lemma}\label{max2l}
The family $\{E_\i\}_{\{\i>0\}}$ formally satisfies
\begin{equation}\label{4m52}
\begin{split}
E_\i&=\partial_t(\i E_\i\times B_\i)-(\nabla\times B_\i)\times
B_\i+\f{j_\i}{\i}\times B_\i+\i\Dv(E_\i\otimes
E_\i)\\&\quad-\i\f{1}{2}\nabla|E_\i|^2-\i E_\i\int_{\R^3}g_\i M
d\xi,
\end{split}
\end{equation}
in the sense of distributions. Hence, $\{E_\i\}_{\i>0}$ is
uniformly bounded in $(W^{1,\infty}_0((0,T)\times\T))'$.
\end{Lemma}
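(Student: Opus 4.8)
The plan is in two movements: first obtain the representation \eqref{4m52} as a \emph{formal} identity by combining the momentum moment of the kinetic equation with the Maxwell subsystem, and then read off the uniform bound term by term from the \textit{a priori} estimates of Sections~3 and~4.

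\emph{Deriving \eqref{4m52}.} The identity is a rewriting of the local conservation of momentum. Multiplying the scaled Vlasov--Maxwell--Boltzmann equation \eqref{m17} by $\xi$ and integrating against $Md\xi$, using that collisions conserve momentum, $\langle\xi\,Q(G_\i,G_\i)\rangle=0$, and integrating by parts in $\xi$ in the Lorentz term (with $\na_\xi M=-\xi M$ and $\Dv_\xi(\xi\times B_\i)=0$, the piece $\i\langle\xi(E_\i\cd\xi)G_\i\rangle$ exactly cancelling the contribution of $-e\i^2E_\i\cd\xi\,G_\i$), one gets a balance law for the current $j_\i=e\langle\xi G_\i\rangle$ in which $\partial_t j_\i$ and the kinetic momentum flux $\Dv_x\langle\xi\otimes\xi\,G_\i\rangle$ are balanced by the Lorentz force $e\i^2E_\i\langle G_\i\rangle+e\i\,(j_\i/e)\times B_\i$. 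One then trades the current and the kinetic flux for field quantities: Ampère's law \eqref{me32} supplies $j_\i/\i=\na\times B_\i-\i\,\partial_tE_\i$; Faraday's law \eqref{me33}, $\partial_tB_\i=-\na\times E_\i$, turns $\i(\partial_tE_\i)\times B_\i$ into $\partial_t(\i E_\i\times B_\i)+\i E_\i\times(\na\times E_\i)$; and the vector identity $E_\i\times(\na\times E_\i)=\f12\na|E_\i|^2-\Dv(E_\i\otimes E_\i)+(\Dv E_\i)E_\i$ together with Gauss's law \eqref{me34} in the (background--subtracted) form $\Dv E_\i=\langle g_\i\rangle$ produces the Maxwell--stress terms $\i\Dv(E_\i\otimes E_\i)-\f\i2\na|E_\i|^2$ and $-\i E_\i\langle g_\i\rangle$. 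Collecting these and absorbing the remaining $O(\i)$ contributions from $\partial_tj_\i$ and from $\langle\xi\otimes\xi\,G_\i\rangle-I$ yields \eqref{4m52}. The word ``formally'' records that the velocity moment of a DiPerna--Lions renormalized solution is not literally defined; as in \cite{BGL,GL,HW} one argues instead with a renormalized/truncated equation, picking up a conservation defect on the right-hand side that is controlled in the same spaces, so that only the structural form \eqref{4m52} is needed afterwards.

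\emph{The uniform bound.} Pair \eqref{4m52} with $\phi\in W^{1,\infty}_0((0,T)\times\T)$ and integrate each derivative by parts onto $\phi$ (no boundary terms, since $\phi$ vanishes at $t=0,T$ and $\T$ has no boundary); it then suffices to bound each resulting integral by $C\|\phi\|_{W^{1,\infty}}$ uniformly in $\i$. For $\partial_t(\i E_\i\times B_\i)$ this gives $\big|\!\int\i E_\i\times B_\i\cd\partial_t\phi\,\big|\le \i^{1/2}\,\|\i^{1/2}E_\i\|_{L^\infty_tL^2}\|B_\i\|_{L^\infty_tL^2}\,T\|\phi\|_{W^{1,\infty}}$, which is even $O(\i^{1/2})$ by the bounds behind \eqref{max1}--\eqref{max2}. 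The term $\f{j_\i}{\i}\times B_\i$ is bounded using $\|j_\i/\i\|_{L^\infty_tL^2}<\infty$ (Section~4) and $\|B_\i\|_{L^\infty_tL^2}<\infty$, and $(\na\times B_\i)\times B_\i$ is handled after writing $\na\times B_\i=\f{j_\i}{\i}+\i\partial_tE_\i$ (Ampère), the first part as above and $\i(\partial_tE_\i)\times B_\i=\partial_t(\i E_\i\times B_\i)+\i E_\i\times(\na\times E_\i)$ being recast, once more by the vector identity, into terms of the types already estimated. Finally $\i\Dv(E_\i\otimes E_\i)$, $\f\i2\na|E_\i|^2$ and $\i E_\i\langle g_\i\rangle$ are bounded by $\|\i^{1/2}E_\i\|_{L^\infty_tL^2}^2\|\na\phi\|_{L^1_tL^\infty}$, by the same, and by $\i^{1/2}\|\i^{1/2}E_\i\|_{L^\infty_tL^2}\|\langle g_\i\rangle\|_{L^\infty_tL^2}\|\phi\|_{L^1_tL^\infty}$ respectively, where $\|\langle g_\i\rangle\|_{L^\infty_tL^2}$ is controlled by $\|g_\i^b\|_{L^\infty_t(L^2(Md\xi dx))}=O(1)$ from Lemma~\ref{ec}. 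Hence $\sup_{\i>0}\|E_\i\|_{(W^{1,\infty}_0((0,T)\times\T))'}<\infty$.

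\emph{The main obstacle.} The delicate point is that the quadratic field terms $(\na\times B_\i)\times B_\i$, $\i\Dv(E_\i\otimes E_\i)$ and $\i\na|E_\i|^2$ are $O(1)$ only \emph{after} the gradient has been transferred to the test function --- pointwise in $\i$ they are not bounded in any $L^p_{t,x}$ --- so \eqref{4m52} is genuinely an identity of distributions and the conclusion is only a negative-order bound; turning it into the strong convergence $E_\i\to E$ in $L^\infty_t(L^2)$ asserted in Theorem~\ref{T1} will still require extra compactness (velocity averaging and the Maxwell structure), supplied later. A second, more technical, difficulty is making the momentum-moment computation rigorous for renormalized rather than smooth solutions, i.e. controlling the conservation defects --- precisely the reason the statement is phrased ``formally''.
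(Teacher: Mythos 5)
Your derivation of the identity \eqref{4m52} goes through the wrong door, and as described it does not actually produce \eqref{4m52}. In the paper the identity is a purely electromagnetic (Maxwell--stress) computation: Gauss's law $\i\Dv E_\i=\r_\i=1+\i\langle g_\i\rangle$ turns $E_\i+\i E_\i\langle g_\i\rangle=E_\i\r_\i$ into $\i E_\i\Dv E_\i$; then crossing Amp\`ere's law \eqref{me32} with $B_\i$ and Faraday's law \eqref{me33} with $\i E_\i$, adding, and applying the identity \eqref{EI} to $E_\i$ converts $\i E_\i\times(\na\times E_\i)$ into the Maxwell-stress terms plus $\i E_\i\Dv E_\i$, which is then solved for. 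The only kinetic input is the normalization $\r_\i=1+\i\langle g_\i\rangle$. Your route through the $\xi$-moment of \eqref{m17} instead yields the \emph{different} identity $E_\i\langle G_\i\rangle=\tfrac{1}{e}\partial_t\langle\xi g_\i\rangle+\tfrac{1}{e\i}\Dv_x\langle\xi\otimes\xi\,g_\i\rangle-\tfrac{j_\i}{\i e}\times B_\i$; the terms $\partial_t\langle\xi g_\i\rangle$ and, especially, $\tfrac{1}{\i}\Dv_x\langle\xi\otimes\xi\,g_\i\rangle$ are not ``$O(\i)$ contributions'' that can be ``absorbed'' --- the latter is $O(1/\i)$ in the natural norms, and in any case an exact identity has no room to absorb anything. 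A further symptom that the route is off: substituting Amp\`ere's law for $j_\i/\i$, as you propose, would eliminate the summand $\f{j_\i}{\i}\times B_\i$, whereas \eqref{4m52} contains both $-(\na\times B_\i)\times B_\i$ and $+\f{j_\i}{\i}\times B_\i$ separately, precisely because Amp\`ere's law is used as a three-term relation crossed with $B_\i$ rather than as a substitution.

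The second half (the uniform $(W^{1,\infty}_0)'$ bound by pairing with test functions and moving derivatives onto them) is essentially the paper's argument and the term-by-term estimates for $\partial_t(\i E_\i\times B_\i)$, $\f{j_\i}{\i}\times B_\i$, $\i\Dv(E_\i\otimes E_\i)$, $\f{\i}{2}\na|E_\i|^2$ and $\i E_\i\langle g_\i\rangle$ are correct. However, your treatment of $(\na\times B_\i)\times B_\i$ is circular: re-inserting Amp\`ere and Faraday and then applying ``the vector identity'' to $\i E_\i\times(\na\times E_\i)$ regenerates $\i E_\i\Dv E_\i=E_\i\r_\i=E_\i+\i E_\i\langle g_\i\rangle$, i.e.\ the very quantity you are trying to bound. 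The correct step --- and the one the paper uses --- is to apply \eqref{EI} to $B_\i$ itself: since $\Dv B_\i=0$, one has $(\na\times B_\i)\times B_\i=\Dv(B_\i\otimes B_\i)-\f{1}{2}\na|B_\i|^2$, which is controlled in $L^\infty_t(L^1)$ by $\|B_\i\|_{L^\infty_t(L^2)}^2$ once the derivatives are moved onto the test function.
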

\begin{proof}
 Indeed, multiplying \eqref{me32} by $B_\i$, multiplying
\eqref{me33} by $\i E_\i$, and adding them together to yield
\begin{equation}\label{4m53}
\begin{split}
\partial_t(\i E_\i\times B_\i)-(\nabla\times B_\i)\times B_\i+\i
E_\i\times (\nabla\times E_\i)=-\f{j_\i}{\i}\times B_\i.
\end{split}
\end{equation}
Note that
\begin{equation}\label{EI}
E\Dv E+(\nabla \times E)\times E=\Dv(E\otimes
E)-\f{1}{2}\nabla|E|^2.
\end{equation}
The identity \eqref{4m53} can be rewritten as, using \eqref{me34}
\begin{equation}\label{4m54}
\begin{split}
E_\i\r_\i&=\i E_\i\Dv E_\i\\&=\partial_t(\i E_\i\times
B_\i)-(\nabla\times B_\i)\times B_\i+\f{j_\i}{\i}\times
B_\i+\i\Dv(E_\i\otimes E_\i)-\i\f{1}{2}\nabla|E_\i|^2.
\end{split}
\end{equation}
Because $$\r_\i=\int_{\R^3}(1+\i g_\i)M d\xi=\int_{\R^3}M
d\xi+\i\int_{\R^3}g_\i M d\xi=1+\i\int_{\R^3}g_\i M d\xi,$$ one
obtains, according to \eqref{4m54},
\begin{equation}\label{4m55}
\begin{split}
E_\i&=\partial_t(\i E_\i\times B_\i)-(\nabla\times B_\i)\times
B_\i+\f{j_\i}{\i}\times B_\i+\i\Dv(E_\i\otimes
E_\i)\\&\quad-\i\f{1}{2}\nabla|E_\i|^2-\i E_\i\int_{\R^3}g_\i M
d\xi.
\end{split}
\end{equation}

Next, due to the uniform bounds
$$\|\sqrt{\i}E_\i\|_{L^\infty(0,T; L^2(\R^3))}\le C,\qquad \|B_\i\|_{L^\infty(0,T; L^2(\R^3))}\le
C,$$ we have $$\partial_t(\i E_\i\times B_\i)\rightarrow 0$$ in
$(W^{1,\infty}((0,T)\times\T))'$ as $\i\rightarrow 0$, and
$-(\nabla\times B_\i)\times B_\i+\f{j_\i}{\i}\times B_\i$ is
uniformly bounded in $(W^{1,\infty}((0,T)\times\T))'$ by using the
identity \eqref{EI} for $B$.

Also, we can control the term $\i E_\i\int_{\R^3}g_\i M d\xi$ as
follows
\begin{equation*}
\begin{split}
&\left\|\i E_\i\int_{\R^3}g_\i M d\xi\right\|_{L^1((0,T)\times
\T)}\\&\quad\le \sqrt{\i}\|\sqrt{\i}E_\i\|_{L^2((0,T)\times
\T)}\left(\int_{\R^3}Md\xi\right)^{\f{1}{2}}\|<g_\i^2>\|^{\f{1}{2}}_{L^1((0,T)\times
\T)}\\
&\quad\le C \sqrt{\i}\rightarrow 0
\end{split}
\end{equation*}
as $\i\rightarrow 0$. Hence, according to \eqref{4m55}, we deduce
that $\{E_\i\}_{\i>0}$ is uniformly bounded in
$(W^{1,\infty}_0((0,T)\times\T))'$.
\end{proof}

As a direct consequence of Lemma \ref{max2l}, we have
\begin{Lemma}\label{maxl}
$E_\i\rightarrow E\quad\textrm{weakly in}\quad (W^{2,p}_0)'$, for
some function $E\in (W^{2,p}_0)'$ with $p>4$, and $(E, B)$
satisfies
\begin{equation}\label{max4}
\partial_t B+\nabla\times E=0
\end{equation} in $(W^{2,p}_0)'$.
\end{Lemma}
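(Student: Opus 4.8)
The plan is to promote the uniform bound of Lemma \ref{max2l} into genuine weak sequential compactness and then to pass to the limit in Faraday's law \eqref{me33}. The first ingredient is a Sobolev embedding on the four-dimensional space-time cylinder $(0,T)\times\T$: by Morrey's inequality one has the continuous (indeed compact) embedding $W^{2,p}_0((0,T)\times\T)\hookrightarrow W^{1,\infty}_0((0,T)\times\T)$ precisely when $p>4$. Taking adjoints, $(W^{1,\infty}_0((0,T)\times\T))'\hookrightarrow (W^{2,p}_0)'$ continuously, so the family $\{E_\i\}_{\i>0}$, which Lemma \ref{max2l} bounds in $(W^{1,\infty}_0((0,T)\times\T))'$, is also bounded in $(W^{2,p}_0)'$.

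Next, for $1<p<\infty$ the space $W^{2,p}_0$ is a closed subspace of the reflexive separable space $W^{2,p}$, hence is itself reflexive and separable, and so is its dual. A bounded sequence in a reflexive Banach space is weakly sequentially precompact, so there exist $\i_n\to 0$ and $E\in (W^{2,p}_0)'$ with $E_{\i_n}\rightharpoonup E$ weakly in $(W^{2,p}_0)'$; this is the asserted convergence (along a subsequence, as usual).

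It then remains to let $\i\to 0$ in $\partial_t B_\i+\nabla\times E_\i=0$. Testing against $\chi\in C_0^\infty((0,T)\times\T;\R^3)\subset W^{2,p}_0$, weak convergence in $(W^{2,p}_0)'$ gives $\langle E_{\i_n},\nabla\times\chi\rangle\to\langle E,\nabla\times\chi\rangle$, while \eqref{max1} gives $\langle B_{\i_n},\partial_t\chi\rangle\to\langle B,\partial_t\chi\rangle$; hence $\partial_t B+\nabla\times E=0$ in $\mathcal D'$. Since $B\in L^\infty_t(L^2(dx))$ and $E\in (W^{2,p}_0)'$, each term of this identity extends to a bounded functional on $W^{2,p}_0$ (for $\nabla\times E$ use the equation itself, $\nabla\times E=-\partial_t B$), so by density of $C_0^\infty$ in $W^{2,p}_0$ the relation holds in $(W^{2,p}_0)'$, as claimed.

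The only genuine obstacle is the one this argument is designed to circumvent: $W^{1,\infty}_0$ is not separable, so the bound in $(W^{1,\infty}_0)'$ coming from Lemma \ref{max2l} does not by itself produce a weak-$*$ convergent subsequence, and one cannot pass to the limit directly in that space. Interposing the reflexive space $(W^{2,p}_0)'$ — which is exactly what forces the restriction $p>4$, via the dimension-four Morrey embedding — repairs this, after which the extraction and the limit passage in \eqref{me33} are routine.
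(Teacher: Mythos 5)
Your argument is correct and follows essentially the same route as the paper: the bound in $(W^{1,\infty}_0((0,T)\times\T))'$ from Lemma \ref{max2l} is upgraded via the four-dimensional Sobolev/Morrey embedding $W^{2,p}_0\hookrightarrow W^{1,\infty}_0$ for $p>4$ to a bound in the reflexive space $(W^{2,p}_0)'$, from which a weakly convergent subsequence is extracted, and the limit in Faraday's law \eqref{me33} is then taken against smooth test functions using the weak convergences of $E_\i$ and $B_\i$. Your added remarks on the non-separability of $W^{1,\infty}_0$ and the density argument for upgrading the identity from $\mathcal{D}'$ to $(W^{2,p}_0)'$ are sound clarifications of points the paper leaves implicit.
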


\begin{proof}
Indeed, the uniform bound on $E_\i$ in $(W^{1,\infty}_0)'$ and the Sobolev embedding
$$W^{2,p}_0((0,T)\times\T)\hookrightarrow W^{1,\infty}_0((0,T)\times\T)$$
 for any $4<p<\infty$ imply that $E_\i$ is uniformly bounded in $(W^{2,p}_0((0,T)\times\T))'$ and
hence is weakly convergent in $(W^{2,p}_0((0,T)\times\T))'$ since
$(W^{2,p}_0((0,T)\times\T))'$ with $4<p<\infty$ is a reflexive
space.

Next, since
$$\f{\partial B_\i}{\partial t}+\nabla\times E_\i=0,$$ holds in
$\mathcal{D}'(\R_+\times\T)$, we take an arbitrarily test function
$\phi\in C_0^\infty(\R_+\times \R^3)$ to obtain
\begin{equation}\label{max5}
-\int_0^t\int_{\T} B_\i \cdot\f{\partial \phi}{\partial t}
dxds+\int_0^t\int_{\T} E_\i \cdot\nabla\times \phi dxds=0.
\end{equation}
Hence, from \eqref{max5}, we obtain
\begin{equation}\label{max6}
\begin{split}
\int_0^t\int_{\T} E\cdot\nabla\times \phi dxds
&=\lim_{\i\rightarrow 0}\int_0^t\int_{\T} E_\i\cdot\nabla\times \phi dxds
=\lim_{\i\rightarrow 0}\int_0^t\int_{\T} B_\i \cdot\f{\partial \phi}{\partial t} dxds\\
&= \int_0^t\int_{\T} B \cdot\f{\partial \phi}{\partial t} dxds
=-\int_0^t\int_{\T} \phi \cdot\f{\partial B}{\partial t} dxds
\end{split}
\end{equation}
Hence, from \eqref{max6}, we deduce that the limits $(E,B)$
satisfy \eqref{max4}.
\end{proof}

Observe that, since $E_\i$ is convergent at least in the sense of
distributions, we can conclude that $\Omega=0$.

\bigskip

\section{Vanishing of Conservation Defects}

Before stating the main result of the present section, we
introduce a new class of bump functions as in \cite{GL}. For each
$C>0$, set
$$\Upsilon_C=\left\{\gamma\in\Upsilon:\; \|\gamma'\|_{L^\infty}\le C\right\}.$$
Consider the transformation $\mathfrak{T}$ defined by
$\mathfrak{T}\gamma=1-(1-\gamma)^2$; clearly $\mathfrak{T}$ maps
$\Upsilon_C$ into $\Upsilon_{2C}$. Define
\begin{equation}\label{4m1}
\tilde{\Upsilon}=\mathfrak{T}\Upsilon_8\subset\Upsilon_{16},
\end{equation}
and notice that $\tilde{\Upsilon}\neq \emptyset$ since
$\Upsilon_8\neq \emptyset$. For each $\gamma\in\tilde{\Upsilon}$,
define
\begin{equation}\label{4m2}
\hat{\gamma}(z)=\gamma(z)+(z-1)\f{d\gamma}{dz}.
\end{equation}
Notice that
\begin{equation}\label{4m3}
\textrm{supp}\hat{\gamma}\subset\left[\f{1}{2},\f{3}{2}\right],\qquad\hat{\gamma}\left(\left[\f{3}{4},\f{5}{4}\right]\right)=\{1\}.
\end{equation}
On the other hand, let $\tilde{\gamma}\in\Upsilon_8$ be such that
$\gamma=\mathfrak{T}\tilde{\gamma}$. One has
$$1-\hat{\gamma}(z)=(1-\tilde{\gamma})\left[(1-\tilde{\gamma})-2(z-1)\f{d\tilde{\gamma}}{dz}\right],\qquad
z\ge 0$$ so that
\begin{equation}\label{4m4}
|1-\hat{\gamma}|\le 9(1-\tilde{\gamma}),\qquad z\ge 0.
\end{equation}

\begin{Theorem}[{\bf Vanishing of conservation defects}] \label{vcd} Let
$\gamma\in\tilde{\Upsilon}$, and denote by $\eta\equiv \eta(\xi)$
any collision invariant (i.e. $\eta(\xi)=1\quad\textrm{or}\quad
\eta(\xi)=\xi_1,...,\xi_3\quad\textrm{or else}\quad
\eta(\xi)=|\xi|^2$) or any linear combination thereof.  Then
\begin{equation}\label{4m5}
\partial_t\left<\eta g_\i^b\right>+\f{1}{\i}\nabla_x\cdot\left<\xi\eta
g_\i^b\right>+e B_\i\cdot\left<\xi\times\nabla_\xi\eta
g_\i^b\right>-eE_\i\cdot\left<\xi\eta\right>\rightarrow 0,
\end{equation}
in $L^1_{loc}(\R_+\times \T)$ as $\i\rightarrow 0$.
\end{Theorem}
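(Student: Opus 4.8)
The plan is to derive an \emph{exact} renormalized moment identity for the truncated fluctuation $g_\i^b$, to identify the ``conservation defect'' as the difference between the two sides of that identity and the claimed limiting combination, and then to show this defect tends to $0$; the collision part of the defect is the only genuinely delicate ingredient, and it is handled exactly as in the Navier--Stokes--Fourier analysis of \cite{GL}.

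\textbf{Step 1 (the renormalized moment identity).} I would start from the DiPerna--Lions weak formulation \eqref{m294} (equivalently the $G$-version of \eqref{me31}) with the admissible renormalization $\Gamma(z)=(z-1)\gamma(z)$: since $\gamma\in\tilde\Upsilon\subset\Upsilon_{16}$ is supported in $[\tfrac12,\tfrac32]$ we have $\Gamma(0)=0$, $\Gamma'(z)=\hat\gamma(z)$ with $\hat\gamma$ as in \eqref{4m2}, and $(1+z)\hat\gamma(z)$ bounded, so \eqref{m292} holds, while by construction $\tfrac1\i\Gamma(G_\i)=g_\i^b$. Testing against $\chi(t,x,\xi)=\phi(t,x)\,\eta(\xi)\,\zeta_R(\xi)$ with a smooth cutoff $\zeta_R$ in $\xi$, integrating by parts in $\xi$ (using $\nabla_\xi M=-\xi M$, $\nabla_\xi\cdot(\xi\times B_\i)=0$ and $\xi\cdot(\xi\times B_\i)=0$, from which all signs below are read off), and letting $R\to\infty$ --- the a priori bounds of Section~3 and Lemma \ref{ec} supply the moment integrability, e.g.\ of $\langle(1+|\xi|^3)|g_\i^b|\rangle$, needed to remove the cutoff --- I obtain, in $\mathcal D'(\R_+\times\T)$, the identity
\[
\partial_t\langle\eta g_\i^b\rangle+\tfrac1\i\nabla_x\!\cdot\!\langle\xi\eta g_\i^b\rangle+eB_\i\!\cdot\!\langle g_\i^b(\xi\times\nabla_\xi\eta)\rangle-eE_\i\!\cdot\!\langle\xi\eta\rangle=\mathcal D_\i ,
\]
with
\[
\mathcal D_\i=\tfrac1{\i^3}\langle\eta\,\hat\gamma(G_\i)\,Q(G_\i,G_\i)\rangle+e\i E_\i\!\cdot\!\langle g_\i^b\nabla_\xi\eta\rangle-e\i E_\i\!\cdot\!\langle\xi\eta g_\i^b\rangle+eE_\i\!\cdot\!\langle\xi\eta(G_\i\hat\gamma(G_\i)-1)\rangle .
\]
It then remains to prove $\mathcal D_\i\to0$ in $L^1_{loc}(\R_+\times\T)$.

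\textbf{Step 2 (the electromagnetic corrections).} By Lemma \ref{ec}, $g_\i^b=O(1)$ in $L^\infty_t(L^2(M\,d\xi\,dx))$, and $\xi\times\nabla_\xi\eta$, $\nabla_\xi\eta$, $\xi\eta$ are polynomials of degree at most $3$ (hence in $L^2(M\,d\xi)$), so the moments $\langle g_\i^b\nabla_\xi\eta\rangle$ and $\langle\xi\eta g_\i^b\rangle$ are $O(1)$ in $L^\infty_t(L^2(dx))$; since $\|\i E_\i\|_{L^\infty_t(L^2(dx))}=\sqrt\i\,\|\sqrt\i E_\i\|_{L^\infty_t(L^2(dx))}=O(\sqrt\i)$ by the entropy bound \eqref{3m2}, the second and third terms of $\mathcal D_\i$ tend to $0$ in $L^\infty_t(L^1(dx))$, hence in $L^1_{loc}$. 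For the last term I would write $G_\i\hat\gamma(G_\i)-1=-G_\i(1-\hat\gamma(G_\i))+(G_\i-1)$ and use \eqref{4m4} together with the third item of Lemma \ref{ec} applied to $\tilde\gamma\in\Upsilon_8$ (where $\gamma=\mathfrak T\tilde\gamma$, and $\tilde g_\i^c:=\i^{-2}(G_\i-1)(1-\tilde\gamma(G_\i))$ is the analogue of $g_\i^c$ built from $\tilde\gamma$): $G_\i|1-\hat\gamma(G_\i)|\le 9\,G_\i(1-\tilde\gamma(G_\i))\le 45\,\i^2|\tilde g_\i^c|$, together with $|G_\i-1|\le\i|g_\i^b|+\i^2|g_\i^c|$; by the a priori moment bounds of Section~3 (using the decomposition $g_\i=g_\i^b+\i g_\i^c$ and the control of $g_\i^c,\tilde g_\i^c$), the coefficient $\langle\xi\eta(G_\i\hat\gamma(G_\i)-1)\rangle$ is $O(\i)$ in $L^\infty_t(L^2(dx))$ up to a lower-order remainder, so its product with $E_\i$ --- which is $O(\i^{-1/2})$ in $L^\infty_t(L^2(dx))$ and bounded in $(W^{1,\infty}_0)'$ by Lemma \ref{max2l} --- still tends to $0$ in $L^1_{loc}$.

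\textbf{Step 3 (the collision defect --- the main obstacle).} Since $\eta$ is a collision invariant, $\eta+\eta_*-\eta'-\eta'_*=0$, and the $d\mathcal M$-symmetries give $\langle\eta\,Q(G_\i,G_\i)\rangle=0$; therefore, with $q_\i$ as in \eqref{b4},
\[
\tfrac1{\i^3}\langle\eta\,\hat\gamma(G_\i)\,Q(G_\i,G_\i)\rangle=\tfrac1{\i^3}\langle\eta\,(\hat\gamma(G_\i)-1)\,Q(G_\i,G_\i)\rangle=\tfrac1\i\,\langle\!\langle\eta\,(\hat\gamma(G_\i)-1)\,q_\i\rangle\!\rangle .
\]
Using $|\hat\gamma(G_\i)-1|\le 9(1-\tilde\gamma(G_\i))$ (from \eqref{4m4}) and the entropy control $\tfrac1\i(1-\tilde\gamma(G_\i))\le 2|\tilde g_\i^c|^{1/2}$ (second item of Lemma \ref{ec}), this is dominated pointwise by $18\,\langle\!\langle|\eta|\,|\tilde g_\i^c|^{1/2}\,|q_\i|\rangle\!\rangle$. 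Writing $|q_\i|=|q_\i|(G'_\i G'_{\i*}+G_\i G_{\i*})^{-1/2}\cdot(G'_\i G'_{\i*}+G_\i G_{\i*})^{1/2}$ and applying Cauchy--Schwarz in $(\xi,\xi_*,\omega,x,t)$ bounds $\int_0^T\!\!\int_\T|\tfrac1{\i^3}\langle\eta\hat\gamma(G_\i)Q(G_\i,G_\i)\rangle|$ by $18\,A_\i^{1/2}B_\i^{1/2}$, with $B_\i=\int_0^T\!\!\int_\T\langle\!\langle q_\i^2/(G'_\i G'_{\i*}+G_\i G_{\i*})\rangle\!\rangle$ and $A_\i=\int_0^T\!\!\int_\T\langle\!\langle|\eta|^2|\tilde g_\i^c|(G'_\i G'_{\i*}+G_\i G_{\i*})\rangle\!\rangle$. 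The factor $B_\i$ is bounded uniformly in $\i$: from $\mathcal H_\i(t)+\tfrac1\i\int_0^t\mathcal R(G_\i)\le\mathcal H_\i(0)\le C\i^3$ and the elementary inequality $(\ln a-\ln b)(a-b)\ge c\,(a-b)^2/(a+b)$ one gets $\int_0^T\!\!\int_\T\langle\!\langle q_\i^2/(G'_\i G'_{\i*}+G_\i G_{\i*})\rangle\!\rangle\le C$. The real work is to show $A_\i\to0$: here I would transfer the primed weights onto the $\xi$-variable by the collisional $d\mathcal M$-symmetry, invoke the third item of Lemma \ref{ec} ($(1-\tilde\gamma(G_\i))G_\i\le 5\i^2|\tilde g_\i^c|$, which on $\supp(1-\tilde\gamma(G_\i))$ forces $G_\i\le 5|G_\i-1|$), hypotheses (H0)--(H1) (controlling $A(\xi)=\int_{S^2}b(\xi,\omega)\,d\omega$ and hence the weight $\int G_{\i*}b\,d\omega M_*\,d\xi_*$), and the a priori moment estimates of Section~3, exploiting that $(1-\tilde\gamma(G_\i))$ is supported on $\{G_\i\ge\tfrac32\}\cup\{G_\i\le\tfrac12\}\subset\{|g_\i|\ge\i^{-1}\}$, a set whose measure is $O(\i^2)$ by the $L^1$-bound on $g_\i^c$ in Lemma \ref{ec}. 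This gives $\tfrac1{\i^3}\langle\eta\,\hat\gamma(G_\i)Q(G_\i,G_\i)\rangle\to0$ in $L^1_{loc}(\R_+\times\T)$, and combined with Step~2 yields $\mathcal D_\i\to0$, which is the assertion. The delicate point, where the bump-function calculus of Section~4 and the sharp entropy-dissipation bounds are essential, is precisely the claim $A_\i\to0$.
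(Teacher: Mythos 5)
Your proposal follows essentially the same route as the paper: both start from the renormalized formulation with $\Gamma(z)=(z-1)\gamma(z)$, integrate by parts in $\xi$ to produce the moment identity, isolate the same defect terms (the collision term $\tfrac1\i\langle\!\langle\eta\,\hat\gamma_\i q_\i\rangle\!\rangle$, the two $O(\sqrt\i)$ electric-field corrections, and $eE_\i\cdot\langle\xi\eta(G_\i\hat\gamma_\i-1)\rangle$, which the paper splits into exactly your $I_1+I_2$ via \eqref{4m4} and Lemma \ref{ec}), and kill them with the entropy and entropy-dissipation bounds. The only difference is that the paper disposes of the collision defect by direct citation of Proposition 4.1 of \cite{GL}, whereas you sketch that argument; your sketch is consistent with the cited one, so nothing is missing relative to the paper's own level of detail.
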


\begin{proof}
We begin with the renormalized form \eqref{m294} of the
Vlasov-Maxwell-Boltzmann equations \eqref{me3} with
$\Gamma(z)=(z-1)\gamma(z)$
\begin{equation}\label{4m6}
\begin{split}
&\left(\partial_t+\f{1}{\i}\xi\cdot\nabla_x\right)(M g_\i^b)+e(\i
E_\i+\xi\times B_\i)\cdot\nabla_\xi(M g_\i^b)\\&\quad+e\i
E_\i\cdot\xi M g_\i^b
-e\left(\gamma(G_\i)+(G_\i-1)\f{d\gamma}{dz}(G_\i)\right)E_\i\cdot\xi F_\i \\
&=\f{1}{\i^3}\int_{S^2}\int_{\R^3}({F'}_\i {F'}_{\i *}-F_\i F_{\i*})\left(\gamma(G_\i)+(G_\i-1)\f{d\gamma}{dz}(G_\i)\right)b
d\omega M_*d\xi_*.
\end{split}
\end{equation}
Here, we used the decomposition \eqref{3m19}.
From \eqref{4m6}, we deduce that
\begin{equation}\label{4m9}
\begin{split}
&\partial_t\left<\eta
g_\i^b\right>+\f{1}{\i}\nabla_x\cdot\left<\xi\eta
g_\i^b\right>+e\int_{\R^3}(\i E_\i+\xi\times
B_\i)\cdot\nabla_\xi(Mg_\i^b)\eta d\xi\\&\quad+e\int_{\R^3}\i
E_\i\cdot\xi M g_\i^b \eta d\xi-e\int_{\R^3}\hat{\gamma}_\i
E_\i\cdot\xi F_\i \eta d\xi\\&\quad=\f{1}{\i}\ll
q_\i\hat{\gamma}_\i \eta\gg,
\end{split}
\end{equation}
where
$$\hat{\gamma}_\i=\hat{\gamma}(G_\i),$$
and the function $\hat{\gamma}$ is defined in terms of $\gamma$ by
\eqref{4m2}

Observing that
$$(X\times Y)\cdot Z=Y\cdot(Z\times X)=X\cdot(Y\times Z),$$
we have
\begin{equation}\label{4m10}
\begin{split}
&\int_{\R^3}(\i E_\i+\xi\times B_\i)\cdot\nabla_\xi(Mg_\i^b)\eta d\xi\\
&=-\left(\i E_\i\cdot\left<\nabla_\xi\eta
g_\i^b\right>+\int_{\R^3}(\xi\times B_\i)\cdot\nabla_\xi\eta g_\i^b Md\xi\right)\\
&=-\left(\i E_\i\cdot\left<\nabla_\xi\eta
g_\i^b\right>-B_\i\cdot\left<\xi\times\nabla_\xi\eta
g_\i^b\right>\right).
\end{split}
\end{equation}
Notice that following the same line of the argument of Proposition
4.1 in \cite{GL}, it can be shown that
\begin{equation}\label{4m8}
\left\|\f{1}{\i}\ll q_\i\hat{\gamma}_\i
\eta\gg\right\|_{L^1_{loc}(\R_+\times\T)}\rightarrow 0
\end{equation}
as $\i\rightarrow 0$.

In order to estimate the $L^1$-norm of the conservation defects,
for the last two terms on the left-hand side of \eqref{4m9}, we
claim
\begin{equation}\label{4m71}
\left\|\i E_\i\cdot\left<\nabla_\xi\eta
g_\i^b\right>\right\|_{L^1_{loc}(\R_+\times\T)}\rightarrow 0;
\end{equation}
\begin{equation}\label{4m7}
\left\|\int_{\R^3}\i E_\i\cdot\xi M g_\i^b \eta
d\xi\right\|_{L^1_{loc}(\R_+\times\T)}\rightarrow 0;
\end{equation}
and
\begin{equation}\label{4m711}
\left\|\int_{\R^3}\hat{\gamma}_\i E_\i\cdot\xi F_\i \eta
d\xi-\int_{\R^3} E_\i\cdot\xi\eta M d\xi
\right\|_{L^1_{loc}(\R_+\times\T)}\rightarrow 0
\end{equation}
as $\i\rightarrow 0$.
Indeed, using the elementary bounds
\begin{equation}
|\hat{\gamma}_\i|\le 9,\qquad |1-\hat{\gamma}_\i|\le 9,\qquad 0\le
G_\i|\hat{\gamma}_\i|\le \f{27}{2},
\end{equation}
for the inequality \eqref{4m71}, we have,
\begin{equation*}
\begin{split}
\left|\left<\nabla_\xi\eta
g_\i^b\right>\right|\le\left(\int_{\R^3}(\nabla_\xi\eta)^2Md\xi
\right)^{\f{1}{2}}\left(\int_{\R^3}(g_\i^b)^2
Md\xi\right)^{\f{1}{2}}\le C\left(\int_{\R^3}(g_\i^b)^2
Md\xi\right)^{\f{1}{2}},
\end{split}
\end{equation*}
since $$\int_{\R^3}(\nabla_\xi\eta)^2 Md\xi\le C$$ for all
$\eta\in N(L)$ and where $C$ is a positive constant. Hence, by the
Cauchy-Schwarz inequality and the first statement in Lemma
\ref{ec}, one has
\begin{equation*}
\begin{split}
\left\|\i E_\i\cdot\left<\nabla_\xi\eta
g_\i^b\right>\right\|_{L^1_{loc}(\R_+\times\T)}&\le C\left\|\i
|E_\i|\left(\int_{\R^3}(g_\i^b)^2
Md\xi\right)^{\f{1}{2}}\right\|_{L^1_{loc}(\R_+\times\T)}\\
&\le
C\i^{\f{1}{2}}\|\i^{\f{1}{2}}E_\i\|_{L^\infty_t(L^2(\T))}^{\f{1}{2}}\|g_\i^b\|_{L^\infty_t(L^2(Mdxd\xi))}^{\f{1}{2}}\\
&\le C\i^{\f{1}{2}}\rightarrow 0,
\end{split}
\end{equation*}
as $\i\rightarrow 0$.
Similarly, for the inequality \eqref{4m7}, we have,
\begin{equation*}
\begin{split}
\left|\int_{\R^3}\xi\eta g_\i^b
Md\xi\right|\le\left(\int_{\R^3}(\xi\eta)^2Md\xi
>\right)^{\f{1}{2}}\left(\int_{\R^3}(g_\i^b)^2 Md\xi\right)^{\f{1}{2}}\le C\left(\int_{\R^3}(g_\i^b)^2
Md\xi\right)^{\f{1}{2}},
\end{split}
\end{equation*}
since $$\int_{\R^3}(\xi\eta)^2 Md\xi\le C$$ for all $\eta\in N(L)$
, where $C$ is a positive constant. Hence, by the Cauchy-Schwarz
inequality and the first statement in Lemma \ref{ec}, one has
\begin{equation*}
\begin{split}
\left\|\int_{\R^3}\i E_\i\cdot\xi\eta g_\i^b Md\xi
\right\|_{L^1_{loc}(\R_+\times\T)}&\le C\left\|\i
|E_\i|\left(\int_{\R^3}(g_\i^b)^2
Md\xi\right)^{\f{1}{2}}\right\|_{L^1_{loc}(\R_+\times\T)}\\
&\le
C\i^{\f{1}{2}}\|\i^{\f{1}{2}}E_\i\|_{L^\infty_t(L^2(\T))}^{\f{1}{2}}\|g_\i^b\|_{L^\infty_t(L^2(Mdxd\xi))}^{\f{1}{2}}\\
&\le C\i^{\f{1}{2}}\rightarrow 0,
\end{split}
\end{equation*}
as $\i\rightarrow 0$.

It remains to deal with \eqref{4m711}. To this end, we rewrite
\begin{equation}\label{4m7111}
\begin{split}
\int_{\R^3}\hat{\gamma}_\i E_\i\cdot\xi F_\i \eta
d\xi-\int_{\R^3}E_\i\cdot\xi\eta
Md\xi&=\int_{\R^3}(\hat{\gamma}_\i-1) E_\i\cdot\xi F_\i \eta
d\xi+\i\int_{\R^3}E_\i\cdot\xi \eta g_\i Md\xi\\
&=I_1+I_2.
\end{split}
\end{equation}
Notice that from \eqref{4m4}, we have
\begin{equation*}
\begin{split}
\left|\hat{\gamma}_\i-1\right|\le 9(1-\tilde{\gamma}(G_\i))\le
9(1-\tilde{\gamma}(G_\i))^{\f{1}{2}}
\end{split}
\end{equation*}
for some $\tilde{\gamma}\in \Upsilon_8$ and hence we can control
$I_1$ as, using $F_\i=MG_\i$, Lemma \ref{ec} and the fact $0\le
1-\tilde{\gamma}(G_\i)\le 1$,
\begin{equation}\label{4m7112}
\begin{split}
\|I_1\|_{L^1_{loc}(\R_+\times\T)}&\le
9\left\|\int_{\R^3}|E_\i||\xi\eta|
|1-\tilde{\gamma}(G_\i)|^{\f{1}{2}}G_\i
Md\xi\right\|_{L^1_{loc}(\R_+\times\T)}\\
&\le 9\left\|\int_{\R^3}|E_\i||\xi\eta|
|1-\tilde{\gamma}(G_\i)|^{\f{1}{2}}
Md\xi\right\|_{L^1_{loc}(\R_+\times\T)}\\
&\quad+9\i\left\|\int_{\R^3}|E_\i||\xi\eta|
|1-\tilde{\gamma}(G_\i)|^{\f{1}{2}}g_\i
Md\xi\right\|_{L^1_{loc}(\R_+\times\T)}\\
&\le 18\sqrt{\i}\left\|\sqrt{\i}\int_{\R^3}|E_\i||\xi\eta|
|g_\i^c|^{\f{1}{2}}
Md\xi\right\|_{L^1_{loc}(\R_+\times\T)}\\
&\quad +9\i\left\|\int_{\R^3}|E_\i||\xi\eta| g_\i
Md\xi\right\|_{L^1_{loc}(\R_+\times\T)}\\
&\le
18\sqrt{\i}\|\sqrt{\i}E_\i\|_{L^2_{loc}(\R_+\times\T)}\left<|\xi\eta|^2\right>^{\f{1}{2}}\left\|
|g_\i^c|\right\|_{L^1_{loc}(\R_+\times\T;
L^1(Md\xi))}^{\f{1}{2}}\\
&\quad
+9\i\|E_\i\|_{L^2_{loc}(\R_+\times\T)}\left<|\xi\eta|^2\right>^{\f{1}{2}}\left\|
g_\i\right\|_{L^2_{loc}(\R_+\times\T;
L^2(Md\xi))}\\
&\le C\sqrt{\i}+C\i\rightarrow 0
\end{split}
\end{equation}
as $\i\rightarrow 0$.
For $I_2$, we have
\begin{equation}\label{4m7113}
\begin{split}
\|I_2\|_{L^1_{loc}(\R_+\times\T)}&\le
\sqrt{\i}\|\sqrt{\i}E_\i\|_{L^2_{loc}(\R_+\times\T)}\left<|\xi\eta|\right>^{\f{1}{2}}\|g_\i\|_{L^2_{loc}(\R_+;
L^2(Md\xi dx))}\\
&\le C\i\rightarrow 0
\end{split}
\end{equation}
as $\i\rightarrow 0$.
Adding \eqref{4m7111}, \eqref{4m7112} and \eqref{4m7113} together
gives \eqref{4m711}.
Combining \eqref{4m9}--\eqref{4m711}, the proof of \eqref{4m5} is
finished.
\end{proof}

\begin{Remark}
According to Theorem \ref{vcd}, if $\eta=1$ or $\eta=|\xi|^2$,
then the last term on the left hand side of \eqref{4m5} will
vanish; that is,
$$E_\i\cdot\left<\xi\right>=E_\i\cdot\left<\xi|\xi|^2\right>=0,$$
because
$$\left<\xi\right>=\left<\xi|\xi|^2\right>=0.$$ This implies that the term
$E_\i\cdot<\xi\eta>$ will only possibly appear in the conservation
law of momentum. Hence,
\begin{equation}\label{4m51}
\partial_t\left<g_\i^b \xi_k\right>+\f{1}{\i}\nabla_x\cdot\left<\xi \xi_k
g_\i^b\right>+ eB_\i\cdot\left<\xi\times\nabla_\xi\xi_k
g_\i^b\right>-\alpha e (E_\i)_k\rightarrow 0,
\end{equation}
in $L^1_{loc}(\R_+\times\T)$ for all $1\le k\le 3$, since
$\left<\xi^2_k\right>=\alpha=\f{1}{3}\left<|\xi|^2\right>$.
\end{Remark}

\bigskip

\section{Proof of the Main Result: Theorem \ref{T1}}

In this section, we will finish the proof of Theorem \ref{T1} via
three steps.

\subsection{The Incompressibility and Boussinesq Relations}

Let us start with considering the renormalized form of the first
equation in \eqref{me3}:
\begin{equation}\label{4m001}
\i\partial_t h_\i+\xi\cdot\nabla_x h_\i+e\i\left(\i E_\i+\xi\times
B_\i\right)\cdot\nabla_\xi h_\i -e\i E_\i\cdot\xi \f{G_\i}{ N_\i}
=\f{1}{\i^2}\f{1}{N_\i}Q(G_\i, G_\i),
\end{equation}
where
$$h_\i=\f{3}{\i}\ln \left(1+\f{1}{3}\i g_\i\right)=\f{3}{\i}\ln N_\i.$$ Since
$h_\i$ formally behaves like $g_\i$ for small $\i$, it should be
thought of as the normalized form of the fluctuations $g_\i$. This
means that,  for every $\chi\in C^1(\mathcal{T}; L^\infty(Md\xi))$
and every $0\le s\le t<\infty$, one has,
\begin{equation}\label{4m002}
\begin{split}
&\i\int_\T <h_\i(t)\chi>dx-\i\int_\T <h_\i(s)\chi>dx-\int_s^t\int_\T <h_\i\xi\cdot\nabla_x\chi>dxd\tau\\
&\quad+e\int_s^t\int_\T \i^2 E_\i\cdot<\xi
h_\i\chi>dxd\tau-e\int_s^t\int_\T\int_{\R^3} \i(\i E_\i+\xi\times
B_\i)\cdot \nabla_\xi \chi h_\i Md\xi dxd\tau\\
&\quad-e\int_s^t\int_\T\i E_\i\cdot\left<\xi \f{G_\i}{
N_\i}\right>dxd\tau
\\&=\int_s^t\int_\T\left\langle\!\!\!\left\langle\f{q_\i}{N_\i}\chi\right\rangle\!\!\!\right\rangle dxd\tau.
\end{split}
\end{equation}
Due to the fact $$\f{G_\i}{N_\i}\le 3$$ and the entropy control
$$\|\i^{\f{1}{2}} E_\i\|_{L^\infty_t(L^2(dx))}\le C,$$ one obtains
$$\int_s^t\int_\T\i
E_\i\cdot\left<\xi \f{G_\i}{N_\i}\right>dxd\tau\rightarrow 0,$$ as
$\i\rightarrow 0$. On the other hand, since as stated in the last
statement of Theorem \ref{bgl} (cf. also Corollary 3.2 in
\cite{BGL}) that $h_\i$ has the same limit $g$ as the sequence
$g_\i$ in $w-L^2_{loc}(dt; w_L^2(Md\xi dx))$, one deduces that
$$\i\int_\T \left<h_\i(t)\chi\right>dx-\i\int_\T \left<h_\i(s)\chi\right>dx\rightarrow
0;$$
$$\int_s^t\int_\T \i^2 E_\i\cdot\left<\xi
h_\i\chi\right>dxd\tau\rightarrow 0;$$ and
$$\int_s^t\int_\T\int_{\R^3} \i(\i E_\i+\xi\times
B_\i)\cdot \nabla_\xi \chi h_\i Md\xi dxd\tau\rightarrow 0,$$ as
$\i\rightarrow 0$, thanks to the uniform bounds
$$\|\i^{\f{1}{2}} E_\i\|_{L^\infty_{\R_+}(L^2(\T))}\le C,\qquad  \|B_\i\|_{L^\infty_{\R_+}(L^2(\T))}\le
C.$$ Taking the limit in \eqref{4m002} as $\i$ tends to zero while
using Theorem \ref{bgl} to establish the limits of the terms
involving $h_\i$ and $q_\i$ respectively yields
$$-\int_s^t\int_\T
\left<g\xi\cdot\nabla_x\chi\right>dxd\tau=\int_s^t\int_\T\left\langle\!\left\langle
q\chi\right\rangle\!\right\rangle dxd\tau;$$ hence, the limiting
form of \eqref{4m001} is
\begin{equation}\label{4m003}
\xi\cdot\nabla_x g=\int\int q b(\xi_*-\xi, \omega)d\omega
M_*d\xi_*.
\end{equation}

Since $q$ is in $L^2(d\mathcal{M} dx)$, then for every
$\eta=\eta(\xi)$ in $L^2(d\mathcal{M})$, an application of the
Cauchy-Schwarz inequality shows that $\left\langle\!\left\langle
\eta q\right\rangle\!\right\rangle$ is in $L^2(dx)$. By a repeated
application of the $d\mathcal{M}$-symmetries in Theorem \ref{bgl},
one has that,  for any $\eta$ in $L^2(d\mathcal{M})$,
\begin{equation}\label{4m01}
\left\langle\!\left\langle \eta
q\right\rangle\!\right\rangle=\f{1}{4}\left\langle\!\!\left\langle
(\eta+\eta_*-\eta'_*-\eta') q\right\rangle\!\!\right\rangle.
\end{equation}
Successively apply the identity \eqref{4m01} for $\eta=1, \xi,
\f{1}{2}|\xi|^2$ and use the microscopic conservation laws
\eqref{ei} to obtain
$$\left\langle\!\left\langle q
\right\rangle\!\right\rangle=0 ,\quad\left\langle\!\left\langle
\xi
q\right\rangle\!\right\rangle=0,\quad\left\langle\!\!\!\left\langle
\f{1}{2}|\xi|^2q\right\rangle\!\!\!\right\rangle=0.$$

Since these $\eta$ are also in $L^2(M d\xi)$, it then follows from
the limiting Vlasov-Maxwell-Boltzmann equation \eqref{4m003} that
$g$ satisfies the local conservation laws of mass, momentum, and
energy:
\begin{equation}\label{4m02}
\Dv_x\left<\xi g\right>=0,\qquad\Dv_x\left<\xi\otimes\xi
g\right>=0,\qquad \Dv_x\left<\xi\f{1}{2}|\xi|^2g\right>=0.
\end{equation}

Theorem \ref{bgl} states that $g$ has the form of the
infinitesimal Maxwellian
$$g=h+\u\cdot\xi+\t\left(\f{1}{2}|\xi|^2-\f{3}{2}\right).$$
Substituting this into \eqref{4m02}, the local mass and energy
conservation laws yield the incompressibility relation for the
velocity field $\u$ while that of momentum yields the Boussinesq
relation between $h$ and $\t$:
$$\Dv_x\u=0,\qquad\nabla_x(h+\t)=0.$$

\subsection{Proof of Convergence to Incompressible
Electron-Magnetohydrodynamic-Fourier Equations}

Throughout this subsection, it is assumed that the bump function
$\gamma$ belongs to $\tilde{\Upsilon}$ (defined by \eqref{4m1}).
Using Theorem \ref{vcd}, the classical Sobolev embedding theorems,
and the continuity of pseudo-differential operators of order 0 on
$W^{s,p}$ for $1<p<\infty$, one sees that, for all $s>0$,
\begin{equation}\label{5m1}
\begin{split}
&\partial_t P\left<\xi
g_\i^b\right>+P\nabla_x\cdot\f{1}{\i}\left<\left(\xi\otimes\xi-\f{1}{3}|\xi|^2I\right)g_\i^b\right>\\&\quad+eP\left(
B_\i\cdot\left<\xi\times\nabla_\xi\eta g_\i^b\right>\right)-\alpha
eP E_\i\\&\rightarrow 0
\end{split}
\end{equation}
in $L^1_{loc}(dt; W^{-s,1}_{loc}(\R^3))$, and
\begin{equation}\label{5m2}
\begin{split}
\partial_t\left<\left(\f{1}{5}|\xi|^2-1\right)g_\i^b\right>+\nabla_x\cdot\f{1}{\i}\left<\xi\left(\f{1}{5}|\xi|^2-1\right)g_\i^b\right>
\rightarrow 0
\end{split}
\end{equation}
in $L^1_{loc}(dtdx)$ as $\i\rightarrow 0$. Here, the operator $P$
is the Leray projection, i.e. the $L^2(dx)$-orthogonal projection
on the space of divergence-free vector fields. In \eqref{5m2}, we
used
$$\xi\times\nabla_\xi\left(\f{1}{5}|\xi|^2-1\right)=0.$$

By Theorem \ref{bgl} and Proposition \ref{ec}, pick any sequence
$\i_n\rightarrow 0$ such that
\begin{align}\label{5m3}
&g_{\i_n}^b\rightarrow g\quad\textrm{in}\quad
w^*-L^\infty_t(L^2(Md\xi dx)),\\
&\gamma_{\i_n}q_{\i_n}^b\rightarrow q\quad\textrm{in}\quad
w-L^1_{loc}(L^1(dtdx; L^1((1+|\xi|^2)d\mathcal{M})).
\end{align}
In this section, we deal exclusively with such extracted
sequences, drop the index $n$ and abuse the notations $g_\i$,
$g_\i^b$, $g_\i^c$, $q_\i$ and so on to designate the subsequences
$g_{\i_n}$, $g_{\i_n}^b$, $g_{\i_n}^c$, $q_{\i_n}$. Set $\u$ and
$\t$ the limiting fluctuations of velocity and temperature fields
defined by
\begin{align}\label{5m4}
&\left<\xi g_\i^b\right>\rightarrow \u,\quad\textrm{in}\quad
w^*-L_t^\infty(L^2_x);\\
&\left<\left(\f{1}{3}|\xi|^2-1\right)g_\i^b\right>\rightarrow\t,\quad\textrm{in}\quad
w^*-L_t^\infty(L^2_x).
\end{align}

The second entropy control in Proposition \ref{ec} implies that
$g_\i^b$ and $g_\i$ have the same limit $g$ in $w-L^1_{loc}(dtdx;
L^1(Md\xi))$; hence the Boussinesq relation and the
incompressibility condition hold:
\begin{equation}\label{5m5}
\Dv_x\u=0,\qquad \t+<g>=0.
\end{equation}

Denote by $\varsigma$ either the tensor $\Phi$ or the vector
$\Psi$. Since $L$ is self-adjoint on $L^2(Md\xi)$ so that
\begin{equation}\label{5m6}
\begin{split}
\f{1}{\i}\left<(L\varsigma)g_\i^b\right>=\f{1}{\i}\left<\varsigma
(Lg_\i^b)\right>&=\f{1}{\i}\left\langle\!\!\left\langle\varsigma(g_\i^b+g_{\i
*}^b-{g_\i^b}'-{g_{\i *}^b}')\right\rangle\!\!\right\rangle\\
&=\left\langle\!\!\!\left\langle
\varsigma\left[\f{1}{\i}(g_\i^b+g_{\i *}^b-{g_\i^b}'-{g_{\i
*}^b}')+(g_\i^b g_{\i *}^b-{g_\i^b}'{g_{\i
*}^b}')\right]\right\rangle\!\!\!\right\rangle\\
&\quad+\left<\varsigma Q(g_\i^b, g_\i^b)\right>.
\end{split}
\end{equation}
The first term on the last right hand side of \eqref{5m6}
converges to the diffusion term while the second term converges to
the convection term in the incompressible system \eqref{IM}. These
limits are analyzed in the next two lemmas. The convergence to the
diffusion term is obtained by an argument that closely follows
\cite{GL}, except that the present work should pay additional
attention to the Maxwell effect. This apparently minor difference
makes our analysis slightly more difficult than that in \cite{GL}.

\begin{Lemma} Define
\begin{equation}\label{cons}
\nu=\f{1}{10}\left<\Phi:L\Phi\right>,\qquad
\kappa=\f{2}{15}\left<\Psi\cdot L\Psi\right>.
\end{equation}
Then, as $\i\rightarrow 0$,
$$\f{1}{\i}\left<(L\Phi)g_\i^b\right>-\left<\Phi Q(g_\i^b, g_\i^b)\right>\rightarrow
-\nu(\nabla_x\u+(\nabla_x\u)^\top);$$
$$\f{1}{\i}\left<(L\Psi)g_\i^b\right>-\left<\Psi Q(g_\i^b, g_\i^b)\right>\rightarrow
-\f{5}{2}\kappa\nabla_x\t$$ in $w-L^1_{loc}(dtdx)$.
\end{Lemma}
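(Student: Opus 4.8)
The plan is to read the two limits directly off the algebraic identity \eqref{5m6}. Rearranging it, the quantity to be estimated is exactly the ``first term'' on its right-hand side; and since the bracket occurring there equals $-q_\i^b$, where
$$q_\i^b:=\f{1}{\i^2}\left({G_\i^b}'{G_{\i *}^b}'-G_\i^b G_{\i *}^b\right),\qquad G_\i^b:=1+\i g_\i^b=1+(G_\i-1)\gamma(G_\i),$$
we have, for $\varsigma$ equal to $\Phi$ or $\Psi$,
$$\f{1}{\i}\left<(L\varsigma)g_\i^b\right>-\left<\varsigma Q(g_\i^b,g_\i^b)\right>=-\langle\!\langle\varsigma q_\i^b\rangle\!\rangle.$$
So the whole task reduces to identifying the weak limit of $q_\i^b$ and then computing the $d\mathcal{M}$-moment of that limit against $\varsigma$.

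First I would show that $q_\i^b$, just like the renormalized collision integrand $\gamma(G_\i)q_\i$ of Theorem \ref{bgl} (cf.\ \eqref{b4}), is relatively compact in $w-L^1_{loc}(dtdx;L^1((1+|\xi|^2)d\mathcal{M}))$ and shares its weak limit $q$. I would carry this out exactly as in \cite{GL}: using the splitting $g_\i^b=g_\i-\i g_\i^c$ of \eqref{3m19} together with the entropy controls of Lemma \ref{ec} --- in particular $1-\gamma(G_\i)\le 4\i^2|g_\i^c|$, $(1-\gamma(G_\i))G_\i\le 5\i^2|g_\i^c|$, and $g_\i^c=O(1)$ in $L^\infty_t(L^1)$, whence $\int_\T(1-\gamma(G_\i))\,dx=O(\i^2)$ and the ``non-equilibrium'' set $\{\gamma(G_\i)\neq 1\}$ has vanishing measure --- one bounds the discrepancy $q_\i^b-\gamma(G_\i)q_\i$ by the entropy dissipation rate \eqref{m15} times factors that vanish with $\i$, and concludes. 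The electromagnetic fields do not enter this step beyond the \textit{a priori} bounds of Sections 3--4; the extra care ``for the Maxwell effect'' is needed only if one prefers to argue from the renormalized equation \eqref{4m6} for $M g_\i^b$ directly, in which case the forcing terms $e\i(\i E_\i+\xi\times B_\i)\cdot\na_\xi(M g_\i^b)$, $e\i^2 E_\i\cdot\xi\, M g_\i^b$ and $e\hat{\gamma}_\i E_\i\cdot\xi F_\i$ have to be shown $O(\i^{1/2})$ in $L^1_{loc}(\R_+\times\T)$ by combining $\|\i^{1/2}E_\i\|_{L^\infty_t(L^2)}\le C$ and $\|B_\i\|_{L^\infty_t(L^2)}\le C$ with Lemma \ref{ec}, as in the proof of Theorem \ref{vcd}.

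Next, since $\Phi$ and $\Psi$ are orthogonal to the collision invariants and belong to $L^2(M d\xi)$, hence (by assumption {\rm ({\bf{H1}})}) to $L^2(d\mathcal{M})$, and since moreover $q_\i^b$ is bounded in $L^2_{loc}(dt;L^2(d\mathcal{M}dx))$ by the entropy-dissipation identity \eqref{m13}, I would pass to the limit to obtain $\langle\!\langle\varsigma q_\i^b\rangle\!\rangle\to\langle\!\langle\varsigma q\rangle\!\rangle$ in $w-L^1_{loc}(dtdx)$, and then apply Fubini together with the limiting Vlasov--Maxwell--Boltzmann equation \eqref{4m003} (already established in Section 6.1) to write
$$\langle\!\langle\varsigma q\rangle\!\rangle=\left<\varsigma(\xi)\int_{\R^3}\int_{S^2}q\,b(\xi_*-\xi,\omega)\,d\omega M_* d\xi_*\right>=\left<\varsigma\,\xi\cdot\na_x g\right>.$$
Finally I would evaluate this moment for the infinitesimal Maxwellian $g=h+\u\cdot\xi+\t\left(\f12|\xi|^2-\f32\right)$, writing $\left<\varsigma\,\xi\cdot\na_x g\right>=\na_x\cdot\left<\varsigma\,\xi\,g\right>$ and using $\varsigma\perp N(L)$. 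For $\varsigma=\Phi$, only the $\u\cdot\xi$ component of $g$ survives and $\left<\Phi_{kl}\xi_m\xi_i\right>=\left<\Phi_{kl}A_{mi}\right>=\nu\left(\delta_{km}\delta_{li}+\delta_{ki}\delta_{lm}-\f23\delta_{kl}\delta_{mi}\right)$ with $A:=\xi\otimes\xi-\f13|\xi|^2 I=L\Phi$, the constant $\nu=\f{1}{10}\left<\Phi:L\Phi\right>$ being fixed by contracting this identity and using the self-adjointness of $L$; then $\Dv_x\u=0$ eliminates the trace terms and gives $\left<\Phi\,\xi\cdot\na_x g\right>=\nu\left(\na_x\u+(\na_x\u)^\top\right)$. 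For $\varsigma=\Psi$, parity kills $\left<\Psi_i A_{kl}\right>$, only the $\t\left(\f12|\xi|^2-\f32\right)$ component of $g$ survives, and $\xi_m\left(\f12|\xi|^2-\f32\right)=(L\Psi)_m+\xi_m$ yields $\left<\Psi_i\xi_m\left(\f12|\xi|^2-\f32\right)\right>=\left<\Psi_i(L\Psi)_m\right>=\f13\left<\Psi\cdot L\Psi\right>\delta_{im}$, so that $\left<\Psi\,\xi\cdot\na_x g\right>=\f13\left<\Psi\cdot L\Psi\right>\na_x\t=\f52\kappa\na_x\t$. Since the left-hand side of the Lemma is $-\langle\!\langle\varsigma q_\i^b\rangle\!\rangle$, this yields the two asserted limits with $\varsigma=\Phi$ and $\varsigma=\Psi$ respectively.

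The step I expect to be the main obstacle is the first one --- identifying $\lim q_\i^b$ with $q$. A priori $q_\i^b$ need not be bounded in $L^1$, and the renormalization built into $g_\i^b$ is not the one used to define $q$ in Theorem \ref{bgl}; reconciling the two requires the delicate entropy--dissipation bookkeeping of \cite{GL} on the non-equilibrium set, on which the raw fluctuation $g_\i$ is unbounded even though that set shrinks as $\i\to0$. Once that is settled, the remaining passage to the limit and the Gaussian-moment identities are routine, given \eqref{4m003}.
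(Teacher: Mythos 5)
Your proposal is correct and follows exactly the route the paper itself intends: the paper states this lemma without proof, remarking only that the convergence to the diffusion term ``closely follows \cite{GL}'', and your reconstruction --- rewriting the left-hand side as $-\langle\!\langle\varsigma q_\i^b\rangle\!\rangle$, identifying the weak limit of $q_\i^b$ with the $q$ of Theorem \ref{bgl} via the entropy controls of Lemma \ref{ec}, and then evaluating $\langle\!\langle\varsigma q\rangle\!\rangle=\left<\varsigma\,\xi\cdot\nabla_x g\right>$ through the limiting equation \eqref{4m003} and the Gaussian moment identities fixing $\nu$ and $\kappa$ --- is precisely that argument, and your computations check out. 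The one imprecision is the claim that $q_\i^b$ itself is bounded in $L^2_{loc}(dt;L^2(d\mathcal{M}dx))$: by Lemma \ref{bgl1} only the limit $q$ lies in that space, while the approximants are merely relatively compact in $w\text{-}L^1_{loc}(dtdx;L^1((1+|\xi|^2)d\mathcal{M}))$, which is nonetheless enough to pass to the limit against $\Phi$ and $\Psi$.
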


The convection term is the nonlinear part of the limiting system
and its convergence is therefore the most difficult to establish.
The analysis below rests not only on all \textit{a priori}
estimates and the arguments in \cite{GL}, but also the compactness
of the moment of $g_\i^b$ in $\xi$ which is stated in Lemma
\ref{ll12} below.

\begin{Lemma}
The following convergence hold in the sense of distributions on
$\R_+\times \R^3$:
$$P\nabla_x\cdot\left<\Phi Q(g_\i^b, g_\i^b)\right>\rightarrow
P\nabla_x\cdot(\u\otimes\u),$$
$$\nabla_x\left<\Psi Q(g_\i^b,
g_\i^b)\right>\rightarrow\f{5}{2}\nabla_x\cdot(\u\t),$$ as
$\i\rightarrow 0$.
\end{Lemma}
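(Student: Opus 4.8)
The plan is to reduce the nonlinear moments $\langle\Phi\,Q(g_\i^b,g_\i^b)\rangle$ and $\langle\Psi\,Q(g_\i^b,g_\i^b)\rangle$ to bilinear expressions in the fluid moments of $g_\i^b$, and then to pass to the limit in those expressions using the strong compactness of the incompressible part together with the Lions--Masmoudi treatment of the acoustic oscillations. I would first write $g_\i^b=\Pi g_\i^b+g_\i^{b,\perp}$, where $\Pi$ is the $L^2(Md\xi)$-orthogonal projection onto $N(L)$, so that $\Pi g_\i^b=\rho_\i+\u_\i\cdot\xi+\t_\i(\frac12|\xi|^2-\frac32)$ with $\rho_\i=\langle g_\i^b\rangle$, $\u_\i=\langle\xi g_\i^b\rangle$, $\t_\i=\langle(\frac13|\xi|^2-1)g_\i^b\rangle$. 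The entropy dissipation bound entering \eqref{m293} and the coercivity of $L$ on $N(L)^{\perp}$ force $g_\i^{b,\perp}\to0$ strongly in $L^2_{loc}(dtdx;L^2(Md\xi))$; combining this with $g_\i^b=O(1)$ in $L^\infty_t(L^2(Md\xi dx))$ from Lemma \ref{ec}, the boundedness hypothesis ({\bf H1}) (which makes the collision measure factorize the polynomial weights $\Phi,\Psi$ against an $L^2$ norm of $g_\i^b$), and a Cauchy--Schwarz estimate over $d\mathcal M$, one obtains $\langle\varsigma\,Q(g_\i^b,g_\i^b)\rangle=\langle\varsigma\,Q(\Pi g_\i^b,\Pi g_\i^b)\rangle+o(1)$ in $L^1_{loc}(\R_+\times\T)$ for $\varsigma=\Phi$ or $\Psi$. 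Since $Q(1,\cdot)=-\frac12L(\cdot)$ vanishes on $N(L)$, the expression $Q(\Pi g_\i^b,\Pi g_\i^b)$ depends only on $\u_\i$ and $\t_\i$, and the classical Gaussian moment identities (cf. \cite{BGL}) give $\langle\Phi\,Q(\Pi g_\i^b,\Pi g_\i^b)\rangle=\u_\i\otimes\u_\i-\frac13|\u_\i|^2I$ and $\langle\Psi\,Q(\Pi g_\i^b,\Pi g_\i^b)\rangle=\frac52\,\t_\i\u_\i$. The pure-trace tensor $-\frac13|\u_\i|^2I$ is annihilated by $P\nabla_x\cdot$, so it suffices to prove $P\nabla_x\cdot(\u_\i\otimes\u_\i)\to P\nabla_x\cdot(\u\otimes\u)$ and $\nabla_x\cdot(\t_\i\u_\i)\to\nabla_x\cdot(\u\t)$.

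Next I would Hodge decompose $\u_\i=P\u_\i+\nabla\phi_\i$. The incompressible part is compact: from the approximate momentum conservation law \eqref{5m1}, \eqref{4m51}, the time derivative $\partial_t P\u_\i$ is bounded in $L^1_{loc}(dt;W^{-s,q}_{loc})$ for some fixed $s,q$ --- the flux $\frac1\i\langle(\xi\otimes\xi-\frac13|\xi|^2I)g_\i^b\rangle=\frac1\i\langle(L\Phi)g_\i^b\rangle$ being $O(1)$ in $L^1_{loc}$ by the entropy--dissipation analysis of the previous lemma, the magnetic contribution bounded in $L^\infty_t(L^1_x)$ by the uniform $L^\infty_t(L^2)$ bounds on $B_\i$ and on the moments of $g_\i^b$, and the electric contribution controlled by Lemma \ref{maxl}. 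Together with the uniform $L^\infty_t(L^2_x)$ bound, Aubin--Lions--Simon and interpolation yield $P\u_\i\to\u$ strongly in $L^2_{loc}(dtdx)$. Applying the same reasoning to \eqref{5m2}, whose flux $\frac1\i\langle\xi(\frac15|\xi|^2-1)g_\i^b\rangle=\frac{2}{5\i}\langle(L\Psi)g_\i^b\rangle$ is entirely non--hydrodynamic and hence $O(1)$, shows that the slow combination $\Theta_\i:=\langle(\frac15|\xi|^2-1)g_\i^b\rangle=-\frac25\rho_\i+\frac35\t_\i$ is compact, $\Theta_\i\to\t$ strongly in $L^2_{loc}(dtdx)$. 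I then write $\t_\i=\Theta_\i+\frac25\sigma_\i$ with $\sigma_\i:=\rho_\i+\t_\i$ the acoustic scalar; note that $\sigma_\i\rightharpoonup0$ and $\nabla\phi_\i\rightharpoonup0$ weakly in $L^2_{loc}$, but neither need be compact.

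Substituting these decompositions, every product containing a compact factor passes to the limit by strong$\times$weak convergence: $P\u_\i\otimes P\u_\i\to\u\otimes\u$ produces the main term, while $P\u_\i\otimes\nabla\phi_\i$, $\nabla\phi_\i\otimes P\u_\i$, $\Theta_\i\nabla\phi_\i$, and $\sigma_\i P\u_\i$ contribute $0$, and $\Theta_\i P\u_\i\to\t\u$ (recall $\Dv\u=0$, so $P\u=\u$). The only genuinely quadratic, non--compact terms are $P\nabla_x\cdot(\nabla\phi_\i\otimes\nabla\phi_\i)$ and $\nabla_x\cdot(\sigma_\i\nabla\phi_\i)$, which I would handle by the Lions--Masmoudi compensated--compactness argument (cf. \cite{GL, ML}). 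Combining the approximate local conservation of mass and of energy from Theorem \ref{vcd} gives, to leading order, the acoustic system $\i\partial_t\sigma_\i+\frac53\Delta\phi_\i\to0$ and $\i\partial_t\nabla\phi_\i+\nabla\sigma_\i\to0$ in $\mathcal D'$; using the identity $\nabla_x\cdot(\nabla\phi\otimes\nabla\phi)=\frac12\nabla|\nabla\phi|^2+(\Delta\phi)\nabla\phi$ together with these acoustic relations, both $(\Delta\phi_\i)\nabla\phi_\i$ and $\nabla_x\cdot(\sigma_\i\nabla\phi_\i)$ reduce, modulo $o(1)$, to a gradient plus $\i\partial_t$ of a quantity bounded in $L^1_{loc}$; after applying $P$ (for the momentum term) or using that the temperature term is already a divergence, the gradient part is killed and the $\i\partial_t$--part tends to $0$ in $\mathcal D'$. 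Collecting all the pieces gives the two claimed convergences.

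The main obstacle is exactly this last step. On the periodic box the acoustic waves neither disperse nor are damped, so there is no Strichartz--type gain; one is forced to exploit the precise algebraic structure of the quadratic self--interactions --- the fact that after the Leray projection (or after taking the divergence) they are asymptotically pure gradients and total time derivatives. A secondary but non--routine technical point is making rigorous the reduction $\langle\varsigma\,Q(g_\i^b,g_\i^b)\rangle\approx\langle\varsigma\,Q(\Pi g_\i^b,\Pi g_\i^b)\rangle$, controlling the errors produced by the bump truncation $\gamma$ and by the replacement of $g_\i$ by $g_\i^b$; this uses the boundedness ({\bf H1}) of the kernel and the compactness in $\xi$ of the moments of $g_\i^b$ supplied by Lemma \ref{ll12}, without which the quadratic moments could not be estimated in $L^1_{loc}$.
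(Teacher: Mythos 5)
Your outline is correct and follows essentially the same route the paper itself relies on: the paper states this lemma without proof, deferring to the arguments of Golse--Saint-Raymond and Lions--Masmoudi together with the spatial compactness of the moments of $g_\i^b$ from Lemma \ref{ll12}, which is precisely the combination you describe (relaxation of $g_\i^b$ to its infinitesimal-Maxwellian part via the entropy dissipation and the coercivity of $L$, the Gaussian moment identities giving $\u_\i\otimes\u_\i-\f{1}{3}|\u_\i|^2I$ and $\f{5}{2}\t_\i\u_\i$, strong compactness of the incompressible and slow modes, and the compensated-compactness cancellation of the acoustic self-interactions after applying $P$ or the divergence). One minor reallocation: Lemma \ref{ll12} is what furnishes the spatial equicontinuity required for your Aubin--Lions step on $P\u_\i$ and $\Theta_\i$, not only the control of the quadratic moment reduction.
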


\bigskip

\subsection{The Lorentz Force Term}

The key result of this subsection is to deal with the convergence
of the Lorentz force term. To this end, we first state the
following compactness about the moment of $g_\i$ in $\xi$.

\begin{Lemma}\label{ll12}
Let $\gamma\in \Upsilon$ be the same as in \eqref{3m18} and the
hypothesis {\rm ({\bf H2})} hold. Then, the family $g_\i^b$ has the following
property:
for each sequence $\i_n\rightarrow 0$, each function
$\chi=\chi(\xi)$ such that $\f{|\chi(\xi)|}{1+|\xi|^2}\rightarrow
0$ as $|\xi|\rightarrow \infty$, each $T>0$, there exists a
function $\eta: \R_+\mapsto \R_+$ such that $\lim_{z\rightarrow
0^+}\eta(z)=0$ and
\begin{equation*}
\int_0^T\int_{\T}\left|\left<g_{\i_n}^b\chi\right>(t,x+y)-\left<g_{\i_n}^b\chi\right>(t,x)\right|^2dxdt\le\eta(|y|)
\end{equation*}
for each $y\in\R^3$ such that $|y|\le 1$, uniformly in $n$.
\end{Lemma}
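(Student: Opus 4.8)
The plan is to establish the spatial equicontinuity of the moments $\langle g_\i^b\chi\rangle$ in $L^2_{t,x}$ by controlling their time derivatives through the renormalized Vlasov-Maxwell-Boltzmann equation, in the spirit of the velocity-averaging-based compactness arguments of \cite{GL,BGL}, but taking care of the extra Lorentz-force contributions. First, I would reduce to the case where $\chi$ is smooth and compactly supported in $\xi$: hypothesis {\rm ({\bf H2})} together with the entropy control $g_\i^b = O(1)$ in $L^\infty_t(L^2(Md\xi\,dx))$ (Lemma \ref{ec}) gives uniform smallness of the tail $\int_{|\xi|\ge R}|g_\i^b\chi|\,Md\xi$ as $R\to\infty$, uniformly in $n$, since $|\chi(\xi)|/(1+|\xi|^2)\to 0$; so the tail contributes a term that is small uniformly in $|y|$ and $n$, and can be absorbed into $\eta$.

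For the truncated moment, write $m_\i = \langle g_\i^b\chi_R\rangle$ where $\chi_R$ is a smooth compactly supported truncation of $\chi$. Using the renormalized equation \eqref{4m6} tested against $\chi_R(\xi)$ (exactly as in the derivation of \eqref{4m9}), I obtain
\begin{equation*}
\partial_t m_\i = -\f{1}{\i}\nabla_x\cdot\left<\xi\chi_R g_\i^b\right> + (\text{Lorentz terms}) + \f{1}{\i}\ll q_\i\hat\gamma_\i\chi_R\gg + (\text{electric field terms}).
\end{equation*}
The term $\f{1}{\i}\nabla_x\cdot\langle\xi\chi_R g_\i^b\rangle$ is $O(1/\i)$ but is a pure spatial divergence of an $L^\infty_t(L^2_x)$-bounded quantity; the Lorentz term $eB_\i\cdot\langle\xi\times\nabla_\xi\chi_R\, g_\i^b\rangle$ is bounded in $L^\infty_t(L^1_x)$ using $\|B_\i\|_{L^\infty_t(L^2)}\le C$ and Cauchy-Schwarz; the electric-field terms are $O(\sqrt\i)$ in $L^1_{loc}$ by the estimates already carried out in the proof of Theorem \ref{vcd} (the bounds \eqref{4m71}, \eqref{4m7}, \eqref{4m711}); and $\f{1}{\i}\ll q_\i\hat\gamma_\i\chi_R\gg\to 0$ in $L^1_{loc}$ by \eqref{4m8}. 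Thus $\partial_t m_\i$ is bounded, uniformly in $n$, in $L^1_{loc}(dt; W^{-1,1}_{loc}(\R^3)) + \f1\i(\text{spatial derivative of an }L^2\text{-bounded term})$. Applying the translation operator $\tau_y$ and differencing, $\partial_t(\tau_y m_\i - m_\i)$ enjoys the same structure with an extra factor that is small in $|y|$ on the low-frequency part; combined with the uniform $L^\infty_t(L^2_x)$ bound on $m_\i$ itself, a standard interpolation/duality argument (as in Lemma 4.6 and the averaging lemmas of \cite{GL}) converts control of $m_\i$ and $\partial_t m_\i$ into the desired $L^2_{t,x}$ spatial equicontinuity modulus $\eta(|y|)$.

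The main obstacle I expect is the singular factor $1/\i$ multiplying $\nabla_x\cdot\langle\xi\chi_R g_\i^b\rangle$: the time derivative of $m_\i$ is genuinely large, so one cannot directly conclude equicontinuity from an Aubin-Lions-type compactness. The resolution, following \cite{GL}, is that this singular term is a \emph{spatial derivative}, so after testing against a spatially-translated test function and integrating by parts in $x$, the $1/\i$ is paired with a difference $\tau_y\varphi - \varphi$ that is $O(|y|)$ in the relevant norm; more precisely one localizes in Fourier space, treating high frequencies by the uniform $L^2$ bound and low frequencies by the time-derivative estimate, and optimizes the frequency cutoff against $\i$ and $|y|$. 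Verifying that hypothesis {\rm ({\bf H2})} is exactly what is needed to make the tail truncation uniform — and that the Lorentz and Maxwell terms do not destroy this balance — is the technical heart of the argument, but each individual estimate is already available from Lemma \ref{ec}, Theorem \ref{bgl}, and the computations in the proof of Theorem \ref{vcd}.
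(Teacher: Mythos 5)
There is a genuine gap in your proposal: you try to extract \emph{spatial} equicontinuity of $m_\i=\left<g_\i^b\chi\right>$ from the macroscopic moment equation, i.e.\ from a bound on $m_\i$ in $L^\infty_t(L^2_x)$ together with a structured bound on $\partial_t m_\i$. That combination can only yield compactness in \emph{time} (modulo the fast oscillations), never regularity in $x$: in your own Fourier-localization step, the high-frequency part of $\tau_y m_\i-m_\i$ is estimated by ``the uniform $L^2$ bound,'' but a uniform $L^2$ bound gives $\|(\tau_y-1)P_{>\Lambda}m_\i\|_{L^2}\le 2\|P_{>\Lambda}m_\i\|_{L^2}$, which is not small unless one already has some positive spatial regularity, i.e.\ decay of high spatial frequencies. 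Nothing in your argument produces that decay, so the optimization of the cutoff $\Lambda$ against $\i$ and $|y|$ cannot close. The time-derivative estimate (and the observation that the singular $\f{1}{\i}\nabla_x\cdot$ term pairs with $\tau_y\varphi-\varphi=O(|y|)$) only helps on the low-frequency block, which was already harmless.

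The missing ingredient is the velocity averaging lemma applied at the \emph{kinetic} level, and this is exactly how the paper proceeds. One writes the renormalized equation for $g_\i^b$ in the form $(\i\partial_t+\xi\cdot\nabla_x)g_\i^b=A_\i-e\Dv_\xi\bigl(\i(\i E_\i+\xi\times B_\i)g_\i^b\bigr)+e\hat\gamma_\i\,\i E_\i\cdot\xi\,G_\i$, truncates $g_\i^b$ at a level $L$, and splits the collision source $A_\i$ into the part where $|A_\i|>N$ (handled in $L^1$ by the Duhamel representation along characteristics, small for $N$ large by the weak $L^1$ equi-integrability of $q_\i$ — note $q_\i$ is \emph{not} bounded in $L^2$, so this splitting is unavoidable) and the part where $|A_\i|\le N$, which together with the Lorentz/electric terms lies in $L^2(dt\,dx\,Md\xi)+L^2(dt\,dx;H^{-1}(d\xi))$. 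The averaging theorem of \cite{DL1,GLPS} then gives $\left<\hat{g_\i^b}\chi\right>$ a uniform bound in $L^2_t(H^{1/4}_x)$, and it is this genuine fractional spatial regularity that produces the modulus $\eta(|y|)$; hypothesis ({\bf H2}) is then used, as in your tail reduction, to remove the $L$-truncation. Your preliminary reduction via ({\bf H2}) and your inventory of bounds on the Lorentz, electric-field, and collision terms are all sound and are indeed used in the paper, but without the $L\wedge$/$N$-splitting and the kinetic averaging lemma the central equicontinuity claim is not established.
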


\begin{proof}
For any $\gamma\in \Upsilon$, since $F_\i$ is a renormalized
solution of \eqref{me31} relatively to $M$, using the nonlinear
function $\Gamma(z)=(z-1)\gamma(z)$ in the renormalized
formulation \eqref{m294}, we obtain

\begin{equation}\label{ll210}
\begin{split}
(\i\partial_t+\xi\cdot\nabla_x)g_\i^b&=\int_{\R^3}\int_{S^2}{q_\i}
\hat{\gamma}_\i b d\omega M_*d\xi_*-e\Dv_\xi\left(\i(\i
E_\i+\xi\times B_\i)g^b_\i\right)\\
&\quad+e\hat{\gamma}_\i\i E_\i\cdot\xi G_\i,
\end{split}
\end{equation}
with $\hat{\gamma}$ defined in terms of the truncation $\gamma$ by
\eqref{4m2}. Denoting
\begin{equation*}
f\wedge L=
\begin{cases}
f,\quad\textrm{if}\quad |f|\le L;\\
L,\quad\textrm{if}\quad f\ge L;\\
-L,\quad\textrm{if}\quad f\le -L
\end{cases}
\end{equation*}
for every $L>1$, we deduce from \eqref{ll210} that
\begin{equation}\label{ll21}
\begin{split}
(\i\partial_t+\xi\cdot\nabla_x)(g_\i^b\wedge L)
&=\left(\int_{\R^3}\int_{S^2}{q_\i} \hat{\gamma}_\i b d\omega
M_*d\xi_*\right)1_{\{|g_\i^b|\le L\}}
\\&\quad-e\Dv_\xi\left(\i(\i
E_\i+\xi\times B_\i)\left(g^b_\i\wedge L\right)\right)\\
&\quad+e\hat{\gamma}_\i\i E_\i\cdot\xi G_\i 1_{\{|g_\i^b|\le L\}},
\end{split}
\end{equation}
Furthermore, for every $N>1$, we decompose $g_\i^b\wedge L$ as
$$g_\i^b\wedge L=\ov{g_\i^b}+\hat{g_\i^b},\qquad {\ov{g_\i^b}}^{0}=0,$$
with
\begin{equation}\label{1125}
\begin{split}
(\i\partial_t+\xi\cdot\nabla_x)\ov{g_\i^b}&=\left(\int_{\R^3}\int_{S^2}{q_\i}
\hat{\gamma}_\i b d\omega M_*d\xi_*\right)1_{\{|g_\i^b|\le
L\}}1_{\{|A_\i|>N\}},
\end{split}
\end{equation}
and
\begin{equation}\label{11251}
\begin{split}
(\i\partial_t+\xi\cdot\nabla_x)\hat{g_\i^b}&=\left(\int_{\R^3}\int_{S^2}{q_\i}
\hat{\gamma}_\i b d\omega M_*d\xi_*\right)1_{\{|g_\i^b|\le
L\}}1_{\{|A_\i|\le N\}}\\&\quad-e\Dv_\xi\left(\i(\i
E_\i+\xi\times B_\i)\left(g^b_\i\wedge L\right)\right)\\
&\quad+e\hat{\gamma}_\i\i E_\i\cdot\xi G_\i 1_{\{|g_\i^b|\le L\}},
\end{split}
\end{equation}
where
$$A_\i=\int_{\R^3}\int_{S^2}{q_\i}
\hat{\gamma}_\i b d\omega M_*d\xi_*.$$

{\bf{Step 1: Control of $\ov{g_\i^b}$.}} From \eqref{1125}, if we
denote
$$S_\i=\left(\int_{\R^3}\int_{S^2}{q_\i}
\hat{\gamma}_\i b d\omega
M_*d\xi_*\right)1_{\{|A_\i|>N\}}1_{\{|g_\i^b|\le L\}},$$ then we
obtain
\begin{equation}\label{11253}
\begin{split}
\ov{g_\i^b}(t,x,\xi)=\int_0^{\f{t}{\i}}S_\i(t-\i s, x-s\xi,
\xi)ds.
\end{split}
\end{equation}

Notice that, since $\left|\hat{\gamma}_\i\right|\le 9$ and $q_\i$
is weakly compact in $L^1(dtdxd\mathcal{M})$, $S_\i$ is uniformly
bounded in $L^1(dtdxMd\xi)$. Therefore,
\begin{equation}
\left\|\ov{g_\i^b}(t,x,\xi)\right\|_{L^1(dtdxMd\xi)}\le
\left\|S_\i\right\|_{L^\infty_t(L^1(dxd\mathcal{M}))}.
\end{equation}

{\bf{Step 2: Compactness of $\hat{g_\i^b}$.}} Setting
\begin{equation*}
\begin{split}
\hat{\mathcal{S}}_\i&=\left(\int_{\R^3}\int_{S^2}{q_\i}
\hat{\gamma}_\i b d\omega M_*d\xi_*\right)1_{\{|A_\i|\le
N\}}1_{\left\{\left|\hat{g_\i^b}\right|\le L\right\}}
\\&\quad-e\Dv_\xi\left(\i(\i
E_\i+\xi\times B_\i)\left(g^b_\i\wedge L\right)\right)\\
&\quad+e\hat{\gamma}_\i\i E_\i\cdot\xi G_\i
\end{split}
\end{equation*}

Notice that $\left|\hat{\gamma}_\i G_\i\right|\le \f{27}{2}$, and
hence, by the interpolation between $L^1$ and $L^\infty$, we have
$$\left(\int_{\R^3}\int_{S^2}{q_\i} \hat{\gamma}_\i
b d\omega M_*d\xi_*\right)1_{\{|A_\i|\le
N\}}1_{\left\{\left|\hat{g_\i^b}\right|\le
L\right\}}+e\hat{\gamma}_\i\i E_\i\cdot\xi G_\i\in L^2(dtdxMdx)$$
and $$\Dv_\xi\left(\i(\i E_\i+\xi\times B_\i)\left(g^b_\i\wedge
L\right)\right)\in L^2(dtdx; H^{-1}(d\xi)).$$

Thus, from \eqref{11251}, we obtain
\begin{equation}\label{11254}
(\i\partial_t+\xi\cdot\nabla_x)\hat{g_\i^b}=\hat{\mathcal{S}}_\i\in
L^2(dtdxMdx)+L^2(dtdx; H^{-1}(d\xi).
\end{equation}
Applying the averaging theorem in \cite{DL1, GLPS}, we deduce from
\eqref{11254} that, for all $\chi(\xi)$ such that
$\f{\chi(\xi)}{1+|\xi|^2}\rightarrow 0$ as $|\xi|\rightarrow
\infty$,
\begin{equation}\label{11255}
\left\|\left<\hat{g_\i^b}\chi\right>\right\|_{L^2(0,T;
H^{\f{1}{4}}(\T))}\le C_{N,L},
\end{equation}
where $C_{N, L}$ depends only on $N, L$. This yields the
compactness of $<\hat{g_\i^b}\chi>$ in space; namely, there exists
a function $\eta: \R_+\mapsto \R_+$ such that $\lim_{z\rightarrow
0^+}\eta(z)=0$
\begin{equation}\label{11256}
\left\|\left<\hat{g_\i^b}\chi\right>(t, \cdot
+y)-\left<\hat{g_\i^b}\chi\right>(t,\cdot)\right\|_{L^2((0,T)\times\T)}\le\eta(|y|).
\end{equation}

{\bf{Step 3: Compactness of $g_\i^b\wedge L$.}} From \eqref{11254}
and the weak compactness of $q_\i$ in $L^1(dtdxd\mathcal{M})$, we
have, for large enough $N$,
$\left\|\ov{g_\i^b}(t,x,\xi)\right\|_{L^1(dtdxMd\xi)}$ can be as
small as we like. Thus, by \eqref{11256},
there exists a function $\eta: \R_+\mapsto \R_+$ with
$\lim_{z\rightarrow 0^+}\eta(z)=0$,
\begin{equation}\label{112561}
\left\|\left<\left(g_\i^b\wedge L\right)\chi\right>(t, \cdot
+y)-\left<\left(g_\i^b\wedge
L\right)\chi\right>(t,\cdot)\right\|_{L^1((0,T)\times\T)}\le\eta(|y|).
\end{equation}
Using the hypothesis that
$\left\{\left(g_\i^b\right)^2\right\}_{\{\i>0\}}$ is relatively
compact in $w-L^1(dt(1+|\xi|^2)Md\xi dx)$, we deduce easily that
there exists a function $\eta: \R_+\mapsto \R_+$ with
$\lim_{z\rightarrow 0^+}\eta(z)=0$,
\begin{equation}\label{1125611}
\left\|\left<\left(g_\i^b\wedge L\right)\chi\right>(t, \cdot
+y)-\left<\left(g_\i^b\wedge
L\right)\chi\right>(t,\cdot)\right\|_{L^2((0,T)\times\T)}\le\eta(|y|).
\end{equation}

{\bf{Step 4: Compactness of $g_\i^b$.}} Due to the hypothesis that
$\left\{\left(g_\i^b\right)^2\right\}_{\{\i>0\}}$ is relatively
compact in $w-L^1(dt(1+|\xi|^2)Md\xi dx)$, for every $\beta>0$,
there exists an integer $L>1$ such that
\begin{equation*}
\left\|\left<\left(g_\i^b\wedge L\right)\chi\right>(t, \cdot
)-\left<\left(g_\i^b
\right)\chi\right>(t,\cdot)\right\|_{L^2((0,T)\times\T)}\le
C\beta,
\end{equation*}
uniformly in $\i$. Thus, for such $\beta$ and $L$, we have
\begin{equation*}
\left\|\left<\left(g_\i^b\wedge L\right)\chi\right>(t, \cdot
+y)-\left<g_\i^b\chi\right>(t,\cdot+y)\right\|_{L^2((0,T)\times\T)}\le
C\beta,
\end{equation*}
and
\begin{equation*}
\left\|\left<\left(g_\i^b\wedge L\right)\chi\right>(t, \cdot
)-\left<g_\i^b\chi\right>(t,\cdot)\right\|_{L^2((0,T)\times\T)}\le
C\beta,
\end{equation*}
uniformly in $\i$. Hence, the above two inequalities, combining
together with \eqref{1125611}, imply there exists a function
$\eta: \R_+\mapsto \R_+$ such that $\lim_{z\rightarrow
0^+}\eta(z)=0$ and
\begin{equation*}
\int_0^T\int_{\T}\left|\left<g_{\i_n}^b\chi\right>(t,x+y)-\left<g_{\i_n}^b\chi\right>(t,x)\right|^2dxdt\le\eta(|y|)
\end{equation*}
for each $y\in\R^3$ such that $|y|\le 1$, uniformly in $n$.
\end{proof}

Now, we are ready to prove the convergence of the term of Lorentz
force.

\begin{Lemma}\label{lfc}
The following convergence holds in the sense of
distributions on $\R_+\times \R^3$:
$$B_\i\cdot\left<\xi\times\nabla_\xi\xi_k
g_\i^b\right>\rightarrow (B\times(\nabla\times B))_k,$$ as
$\i\rightarrow 0$, for all $1\le k\le 3$. The notation $a_k$
stands for the i-th component of the vector $a$. Further, we have
$j=e\u$.
\end{Lemma}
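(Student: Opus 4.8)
The plan is to rewrite the Lorentz moment as a cross product, isolate the one factor that can be made strongly convergent, and then pass to the limit. First, since $\nabla_\xi\xi_k$ is the constant $k$-th coordinate vector, $\langle\xi\times\nabla_\xi\xi_k\,g_\i^b\rangle=\langle\xi g_\i^b\rangle\times\nabla_\xi\xi_k$, so the scalar triple product identity gives
\[
B_\i\cdot\langle\xi\times\nabla_\xi\xi_k\,g_\i^b\rangle=\bigl(B_\i\times\langle\xi g_\i^b\rangle\bigr)\cdot\nabla_\xi\xi_k=\bigl(B_\i\times\langle\xi g_\i^b\rangle\bigr)_k .
\]
Writing $u_\i:=\langle\xi g_\i^b\rangle$, it then suffices to pass to the limit in $B_\i\times u_\i$. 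By Theorem \ref{bgl} and Lemma \ref{ec}, $g_\i^b$ and $g_\i$ have the common limit $g$, hence $u_\i\rightharpoonup\langle\xi g\rangle=\u$ weakly$^*$ in $L^\infty_t(L^2(dx))$; moreover the Maxwell analysis of Section 4 (see \eqref{max3}) already identifies this limit with the current, $\nabla\times B=j$, and since $j_\i/\i=e\langle\xi g_\i\rangle$ this gives $j=e\u$, the last assertion of the lemma. The genuine difficulty is that $B_\i$ and $u_\i$ are both only weakly$^*$ convergent in $L^\infty_t(L^2(dx))$, so $B_\i\times u_\i$ cannot be passed to the limit naively; and the classical Murat--Tartar div--curl lemma does not apply directly, as it treats scalar products of fields with complementary differential constraints, whereas here we need the vector product and have only divergence-type information on both factors.

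To overcome this I would establish strong $L^2_{loc}$ compactness of the Leray-solenoidal part $Pu_\i$ ($P$ the Leray projection on $\T$). Decompose $u_\i=Pu_\i+\nabla_x\varphi_\i$ with $\Delta_x\varphi_\i=\Dv_x u_\i$. Lemma \ref{ll12} applied with $\chi(\xi)=\xi_k$ (admissible, since $|\xi_k|/(1+|\xi|^2)\to0$) furnishes a modulus of continuity for the $x$-translates of $u_\i$ in $L^2((0,T)\times\T)$ that is uniform in $\i$; as $P$ is a translation-invariant multiplier of order zero, $Pu_\i$ inherits it. For the time regularity I would invoke the Leray-projected momentum balance \eqref{5m1}: there the singular acoustic flux $\i^{-1}\langle(\xi\otimes\xi-\f{1}{3}|\xi|^2I)g_\i^b\rangle$ is annihilated by $P$, while the remaining flux terms are the diffusion and convection contributions of the two preceding lemmas (bounded and convergent), and the solenoidal electric part $PE_\i$ is, by the Maxwell equations $\Dv_x(PE_\i)=0$, $\nabla_x\times(PE_\i)=-\partial_t B_\i$, controlled with one extra $x$-derivative; this yields a uniform bound on $\partial_t Pu_\i$ adequate for an Aubin--Lions--Simon argument, which together with the uniform $L^\infty_t(L^2(dx))$ bound and the uniform $x$-equicontinuity of $Pu_\i$ gives $Pu_\i\to\u$ strongly in $L^2_{loc}(dtdx)$ along the fixed subsequence (using $P\u=\u$ from the incompressibility relation obtained above).

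It remains to discard the gradient part. By Theorem \ref{vcd} with $\eta=1$ one has $\Dv_x u_\i=-\i\,\partial_t\langle g_\i^b\rangle+o(\i)$, hence $\varphi_\i=-\i\,\partial_t\Delta_x^{-1}\langle g_\i^b\rangle+o(\i)\to0$ strongly in a negative-order space, $\Delta_x^{-1}\langle g_\i^b\rangle$ being bounded in $L^\infty_t(H^2(dx))$. Testing $B_\i\times\nabla_x\varphi_\i$ against a smooth field $\phi$ and integrating by parts over $\T$ moves $\nabla_x$ onto $\phi$ and onto $B_\i$; using $\nabla_x\times B_\i=j_\i/\i+\i\partial_tE_\i$ together with the uniform $L^\infty_t(L^2(dx))$ bounds on $B_\i$ and $j_\i/\i$ and the smallness of $\varphi_\i$, every resulting term vanishes, so $B_\i\times\nabla_x\varphi_\i\to0$ in $\mathcal{D}'$. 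Hence $B_\i\times u_\i=B_\i\times Pu_\i+o(1)$, and since $Pu_\i\to\u$ strongly in $L^2_{loc}(dtdx)$ while $B_\i\rightharpoonup B$ weakly$^*$ in $L^\infty_t(L^2(dx))$, the strong--weak product passes to the limit: $B_\i\times u_\i\to B\times\u$ in $\mathcal{D}'$. Recalling $\nabla\times B=j=e\u$ this is exactly the asserted convergence $B_\i\cdot\langle\xi\times\nabla_\xi\xi_k\,g_\i^b\rangle\to(B\times(\nabla\times B))_k$ (with the charge constant appearing as in the momentum law \eqref{4m51}), and $j=e\u$.

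The main obstacle is the compactness of the second paragraph. The field $u_\i$ is genuinely non-compact in time: on the periodic box the momentum flux carries the factor $\i^{-1}$, so $u_\i$ retains persistent acoustic oscillations of frequency $\sim\i^{-1}$, and only $Pu_\i$ evolves slowly; even for $Pu_\i$ the required time regularity is not available a priori and must be pulled out of the projected momentum balance, which forces one to feed in the already-established convergence of the diffusion and convection terms and, more delicately, to cope with the low regularity of $E_\i$ — only the Maxwell structure ($\nabla_x\times PE_\i=-\partial_t B_\i$, $\Dv_x PE_\i=0$) makes the electric contribution to $\partial_t Pu_\i$ harmless. A secondary, periodic-specific difficulty is the elimination of the leftover gradient $\nabla_x\varphi_\i$ inside the Lorentz term, which is handled by the integration by parts of the third paragraph together with $\Dv B_\i=0$.
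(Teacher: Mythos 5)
Your reduction to $(B_\i\times\langle\xi g_\i^b\rangle)_k$ and your identification $j=e\u$ match the paper, but the compactness step in your second paragraph has a genuine gap. To run Aubin--Lions--Simon on $P\langle\xi g_\i^b\rangle$ you need $\partial_t P\langle\xi g_\i^b\rangle$ bounded in $L^1_{loc}(dt;X)$ for some spatial Banach space $X$, and the obstruction is precisely the term $\alpha e PE_\i$ in \eqref{5m1}. Your justification --- ``$\nabla_x\times(PE_\i)=-\partial_t B_\i$, controlled with one extra $x$-derivative'' --- is circular: $\partial_t B_\i$ is itself only a distributional time derivative of an $L^\infty_t(L^2)$-bounded sequence, which is \emph{not} an $L^1_t(X)$ bound; equivalently, $PE_\i=\partial_t(\nabla\times\Delta^{-1}B_\i)$ is a pure time derivative of a quantity that is bounded but not strongly convergent and not small, so it destroys, rather than supplies, time equicontinuity of $P\langle\xi g_\i^b\rangle$. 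The only way to salvage an $L^1_t$-in-time bound on $E_\i$ is through the representation \eqref{4m52}, which exhibits $E_\i$ as $\partial_t$ of an $O(\sqrt{\i})$ corrector $\i E_\i\times B_\i$ plus terms bounded in $L^\infty_t(W^{-1,1}+L^1)$; one must then absorb that corrector into the time derivative before invoking any compactness criterion. None of this is in your sketch. (A smaller slip: $P$ does not annihilate the singular flux $\i^{-1}\langle(\xi\otimes\xi-\f{1}{3}|\xi|^2I)g_\i^b\rangle$ --- only the isotropic part $\i^{-1}\langle\f{1}{3}|\xi|^2 g_\i^b\rangle I$ is a gradient; the traceless part is exactly the $\f{1}{\i}\langle(L\Phi)g_\i^b\rangle$ term that produces diffusion and convection, and it is only weakly compact in $L^1_{loc}$.)

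The paper avoids all of this by never seeking strong compactness of either factor. It applies Lemma 5.1 of Lions \cite{PLL} directly to the product $B_\i\cdot(\langle\xi g_\i^b\rangle\times e_k)$: one factor, $B_\i$, is equicontinuous in time with values in a negative Sobolev space via Faraday's law $\partial_t B_\i=-\nabla\times E_\i$; the other factor, $\langle\xi g_\i^b\rangle$, is equicontinuous under spatial translations in $L^2((0,T)\times\T)$ by Lemma \ref{ll12} with $\chi(\xi)=\xi_k$; the product then converges in $\mathcal{D}'$ to the product of the weak limits. This requires no Leray decomposition of the current, no elimination of the gradient part $\nabla_x\varphi_\i$, and no use of the projected momentum balance \eqref{5m1} at all --- so the delicate $E_\i$ contribution never enters. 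If you want to keep your route, you must both repair the $PE_\i$ step as indicated and carry out the third-paragraph estimates on $B_\i\times\nabla_x\varphi_\i$ with quantified norms (the $\i\partial_t E_\i$ term in Amp\`ere's law needs the same $\sqrt{\i}$-weighted treatment); but the paper's argument shows the heavy machinery is unnecessary here.
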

\begin{proof}
For any $1\le k\le 3$, $\nabla_\xi \xi_k=e_k$, where
$\{e_k\}_{k=1}^3$ is the standard basis for $\R^3$. This implies
$$\left<\xi\times\nabla_\xi\xi_k
g_\i^b\right>=\left<\xi g_\i^b\right>\times e_k.$$ Then, we can
rewrite $ B_\i\cdot<\xi\times\nabla_\xi\xi_k g_\i^b>$ as
\begin{equation}\label{ll31}
\begin{split}
B_\i\cdot\left<\xi\times\nabla_\xi\xi_k
g_\i^b\right>&=B_\i\cdot\left(\left<\xi g_\i^b\right>\times
e_k\right).
\end{split}
\end{equation}

Defining $$j_\i^b=e\f{\left<\xi(1+\i
g_\i^b)\right>}{\i}=e\left<\xi g_\i^b\right>,$$ since
$\left<\xi\right>=0$. Then, we have
\begin{equation}\label{ll310}
\left\|j_\i^b-\f{j_\i}{\i}\right\|_{L^\infty_t(L^1(dxMd\xi))}\rightarrow 0,
\end{equation}
as $\i\rightarrow 0$. Indeed, from the definition of $g_\i^c$, we
know that $\i g_\i^c$ is uniformly bounded in
$L^\infty_t(L^2(dxMd\xi))$ while from the second statement of
Lemma \ref{ec}, $g_\i^c$ is uniformly bounded in
$L^\infty_t(L^1(dxMd\xi))$. Thus, by the interpolation between
$L^2$ and $L^1$, we deduce that
$$\|\i g_\i^c\|_{L^\infty_t(L^{\f{3}{2}}(dx Md\xi))}\le
C\i^{\f{1}{2}},$$ for some constant $C>0$. Therefore, we have
\begin{equation*}
\begin{split}
\left\|j_\i^b-\f{j_\i}{\i}\right\|_{L^\infty_t(L^1(dxMd\xi))}&=\left\|g_\i^b\xi-g_\i \xi\right\|_{L^\infty_t(L^1(dxMd\xi))}\\
&=\left\|\i g_\i^c\xi\right\|_{L^\infty_t(L^1(dxMd\xi))}\\
&\le \left\|\i
g_\i^c\right\|_{L^\infty_t(L^{\f{3}{2}}(dxMd\xi))}<|\xi|^3>^{\f{1}{3}}\\
&\le C\i^{\f{1}{2}}\rightarrow 0,
\end{split}
\end{equation*}
as $\i\rightarrow 0$. Hence, \eqref{ll310}, combining with the
weak convergence of $\left\{\f{j_i}{\i}\right\}$ in
$L^\infty_t(L^2(dxMd\xi))$ and the uniform bound of
$\{j_\i^b\}$ in $L^\infty_t(L^2(dxMd\xi))$, implies
that $j_\i^b$ converges weakly to $j$ in
$L^\infty_t(L^2(dxMd\xi))$. Note that
$\f{j_\i}{\i}=e\left<g_\i\xi\right>$, we have $j=e\u$.

Notice that, \eqref{me33} implies
$$\partial_t B_\i=-\nabla\times
E_\i\in L^\infty(0,T; W^{-4,2}(\T))\subset L^1(0,T;
W^{-s,1}(\T)),$$ for some $s>4$ large enough, and is bounded in
$L^1(0,T; W^{-s,1}(\T))$ uniformly in $\i$. On the other hand,
Lemma \ref{ll12} with $\chi(\xi)=\xi_k$ implies that for each
$T>0$,
\begin{equation}\label{lf1}
\int_0^{T}\int_\T|\left<\xi g_\i^b\right>(t,x+y)\times
e_k-\left<\xi g_\i^b\right>(t,x)\times e_k|^2dxdt\le \eta(|y|),
\end{equation}
for each $y\in \R^3$ such that $|y|\le 1$, uniformly in $\i$,
where $\eta$ is a function $\R_+\mapsto \R_+$ satisfying
$\lim_{z\rightarrow 0^+}\eta(z)=0.$ Hence, by Lemma 5.1 in
\cite{PLL}, one has
\begin{equation}\label{ll32}
(B_\i)\cdot\left(\left<\xi g_\i^b\right>\times
e_k\right)\rightarrow B\cdot (e\u\times e_k)=B\cdot (j\times
e_k)=(B\times j)_k,
\end{equation}
in the sense of distributions.
The proof is complete.
\end{proof}

\bigskip\bigskip

\section*{Acknowledgments}

D. Wang's research was supported in part by the National Science
Foundation under Grant DMS-0906160 and by the Office of Naval
Research under Grant N00014-07-1-0668.

\bigskip\bigskip

\end{document}